\newcommand{\varied}
\newcommand{\black}{\color{black}}
\newcommand{\spam}{\ensuremath{\textnormal{span}}}
\newtheorem{theorem}{Theorem}[section]
\newtheorem{lemma}[theorem]{Lemma}
\newtheorem{corollary}[theorem]{Corollary}
\newtheorem{remark}[theorem]{Remark}
\newtheorem{proposition}[theorem]{Proposition}
\newcolumntype{C}[1]{>{\centering\let\newline\\\arraybackslash\hspace{0pt}}m{#1}}
\numberwithin{equation}{section}
\begin{document}
\title{Group graded algebras and varieties with quadratic codimension growth}

\author{Wesley Quaresma Cota$^1$}
\thanks{{\it E-mail addresses:} quaresmawesley@gmail.com}

\thanks{\footnotesize $^{1}$ Partially supported by FAPESP, grant no.~2025/05699-0.}

\subjclass[2020]{Primary 16R10, 16W50, Secondary 20C30}

\keywords{ Graded algebra, polynomial identities, codimension, quadratic growth}

\dedicatory{Instituto de Matemática e Estatística, Universidade de São Paulo, São Paulo,
Brazil}

\begin{abstract}  
Let $A$ be an associative algebra graded by a finite group $G$ over a field ${F}$ of characteristic zero. One associates to $A$ the sequence of $G$-graded codimensions $c_n^G(A)$, $n=1,2,\ldots$, which measures the growth of the polynomial identities satisfied by $A$. It is known that this sequence is either polynomially bounded or grows exponentially. In this paper, we study unitary $G$-graded varieties of polynomial codimension growth. In particular, we classify the varieties generated by unitary algebras with quadratic codimension growth and show that these varieties can be described as a direct sums of algebras that generate minimal $G$-graded varieties.
\end{abstract}

\maketitle

\section{Introduction}

In recent years, the theory of polynomial identities has undergone remarkable development, with special attention devoted to the study of numerical invariants associated with a given algebra $A$. Among these invariants, the codimension sequence $c_n(A),\, n = 1, 2, \ldots,$ plays a fundamental role. By definition, $c_n(A)$ is the dimension of the space $P_n$ of multilinear polynomials in $n$ variables modulo the $T$-ideal $\mathrm{Id}(A)$ of polynomial identities of $A$. Since over a field of characteristic zero, every polynomial identity follows from finitely many multilinear ones, the codimension sequence provides a natural and powerful tool for analyzing the asymptotic behavior of the identities satisfied by $A$.

A seminal contribution in this direction is due to Regev~\cite{RG}, who showed that if $A$ is a PI-algebra, then the sequence $\{c_n(A)\}_{n\geq 1}$ is exponentially bounded. Later, Kemer \cite{Kem} showed that the codimension sequence of any PI-algebra either grows exponentially or is polynomially bounded, thus proving that intermediate growth is impossible. This dichotomy laid the foundation for a systematic study of varieties of polynomial growth. Recall that the variety generated by $A$, denoted $\mathrm{var}(A)$, is the class of all algebras $B$ such that $\mathrm{Id}(A) \subseteq \mathrm{Id}(B)$. We say that $\mathrm{var}(A)$ has polynomial growth (of the codimension sequence) if there exist constants $\alpha, t >0$ such that $c_n(A) \leq \alpha n^t$ for all $n \geq 1$.

In this context, it was proved in~\cite{Dren} that whenever $\mathrm{var}(A)$ has polynomial growth, the sequence $c_n(A)$ behaves asymptotically as  
\[
c_n(A) = q n^k + \mathcal{O}(n^{k-1}) \approx q n^k, \quad n \to \infty,
\]  
for some rational constant $q$ and positive integer $k$. Moreover, when $A$ is unitary, the leading coefficient $q$ admits explicit bounds
\[
\frac{1}{k!} \leq q \leq \sum_{j=2}^k \frac{(-1)^j}{j!} \longrightarrow \frac{1}{e},\quad n \to \infty,
\] where $e$ denotes Euler’s number (see~\cite{DrenReg}).

One of the central problems in PI-theory is the classification of varieties according to the growth rate of their codimensions. Despite the relevance of this problem, progress in this direction has been limited. In the ordinary case, unitary varieties with polynomial growth $n^k$ have been classified only for $k \leq 4$ (see~\cite{Petro, Mara3}), while in the general setting the classification is known only for varieties of at most linear growth~\cite{GLa} and for the minimal varieties with quadratic growth~\cite{Sandra}.

A further layer of complexity arises when algebras are endowed with additional structures, such as $G$-grading. Group gradings and their associated graded identities have become indispensable tools in PI-theory, since ordinary identities can be seen as a particular case of $G$-graded identities. Within this framework, it is natural to extend the notions of codimensions and polynomial growth to the graded setting. In this paper, we focus on associative PI-algebras graded by a finite group $G$, whose $G$-graded codimension sequences exhibit polynomial growth.

Compared to the ordinary case, the classification of $G$-graded varieties with polynomial growth is still in its infancy. The only general result available concerns the classification of varieties with at most linear growth (see \cite{Plamen}). An important exception arises when $G \cong \mathbb{Z}_2$: in this case, the classification of unitary $\mathbb{Z}_2$-graded varieties with quadratic growth was established in~\cite{Dafne}. More recently, in \cite{Cota2} the author completed the classification of minimal varieties with quadratic codimension growth. Recall that a $G$-graded variety $\mathrm{var}^G(A)$ is minimal of polynomial growth $n^k$ if $c_n^G(A)\approx \alpha n^k$ and, for every proper subvariety $\mathcal{U}\subsetneq \mathrm{var}^G(A)$, we have $c_n^G(\mathcal{U})\approx \beta n^t$ for some $t<k$. Minimal varieties are of particular importance, not only for their intrinsic structure, but also because they often appear as fundamental building blocks in the construction of more general varieties.

In this work, we address the classification of unitary $G$-graded varieties with quadratic codimension growth. Our main results show that, depending on the group $G$, there may exist an infinite family of such varieties. Nevertheless, this family is well structured: all varieties of this type can be explicitly described within a parametrized class of algebras. A key consequence of our classification is that every unitary $G$-graded variety with quadratic growth decomposes as a direct sum of minimal varieties, providing a precise structural description of this class.

\section{Group graded algebras}

Throughout the paper, let $F$ denote a field of characteristic zero, $G=\{g_1=1, g_2, \ldots , g_k\}$ a finite multiplicative group and $A$ an associative algebra over $F$. Recall that an algebra $A$ is called a $G$-graded (or group graded) algebra if it admits a decomposition $
A=\bigoplus_{g\in G} A_g$ into a direct sum of subspaces satisfying the condition that $A_g A_h \subseteq A_{gh}$ for all $g,h \in G$. A nonzero element $a\in A_g$ is called {homogeneous of degree} $g$.

Let $F\langle X\rangle$ denote the free associative algebra on a countable set $X=\{x_1,x_2,\ldots\}$ over $F$. We decompose $X$ as
$
X=\bigcup_{g\in G} X_{g},$ where $X_{g}=\{x_{1,g},x_{2,g},\ldots\}, \, g\in G$, is a disjoint set of variables of homogeneous degree $g$. The degree of a monomial $x_{i_1,g_{j_1}}\cdots x_{i_t,g_{j_t}}$ is given by $g_{j_1}\cdots g_{j_t}$. Denote by $\mathcal{F}_{g}$ the subspace of $F\langle X\rangle$ spanned by all monomials of homogeneous degree $g$. Hence,
\[
F\langle X\rangle=\bigoplus_{g\in G} \mathcal{F}_{g},
\]
is a $G$-grading on $F\langle X\rangle$. This $G$-graded algebra is called the free $G$-graded algebra of countable rank over $F$, denoted by $F\langle X,G\rangle$. The elements $f\in F\langle X,G\rangle$ are called $G$-graded polynomials, or simply polynomials since the context is clear.

An {admissible evaluation} of \( f \) on \( A \) is a substitution of each variable 
\( x_{i,g} \) by an element \( a \in A_g \). A polynomial \( f \in F\langle X \mid G \rangle \) is called a {$G$-graded identity} of \( A \)  if it vanishes under every admissible evaluation on \( A \). In this case, we write \( f \equiv 0 \) in \( A \). The set of all $G$-graded identities of $A$ is denoted by
\[
\textnormal{Id}^G(A)=\{f\in F\langle X,G\rangle \mid f\equiv 0 \text{ on } A\},
\]
which forms a $T_G$-ideal of $F\langle X,G\rangle$, i.e., an ideal invariant under all $G$-graded endomorphisms of the free algebra. It is well known that, in characteristic zero, $\textnormal{Id}^G(A)$ is completely determined by its multilinear elements. For every $n\geq 1$, define
\[
P_n^G=\spam_F\{\,x_{\sigma(1),g_{i_1}}\cdots x_{\sigma(n),g_{i_n}} \;\mid\; \sigma\in S_n,\; g_{i_1},\ldots,g_{i_n}\in G\,\},
\]
the space of multilinear $G$-graded polynomials of degree $n$. For $n\geq 1$, the $n$-th $G$-graded codimension of $A$ is defined as
\[
c_n^G(A)=\dim_F P_n(A),\mbox{ where }P_n(A)=\frac{P_n^G}{P_n^G\cap \operatorname{Id}^G(A)}.
\]
Note that $c_n^G(A)\leq \dim P_n^G = |G|^n n!$. It turns out that, for every $G$-graded algebra $A$, one has $c_n(A) \leq c_n^G(A)$. Moreover, if $A$ is a PI-algebra, the following upper bound holds:
\[
c_n^G(A) \leq |G|^n \, c_n(A),
\]
see \cite{GiamReg}. Therefore, for $G$-graded algebras satisfying an ordinary polynomial identity, the sequence of $G$-graded codimensions is exponentially bounded.

Denote by $\mathcal{V} = \operatorname{var}^G(A)$ the $G$-graded variety generated by $A$, that is, the class of all $G$-graded algebras $B$ such that $\operatorname{Id}^G(A) \subseteq \operatorname{Id}^G(B)$. If $\mathcal{V}$ is generated by a unitary algebra, we say that $\mathcal{V}$ is a unitary variety. We also define $c_n^G(\mathcal{V}) = c_n^G(A)$. Moreover, if $\textnormal{Id}^G(A)=\textnormal{Id}^G(B)$ then we say that $A$ and $B$ are $T_G$-equivalent and we denote $A\sim_{T_G}B$ in this case.

As a consequence of Theorem $9$ in \cite{Valenti}, the sequence of $G$-graded codimensions of $A$ either grows exponentially or is polynomially bounded. Here, we are interested in varieties of polynomial growth, that is, those for which $c_n^G(A) \leq \alpha n^t$, for some constant $\alpha$ and $t$. In this case, La Mattina \cite{La} presented the following characterization, which will be useful in this work.

\begin{theorem} \label{35}
Let $A$ be a $G$-graded algebra over a field $F$. Then $c_n^G(A)$ is polynomially bounded if and only if $A \sim_{T_G} B$, where $B = B_1 \oplus \cdots \oplus B_m,$ where $B_1, \ldots , B_m$ are finite-dimensional $G$-graded algebras over $F$ such that $\dim_F \big( B_i / J(B_i) \big) \leq 1,$ where $J(B_i)$ denotes the Jacobson radical of $B_i$, for all $i = 1, \ldots , m$.
\end{theorem}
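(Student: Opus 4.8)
The plan is to prove the two implications separately, mirroring the strategy of the ordinary (non-graded) structure theorem of Kemer and Giambruno--Zaicev \cite{Kem}.

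For the implication ``$\Leftarrow$'', suppose $A \sim_{T_G} B$ with $B = B_1 \oplus \cdots \oplus B_m$, each $B_i$ finite-dimensional and $\dim_F\big(B_i/J(B_i)\big) \le 1$. Since $\operatorname{Id}^G(B) = \bigcap_{i=1}^m \operatorname{Id}^G(B_i)$, the space $P_n^G/(P_n^G \cap \operatorname{Id}^G(B))$ embeds into the direct sum $\bigoplus_{i=1}^m P_n^G/(P_n^G \cap \operatorname{Id}^G(B_i))$, which gives the subadditivity $c_n^G(B) \le \sum_{i=1}^m c_n^G(B_i)$. It therefore suffices to bound a single block. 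Here I would apply the graded Wedderburn--Malcev decomposition $B_i = \bar{B}_i + J(B_i)$, valid for finite-group gradings in characteristic zero, where $\bar{B}_i$ is a maximal graded semisimple subalgebra. The hypothesis $\dim_F(B_i/J(B_i)) \le 1$ forces $\bar{B}_i$ to be either $0$ or $F\cdot 1$ concentrated in the identity component $e \in G$. Writing $J(B_i)^d = 0$, a nonzero multilinear graded monomial evaluated on $B_i$ can involve at most $d-1$ substitutions from $J(B_i)$, the remaining variables landing in the (at most one-dimensional) semisimple part; a standard counting of the resulting non-equivalent monomials then yields $c_n^G(B_i) \le \gamma\, n^{d-1}$ for a constant $\gamma$ depending only on $|G|$ and $\dim_F B_i$, hence polynomial growth.

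For the implication ``$\Rightarrow$'', assume $c_n^G(A)$ is polynomially bounded. By the dichotomy of Theorem $9$ in \cite{Valenti}, the variety $\operatorname{var}^G(A)$ cannot contain any $G$-graded algebra of almost polynomial growth, in particular none of the graded analogues of $UT_2$ and of the Grassmann algebra. First I would establish a finite-dimensional reduction: invoking the graded representability theory available in characteristic zero, polynomial growth implies that $\operatorname{var}^G(A)$ is generated by a finite-dimensional $G$-graded algebra, so we may replace $A$ by such an algebra. Applying the graded Wedderburn--Malcev theorem, write $A = A_{ss} + J$ with $J = J(A)$ the graded radical and $A_{ss} = A_1 \oplus \cdots \oplus A_r$ a sum of graded-simple components.

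The hard part, and the main obstacle, will be the structural analysis of this finite-dimensional model. I expect to show that polynomial growth forces every graded-simple component $A_i$ to be one-dimensional --- any higher-dimensional graded-simple algebra, such as a graded $M_2(F)$ or a nontrivial twisted group algebra, already carries exponential graded codimension growth --- and to constrain the mixed products $A_i J A_j$ for $i \ne j$, since a surviving connection would reproduce inside $A$ a graded copy of $UT_2$ and hence exponential growth, contradicting the hypothesis. Once these obstructions are excluded, I would reassemble $A$, up to $T_G$-equivalence, as a direct sum $B_1 \oplus \cdots \oplus B_m$ by attaching to each simple component (or to the purely nilpotent part) the appropriate pieces of the radical, arranged so that each summand has a semisimple quotient of dimension at most one. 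Verifying that this decomposition preserves $\operatorname{Id}^G$ and that each resulting block genuinely satisfies $\dim_F(B_i/J(B_i)) \le 1$ is the delicate step, where the combinatorics of multilinear graded identities and the classification of minimal $G$-graded varieties of exponential growth enter decisively.
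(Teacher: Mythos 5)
First, a point of reference: the paper does not prove this theorem at all --- it is quoted verbatim from La Mattina \cite{La}, so there is no in-paper argument to match. Judged on its own merits, your proposal reproduces the known strategy (and your ``$\Leftarrow$'' direction is essentially complete: subadditivity over the blocks plus the counting bound via $J(B_i)^d=0$, with the correct observation that a one-dimensional graded semisimple part is spanned by an idempotent necessarily of degree $1$). But the ``$\Rightarrow$'' direction, which is the entire content of the theorem, is left as a plan rather than a proof, and the plan has concrete holes. (i) Your ``finite-dimensional reduction'' invokes graded representability, but representability (Aljadeff--Kanel-Belov type results) applies to \emph{finitely generated} $G$-graded PI-algebras, and $A$ is arbitrary; you first need to replace $A$ by a finitely generated algebra generating the same graded variety, which is where polynomial growth actually enters (it bounds the proper/cocharacter data so that finitely many generators of the relatively free algebra suffice). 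Without that intermediate step the reduction does not go through. (ii) The claim that every graded-simple component of dimension $>1$ forces exponential graded growth is true but is not something one can dismiss with the examples of a graded $M_2(F)$ or a twisted group algebra: a twisted group algebra can be commutative with trivial ordinary codimensions, and the exponential lower bound on its \emph{graded} codimensions is precisely the deep theorem $\exp^G(B)=\dim B$ for finite-dimensional graded-simple $B$, due to Aljadeff--Giambruno \cite{AljGiam}; likewise, excluding a surviving connection $A_iJA_j\neq 0$ between distinct one-dimensional components rests on \cite[Theorem~2.2]{AljGiam}, not on an ad hoc embedding of a graded $UT_2$.

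(iii) Finally, the reassembly step you yourself flag as ``delicate'' is simply not done, and it is not automatic: given $A=A_1\oplus\cdots\oplus A_r + J$ with each $A_i\cong F$ and $A_tJA_m=0$ for $t\neq m$, one must prove the $T_G$-equivalence $A\sim_{T_G} (A_1+J)\oplus\cdots\oplus(A_r+J)\oplus J$. The standard argument evaluates a multilinear graded polynomial on homogeneous elements decomposed along the Wedderburn--Malcev decomposition and uses the orthogonality relations $A_iA_j=0$, $A_iJA_j=0$ to show that any nonzero monomial value involves the semisimple idempotent of at most one component, hence survives on a single summand $A_i+J$ (or on $J$ alone); this yields $\operatorname{Id}^G(A)=\bigcap_i\operatorname{Id}^G(A_i+J)\cap\operatorname{Id}^G(J)$. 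Note also that this equivalence is exactly the mechanism this paper later reuses in Lemma \ref{unitariornilpotentorcummu}. So: right skeleton, matching La Mattina's actual proof in outline, but as written the hard implication contains three asserted-not-proved steps, two of which require substantial external theorems that you neither prove nor cite precisely.
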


The subspaces of proper polynomials are of fundamental importance in the study of unitary $G$-graded algebras. From now on, we assume that $A$ is a $G$-graded algebra with $1$ and we fix an ordering $G=\{g_1=1, g_2, \ldots , g_k\}$ of $G$. Recall that the commutator of length $n$ is defined inductively by $[x_{1}, \ldots , x_{n}] = [[x_{1}, \ldots , x_{n-1}], x_{n}],$ where $[x_{1},x_{2}] = x_{1}x_{2} - x_{2}x_{1}$. A polynomial $f\in P_n^G$ is called a proper $G$-graded polynomial if it is a linear combination of elements of the form
\[
x_{i_1,g_2}\cdots x_{i_p,g_2}\cdots x_{j_1,g_s}\cdots x_{j_\ell,g_s}\, w_1\cdots w_m,
\]
where $w_1,\ldots,w_m$ are left-normed (long) Lie commutators in graded variables, and variables of degree $1_G=g_1$ appear only within commutators. If $A$ is unitary, then $\operatorname{Id}^G(A)$ is generated by its multilinear proper graded polynomials (see \cite{Daniela} and also 
\cite[Proposition~4.3.3]{Drensky} for the ordinary case). 

We denote by $\Gamma_n^G$ the subspace of $P_n^G$ spanned by proper graded polynomials, and set $\Gamma_0^G=\spam\{1\}$. The dimension of the space $\Gamma_n^G$ was computed in \cite[Lemma 2.1]{Daniela} as follows $$\dim \Gamma_n^G=n!\sum_{i=0}^n |G|^{n-i}\frac{(-1)^i}{i!}.$$
The sequence of {proper $G$-graded codimensions} is defined as
\[
\gamma_n^G(A)=\dim\left(\frac{\Gamma_n^G}{\Gamma_n^G\cap \textnormal{Id}^G(A)}\right), \qquad n=0,1,2,\ldots
\]

For a unitary algebra $A$, the connection between the $G$-graded codimensions and proper $G$-graded codimensions (see, for instance, \cite{DrenGiam}) is expressed by
\begin{equation} \label{codecodcentral}
 c_n^G(A)=\sum_{i=0}^n \binom{n}{i}\gamma_i^G(A), \qquad n=0,1,2,\ldots   
\end{equation}

In particular, if the sequence $\{c_n^G(A)\}_{n\geq 0}$ is polynomially bounded, then there exists an integer $t$ such that $\gamma_m^G(A)=0$ for all $m>t$. Consequently, we have the following. 

\begin{theorem} \cite{Daniela} \label{codproper}
    If $A$ is a $G$-graded algebra with $1$ with polynomial growth $n^t$ then
$$c_n^G(A) =\sum_{i=0}^t \binom{n}{i}\gamma_i^G(A)= q n^t + q_1 n^{t-1} + \cdots$$ is a polynomial with rational coefficients. 
Moreover its leading term $q=\frac{\gamma_t^G(A)}{t!}$ satisfies the inequalities $\frac{1}{t!}\leq q\leq \sum_{i=0}^t |G|^{\,t-i}\frac{(-1)^i}{i!}.$
\end{theorem}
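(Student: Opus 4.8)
The plan is to start from the fundamental relation \eqref{codecodcentral}, namely $c_n^G(A)=\sum_{i=0}^n\binom{n}{i}\gamma_i^G(A)$, and to exploit the fact already recorded in the excerpt that polynomial boundedness of $\{c_n^G(A)\}_{n\geq 0}$ forces $\gamma_m^G(A)=0$ for all $m>t$. This immediately truncates the sum to $c_n^G(A)=\sum_{i=0}^t\binom{n}{i}\gamma_i^G(A)$, where the upper limit $t$ is finite and independent of $n$; this identity is the backbone of the whole argument.

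The first genuine step is to observe that each binomial coefficient $\binom{n}{i}=\frac{n(n-1)\cdots(n-i+1)}{i!}$ is, as a function of $n$, a polynomial of degree exactly $i$ with rational coefficients. Hence $c_n^G(A)$ is a finite $\mathbb{Q}$-linear combination of such polynomials, and is therefore itself a polynomial in $n$ with rational coefficients. Collecting the top-degree contribution, which can only arise from the $i=t$ term, and using $\binom{n}{t}=\frac{n^t}{t!}+\mathcal{O}(n^{t-1})$, I read off that the leading coefficient is $q=\frac{\gamma_t^G(A)}{t!}$. Since the growth is exactly $n^t$, the polynomial has degree $t$, so $\gamma_t^G(A)\neq 0$; being the dimension of a quotient space it is a nonnegative integer, whence $\gamma_t^G(A)\geq 1$ and consequently $q\geq\frac{1}{t!}$.

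For the upper bound I would estimate $\gamma_t^G(A)$ by the dimension of the entire proper space. Since $\gamma_t^G(A)=\dim\big(\Gamma_t^G/(\Gamma_t^G\cap\operatorname{Id}^G(A))\big)$ is the dimension of a quotient of $\Gamma_t^G$, we have $\gamma_t^G(A)\leq\dim\Gamma_t^G$. Substituting the known value $\dim\Gamma_t^G=t!\sum_{i=0}^t|G|^{t-i}\frac{(-1)^i}{i!}$ and dividing by $t!$ yields $q=\frac{\gamma_t^G(A)}{t!}\leq\sum_{i=0}^t|G|^{t-i}\frac{(-1)^i}{i!}$, which is exactly the asserted bound.

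The argument is essentially a bookkeeping exercise once the truncation is in hand, so I do not anticipate a serious obstacle. The only point demanding care is the justification that the resulting polynomial has degree exactly $t$, equivalently that $\gamma_t^G(A)\geq 1$: this rests on reading the hypothesis ``polynomial growth $n^t$'' as the assertion that $t$ is precisely the largest index with $\gamma_t^G(A)\neq 0$, so that the leading coefficient of $c_n^G(A)$ does not vanish. Everything else follows from \eqref{codecodcentral}, the vanishing of the higher proper codimensions, and the explicit value of $\dim\Gamma_t^G$.
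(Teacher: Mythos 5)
Your proposal is correct and follows exactly the route the paper indicates: it truncates the identity \eqref{codecodcentral} using the vanishing $\gamma_m^G(A)=0$ for $m>t$, reads off the leading coefficient $q=\frac{\gamma_t^G(A)}{t!}$, and obtains the bounds from $1\leq \gamma_t^G(A)\leq \dim\Gamma_t^G=t!\sum_{i=0}^t |G|^{t-i}\frac{(-1)^i}{i!}$. Your care in noting that ``polynomial growth $n^t$'' must mean $t$ is the largest index with $\gamma_t^G(A)\neq 0$ (so $\gamma_t^G(A)\geq 1$) is precisely the point needed for the lower bound, and it matches the paper's reading.
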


\section{\texorpdfstring{Proper $G$-graded cocharacters}{Proper G-graded cocharacters}}

In this section we investigate the spaces of proper $G$-graded polynomials via the representation theory of the symmetric group $S_n$. We focus on their decomposition as $S_{n_1}\times \cdots \times S_{n_k}$-modules and on the corresponding proper cocharacters for a $G$-graded algebra $A$, since the multiplicities in these decompositions determine the behavior of the $G$-graded codimensions.

Recall that $G = \{g_1 = 1, g_2, \ldots, g_k\}$ is a finite group of order $k$, and consider $n = n_1 + \cdots + n_k$, a sum of $k$ nonnegative integers, denoted by $(n_1, \ldots, n_k)$ or simply by $n_1, \ldots, n_k$. Let $P_{n_1, \ldots , n_k}$ be the vector space of multilinear $G$-graded polynomials in the first $n_1$ variables of homogeneous degree $g_1=1$, the second $n_2$ variables of homogeneous degree $g_2$, and so on until the last $n_k$ variables of homogeneous degree $g_k$. Denote by $\Gamma_{n_1, \ldots , n_k}$ the subspace of $P_{n_1, \ldots , n_k}$ consisting of the proper $G$-graded polynomials. Note that the space $\Gamma_n^G$ can be decomposed as follows:
\[
\Gamma_n^G\cong  \bigoplus_{n=n_1+\cdots +n_k}  \binom{n}{n_1, \ldots, n_k}\Gamma_{n_1, \ldots , n_k},
\]
where $\binom{n}{n_1, \ldots, n_{k}}= \frac{n!}{n_1!\cdots n_{k}!}$ denotes the multinomial coefficient. 

For convenience, instead of writing the $k$-tuple $(n_1, \ldots, n_k)$, we shall denote it by $((n_{i_1})_{g_{i_1}}, \ldots, (n_{i_t})_{g_{i_t}})$, where $i_1<\cdots < i_t$ and the zero entries are omitted. For instance, if $n = n_1 + n_2 + n_3$ with $n_1 = 2$, $n_2 = 0$, and $n_3 = 1$, then the $3$-tuple $(2, 0, 1)$ will be denoted by $(2_{1}, 1_{g_3})$. In this notation, the space $\Gamma_{2,0,1}$ is denoted by $\Gamma_{(2_{1}, 1_{g_3})}.$

In the following, we present the decomposition of $\Gamma_n^G$, when $n=1$ and $n=2$, into homogeneous proper subspaces: 
\[
\Gamma_1^G\cong \bigoplus_{g\neq 1} \Gamma_{(1_g)} \quad \mbox{ and }\quad 
\Gamma_2^G \cong  \Gamma_{(2_1)}\oplus \left(\bigoplus_{g\neq 1} \Gamma_{(2_g)}\right)\oplus \left(\bigoplus_{g\neq 1} 2\Gamma_{(1_1,1_g)} \right)\oplus \left(\bigoplus_{g\neq h\neq 1} 2\Gamma_{(1_g,1_h)}\right).
\]

For a decomposition $n=n_1+\cdots + n_k$, we consider the space
\[
\Gamma_{n_1, \ldots ,n_{k}}(A)= \frac{\Gamma_{n_1, \ldots ,n_{k}}}{\Gamma_{n_1, \ldots ,n_{k}}\cap \textnormal{Id}^G(A)}
\]
and define $\gamma_{n_1, \ldots ,n_{k}}(A)=\dim_F  \Gamma_{n_1, \ldots ,n_{k}}(A)$, the proper $(n_1, \ldots , n_k)$-codimension. Consequently, we obtain
\begin{equation} \label{codprop}
	\gamma_n^{G}(A)= \sum_{ n =n_1+\cdots +n_{k} } \binom{n}{n_1,\ldots, n_{k} } \gamma_{n_1,\ldots, n_{k}}(A).
\end{equation}

Recall that there is a natural left action of $S_{n_1, \ldots , n_{k} }:= S_{n_1} \times \cdots \times S_{n_{k}}$ on $P_{n_1, \ldots , n_{k} }$, where the symmetric group $S_{n_i}$ acts by permuting the corresponding variables of homogeneous degree $g_i$, $1\leq i \leq k$. The subspace $\Gamma_{n_1, \ldots , n_{k}}$ is an $S_{n_1, \ldots , n_{k}} $-submodule of $P_{n_1, \ldots , n_{k} }$, and so we can consider its character $\chi(\Gamma_{n_1, \ldots , n_{k}})$. Since $\Gamma_{n_1, \ldots , n_{k}}\cap \textnormal{Id}^G(A)$ is invariant under this action, the space $\Gamma_{n_1, \ldots , n_{k}}(A)$ inherits a structure of left $S_{n_1, \ldots , n_{k} }$-module. We denote its character by $\pi_{n_1, \ldots , n_k}(A)$, which is called the proper $(n_1, \ldots , n_k)$-cocharacter of $A$. 

It is well known that the irreducible characters of $S_{n_1, \ldots, n_k}$ are outer tensor products of irreducible characters of symmetric groups $S_{n_i}$. Each of these irreducible $S_{n_i}$-characters corresponds uniquely to a partition $\lambda_i \vdash n_i$. Therefore, we denote by $\chi_{\lambda_1} \otimes \cdots \otimes \chi_{\lambda_{k}}$ the irreducible $S_{n_1, \ldots, n_k}$-character associated with the multipartition $(\lambda_1, \ldots, \lambda_{k}) \vdash (n_1, \ldots, n_k)$,
where each $\chi_{\lambda_i}$ is the irreducible character of $S_{n_i}$ corresponding to the partition $\lambda_i\vdash n_i$. By complete reducibility, we may decompose $\pi_{n_1, \ldots , n_k}(A)$ into irreducible characters as follows:
\begin{equation} \label{propercharac}
\pi_{n_1, \ldots ,n_k}(A)=\underset{( \lambda_1, \ldots, 
\lambda_k) \vdash (n_1, \ldots ,n_k)}{\sum} {m}_{\lambda_1, \ldots, \lambda_k} \chi_{\lambda_1} \otimes \cdots \otimes \chi_{\lambda_{k}}.    
\end{equation} The degree of the irreducible $S_{n_1, \ldots , n_{k} }$-character $\chi_{\lambda_1} \otimes \cdots \otimes \chi_{\lambda_{k}}$ is given by $d_{\lambda_1} 
\cdots d_{\lambda_k}$, where $d_{\lambda_i}$ is the degree of the irreducible character $\chi_{\lambda_i}$ given by the Hook Formula. 

There is a well-established method for computing the multiplicities in the decomposition of the proper $(n_1, \ldots ,n_k)$-cocharacters, which relies on the representation theory of $GL_m$-modules, where $GL_m$ is the general linear group. For a more detailed description, we recommend \cite[Section 12.4]{Drensky} and the paper \cite[Section 4]{Tatiana} for the case where $G\cong \mathbb{Z}_2$. 

The multiplicity $m_{\lambda_1, \ldots, \lambda_k}$ is determined by the maximal number of linearly independent proper highest weight vectors (h.w.v.s) $f_{\lambda_1, 
\ldots , \lambda_k}$ corresponding to a multipartition $(\lambda_1, \ldots, \lambda_k)$ of $(n_1, \ldots, n_k)$. The construction of such proper h.w.v.s extends in a natural way from the $\mathbb{Z}_2$-graded case treated in \cite{Tatiana}.

In order to compute the multiplicities in the decomposition \eqref{propercharac}, we only need the following result. 

\begin{proposition} \cite{Tatiana} \label{propos}
    Let $A$ be a unitary $G$-graded algebra with proper $(n_1, \ldots, n_k)$-cocharacter given in \eqref{propercharac}. Then, $m_{\lambda_1, \ldots, \lambda_k}\neq 0$ if and only if there exists a proper h.w.v. $f_{\lambda_1, \ldots, \lambda_k}$ associated to $(\lambda_1, \ldots, \lambda_k)
    \vdash (n_1, \ldots, n_k)$ such that $f_{\lambda_1, \ldots, \lambda_k}\notin \textnormal{Id}^G(A)$. Moreover, $m_{\lambda_1, \ldots, \lambda_k}$ is equal to the maximal number of proper h.w.v.s associated to $(\lambda_1, \ldots, \lambda_k)
    \vdash (n_1, \ldots, n_k)$ which are linearly independent modulo $\textnormal{Id}^G(A)$.
\end{proposition}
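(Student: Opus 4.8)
The statement is the graded analogue of the classical highest-weight-vector criterion for computing cocharacter multiplicities (see \cite[Section 12.4]{Drensky}), and the plan is to transport the $GL$-representation-theoretic machinery from the $\mathbb{Z}_2$-graded case of \cite{Tatiana} to the product group $GL_{m_1}\times\cdots\times GL_{m_k}$. First I would fix integers $m_i\geq n_i$ and replace the countable set of degree-$g_i$ variables by finite sets $\{y_{1,g_i},\ldots,y_{m_i,g_i}\}$, so that every partition $\lambda_i\vdash n_i$ occurring in \eqref{propercharac} has at most $m_i$ rows. On the free $G$-graded algebra in these variables, the group $GL_{m_i}$ acts by homogeneous linear substitutions of the degree-$g_i$ variables, and the actions for distinct indices $i$ commute, giving an action of $GL_{m_1}\times\cdots\times GL_{m_k}$.

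The first key point is that $\textnormal{Id}^G(A)$ is stable under this action: a $T_G$-ideal is closed under all $G$-graded endomorphisms of the free algebra, in particular under the homogeneous linear substitutions defining the $GL_{m_i}$-action. Hence the relevant multihomogeneous component of the relatively free algebra, and by the same reasoning its proper part, is a $GL_{m_1}\times\cdots\times GL_{m_k}$-module. By the duality between polynomial $GL_m$-representations and $S_n$-representations (Schur--Weyl), the multiplicity of the irreducible $S_{n_1, \ldots , n_{k}}$-character $\chi_{\lambda_1}\otimes\cdots\otimes\chi_{\lambda_k}$ in the proper cocharacter $\pi_{n_1,\ldots,n_k}(A)$ equals the multiplicity of the irreducible $GL_{m_1}\times\cdots\times GL_{m_k}$-submodule of highest weight $(\lambda_1,\ldots,\lambda_k)$ in the corresponding multihomogeneous proper component. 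This is the bridge that converts the abstract count of $m_{\lambda_1,\ldots,\lambda_k}$ into a count inside a concrete polynomial algebra.

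Next I would invoke the structure theory of polynomial $GL$-modules: each such module is a direct sum of irreducibles, each generated by a highest weight vector, i.e. a multihomogeneous element annihilated by the raising operators $e_{pq}^{(i)}$ ($p<q$) of every factor $\mathfrak{gl}_{m_i}$, and the multiplicity of the irreducible of highest weight $(\lambda_1,\ldots,\lambda_k)$ equals the dimension of the space of highest weight vectors of that weight. Reading this modulo $\textnormal{Id}^G(A)$, it follows that $m_{\lambda_1,\ldots,\lambda_k}$ is precisely the maximal number of proper h.w.v.'s $f_{\lambda_1,\ldots,\lambda_k}$ associated to $(\lambda_1,\ldots,\lambda_k)$ that are linearly independent modulo $\textnormal{Id}^G(A)$; in particular $m_{\lambda_1,\ldots,\lambda_k}\neq 0$ exactly when at least one such h.w.v. is nonzero on $A$, that is, $f_{\lambda_1,\ldots,\lambda_k}\notin\textnormal{Id}^G(A)$. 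Finally I would transfer this statement back to the multilinear space $\Gamma_{n_1, \ldots , n_{k}}(A)$ by complete polarization and restitution, which identify the multiplicities of $\chi_{\lambda_1}\otimes\cdots\otimes\chi_{\lambda_k}$ in the multilinear cocharacter with those appearing in the multihomogeneous picture.

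The main obstacle I anticipate is not the formal duality but its graded bookkeeping: one must verify that the subspace of proper polynomials is itself invariant under the $GL_{m_1}\times\cdots\times GL_{m_k}$-action, so that the h.w.v.'s may indeed be chosen proper, and that the long-commutator structure defining properness is compatible with the raising operators. This is exactly the point at which the $\mathbb{Z}_2$-graded construction of \cite{Tatiana} must be extended to $k$ homogeneous components, and where the explicit proper highest weight vectors $f_{\lambda_1,\ldots,\lambda_k}$ are produced. Once the invariance of the proper component and the standard highest-weight multiplicity count are established, both assertions of the proposition follow at once.
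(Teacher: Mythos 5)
The paper offers no proof of this proposition at all---it is quoted from \cite{Tatiana}, with the method only sketched via the pointers to \cite[Section 12.4]{Drensky}---and your argument is precisely the standard $GL_{m_1}\times\cdots\times GL_{m_k}$ highest-weight-vector machinery that those references employ, correctly handling the two essential verifications (invariance of $\textnormal{Id}^G(A)$ and of the proper subspace under homogeneous linear substitutions, and the Schur--Weyl transfer between the multihomogeneous and multilinear pictures). Your reconstruction is correct and follows essentially the same route as the cited source.
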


\begin{remark} \label{multibounded}
Observe that if $A$ has a proper $(n_1, \ldots, n_k)$-cocharacter as in (\ref{propercharac}) and $B$ is a unitary $G$-graded algebra with $(n_1, \ldots, n_k)$-cocharacter  
\[
\pi_{n_1, \ldots ,n_k}(B) = \sum_{(\lambda_1, \ldots, \lambda_k)\,\vdash\, (n_1, \ldots ,n_k)} 
\widetilde{m}_{\lambda_1, \ldots, \lambda_k}\, \chi_{\lambda_1} \otimes \cdots \otimes \chi_{\lambda_k},
\]  
such that $B \in \textnormal{var}^G(A)$, then $\textnormal{Id}^G(A) \subseteq \textnormal{Id}^G(B)$. Consequently, $\Gamma_{n_1, \ldots, n_k}(B)$ can be embedded into $\Gamma_{n_1, \ldots, n_k}(A)$ for all $n=n_1+\cdots+n_k$. Therefore,  $\widetilde{m}_{\lambda_1, \ldots, \lambda_k}\leq m_{\lambda_1, \ldots, \lambda_k}$ for every multipartition $(\lambda_1, \ldots, \lambda_k)\vdash (n_1, \ldots, n_k)$ and $n=n_1+\cdots+n_k$.    
\end{remark}

In the following table, we present the decomposition of $\chi(\Gamma_{n_1, \ldots ,n_k})$ for $n=n_1+\cdots+n_k$, when $n=1$ and $n=2$, together with the corresponding linearized proper h.w.v.s. Here $g,h\in G-\{1\}$ are distinct and recall that $x \circ y$ denotes the Jordan product $xy+yx$.

\begin{table}[h!]
\centering
\begin{tabular}{|c c c c|} 
 \hline
 $\Gamma_{n_1, \ldots, n_k}$ & $\chi(\Gamma_{n_1, \ldots, n_k})$ & linearized proper h.w.v.s & multiplicity  \\ [0.5ex] 
 \hline\hline

 $\Gamma_{(1_g)}$ & $\chi_{((1)_g)}$ & $x_{1,g}$ & 1 \\ \hline

 $\Gamma_{(2_1)}$ & $\chi_{((1,1)_1)}$ & $[x_{1,1},x_{2,1}]$ & 1 \\ \hline 
 $\Gamma_{(2_g)}$ & $\chi_{((2)_g)}$ & $ x_{1,g}\circ x_{2,g}$ & 1 \\  
 & $\chi_{((1,1)_g)}$ & $[x_{1,g},x_{2,g}]$ & 1  \\ \hline
 $\Gamma_{(1_1,1_g)}$ & $\chi_{((1)_1)}\otimes \chi_{((1)_g)}$ & $[x_{1,1},x_{2,g}]$ & 1 \\ \hline
 $\Gamma_{(1_g,1_h)}$ & $2\chi_{((1)_g)}\otimes \chi_{((1)_h)}$ & $[x_{1,g},x_{2,h}],\, x_{1,g}x_{2,h}$ & 2 \\[1ex] 
 \hline
\end{tabular}
\caption{Proper $(n_1, \ldots , n_k)$-cocharacter of $\Gamma_{n_1, \ldots , n_k}$.}
\label{table:1}
\end{table} \vspace{-1cm}

\section{\texorpdfstring{Minimal unitary $G$-graded varieties with quadratic codimension growth}{Minimal unitary G-graded varieties with quadratic codimension growth}}

 Recall that a $G$-graded variety $\mathcal{V}$ is said to be {minimal of polynomial growth $n^t$} if $c_n^G(\mathcal{V}) \approx \alpha n^t$ and for every proper $G$-graded subvariety $\mathcal{U} \subsetneq \mathcal{V}$ we have $c_n^G(\mathcal{U}) \approx \beta n^p$ with $p < t$.

This section is devoted to the construction of important examples of $G$-graded algebras, which are essential for classifying minimal varieties with quadratic codimension growth. We focus on providing a detailed description of their $T_G$-ideals and codimensions.

Given a $G$-graded algebra $A=\bigoplus_{g \in G} A_g$, the {support} of $A$ is defined as $\operatorname{supp}(A) = \{\, g \in G \mid A_g \neq 0 \,\}.$ Henceforth, for each $G$-graded algebra $A$ (the meaning will be clear from the context), the variable $r$ will range over all elements of the set $G - \operatorname{supp}(A)$.

For $m \geq 2$, let $E_1 = \sum\limits_{i = 1}^{m-1} e_{i,i+1} \in UT_{m}$, where $e_{i,j}$ denotes the elementary matrix, with $1$ in the $(i,j)$-entry and zeros elsewhere, and $UT_{m}$ stands for the algebra of $m \times m$ upper triangular matrices. We also denote by $I_{m}$ the $m \times m$ identity matrix. For $g\in G$, consider $C_{m}^g$ the commutative algebra $C_{m}= \{\alpha I_m + \underset{1\leq i<m}{\sum} \alpha_i E_1^i\mid \alpha, \alpha_i \in F\}\subset UT_m$ endowed with the $G$-grading determined by $$I_m\in (C_m)_1\quad \mbox{ and }\quad E_1^i\in (C_m)_{g^i},\quad \mbox{ for }i=1,\ldots , m.$$

Throughout this paper, for an element $g\in G$, we denote its order by $|g|.$

\begin{lemma} \cite{Lamattina} For $g,h\in G$ with $|g|=2$, $|h|>2$ and $u,v\in\{1,h,h^2\}$ we have

\begin{enumerate}
    \item[1)] $\textnormal{Id}^G(C_m^{g})= \langle [x_{1,1},x_{2,1}],[x_{1,1},x_{2,g}],[x_{1,g},x_{2,g}], x_{1,g}\cdots x_{m,g}, x_{1,r} \rangle_{T_G}$. 
    \item[2)] $\textnormal{Id}^{G}(C_{3}^{h})=\langle  [x_{1,u},x_{2,v}] ,x_{1,h}x_{2,h}x_{3,h}, x_{1,h}x_{2,h^2},x_{1,h^2}x_{2,h}, x_{1,h^2}x_{2,h^2} ,x_{1,r} \rangle_{T_{G}}$.
    \item[3)] $c_n^G(C_m^g)=  \sum_{i=0}^{m-1}\binom{n}{i} $ and $C_3^h=1+2n+\binom{n}{2}$.
\end{enumerate}
    
\end{lemma}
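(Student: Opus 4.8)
The three statements are best proved together: parts (1) and (2) are obtained by identifying the $T_G$-ideal, and part (3) then falls out of the resulting description of the relatively free algebra. Write $A$ for $C_m^g$ (respectively $C_3^h$), let $I$ be the $T_G$-ideal generated by the displayed polynomials, and recall the $F$-basis $\{I_m=E_1^0,E_1,\dots,E_1^{m-1}\}$ of $A$ with multiplication $E_1^iE_1^j=E_1^{i+j}$ and $E_1^s=0$ for $s\ge m$. The first, easy, step is the inclusion $I\subseteq \textnormal{Id}^G(A)$, checked on admissible evaluations: every displayed commutator vanishes because $A$ is commutative, each generator $x_{1,r}$ vanishes because $A_r=0$ for $r\notin\operatorname{supp}(A)$, and each ``nilpotency'' monomial vanishes because an admissible product $x_{\,\cdot\,,g^{i_1}}\cdots x_{\,\cdot\,,g^{i_s}}$ lands in $F\,E_1^{i_1+\cdots+i_s}$ with exponent at least $m$ (for $x_{1,g}\cdots x_{m,g}$ in $C_m^g$) or at least $3$ (for the monomials of $C_3^h$).

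The heart of the argument is the reverse inclusion, and here I would pass to proper polynomials, which is legitimate because $A$ is unitary. Modulo $I$ the length-two commutator generators---which range over all pairs of support degrees---force \emph{all} long commutators to vanish: using the Leibniz rule $[ab,c]=a[b,c]+[a,c]b$ (and its mirror) together with the $T_G$-ideal closure, the relations on the support degrees propagate to commutators of arbitrary length. Hence the relatively free algebra $F\langle X,G\rangle/I$ is commutative on $\operatorname{supp}(A)$ and annihilates every variable of degree $r\notin\operatorname{supp}(A)$. Consequently every proper polynomial reduces modulo $I$ to a scalar times an ordered product of non-identity homogeneous variables, and the nilpotency generators cap the surviving products. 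For $C_m^g$ (with $|g|=2$, so $\operatorname{supp}=\{1,g\}$) the only surviving canonical proper polynomials are $1$ and $x_{1,g}\cdots x_{j,g}$ for $1\le j\le m-1$; for $C_3^h$ (with $\operatorname{supp}=\{1,h,h^2\}$) they are $1$, $x_{1,h}$, $x_{1,h^2}$ and $x_{1,h}x_{2,h}$. This yields the upper bounds $\gamma_{(j_g)}(A)\le 1$, respectively $\gamma_{(1_h)}(A),\gamma_{(1_{h^2})}(A),\gamma_{(2_h)}(A)\le 1$, with all remaining proper homogeneous codimensions equal to $0$.

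To see these bounds are attained I would evaluate by sending every degree-$g^i$ variable to $E_1^i$: then $x_{1,g}\cdots x_{j,g}\mapsto E_1^j\ne 0$ for $j\le m-1$, while $x_{1,h}x_{2,h}\mapsto E_1^2\ne 0$ and $x_{1,h^2}\mapsto E_1^2\ne 0$ (these last two sit in different multidegrees, so the grading keeps them apart). In the language of Proposition \ref{propos} and Table \ref{table:1} this says precisely that the one-row symmetric highest weight vectors survive while every highest weight vector built from a commutator dies. Feeding $\gamma_{(j_g)}(C_m^g)=1$ for $0\le j\le m-1$ into \eqref{codprop} gives $\gamma_n^G(C_m^g)=1$ for $0\le n\le m-1$ and $0$ thereafter, so \eqref{codecodcentral} yields $c_n^G(C_m^g)=\sum_{i=0}^{m-1}\binom{n}{i}$; similarly $\gamma_0=1,\ \gamma_1=2,\ \gamma_2=1$ for $C_3^h$ give $c_n^G(C_3^h)=1+2n+\binom{n}{2}$. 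Since $I\subseteq \textnormal{Id}^G(A)$ and both ideals now produce the same codimension in every degree, they coincide, which proves (1) and (2) and, simultaneously, (3).

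The step I expect to be the genuine obstacle is the reduction itself: proving that the length-two commutator relations on the support really do annihilate all longer commutators modulo $I$, so that no unanticipated proper polynomial survives the reduction. This is the point where one must argue with the explicit generators rather than simply assert that the quotient is commutative. Once the quotient is known to be commutative on $\operatorname{supp}(A)$ with the prescribed nilpotency, the counting of canonical forms and the squeeze between $I$ and $\textnormal{Id}^G(A)$ are routine.
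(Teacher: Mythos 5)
The paper contains no proof of this lemma: it is imported verbatim from the cited work of La Mattina, so there is no internal argument to compare against. Your blind proof is correct and is the standard argument for results of this type: verify the inclusion $I\subseteq \textnormal{Id}^G(A)$ by direct evaluation, reduce proper polynomials modulo the candidate $T_G$-ideal $I$ to a short list of canonical monomials, show by evaluating each degree-$g^i$ variable at $E_1^i$ that these monomials are not identities, and then squeeze $I$ against $\textnormal{Id}^G(A)$ while reading off the proper codimensions $\gamma_{n_1,\ldots,n_k}$, from which part (3) follows via \eqref{codprop} and \eqref{codecodcentral}; this matches the cocharacter data the paper records in Table~\ref{table:2}. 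The one step you flag as the ``genuine obstacle'' is in fact immediate and needs no Leibniz induction: since $I$ is closed under $G$-graded endomorphisms, $[f,h]\in I$ for any homogeneous $f\in\mathcal{F}_u$, $h\in\mathcal{F}_v$ with $u,v\in\operatorname{supp}(A)$, being a substitution instance of the generator $[x_{1,u},x_{2,v}]$; components of non-support degree also land in $I$, either because they contain some variable $x_{i,r}$ or because, after reordering modulo the commutator generators, they contain a substitution instance of one of the product generators (e.g.\ $x_{1,h}x_{2,1}x_{3,h^2}$ is obtained from $x_{1,h}x_{2,h^2}$ via $x_{1,h}\mapsto x_{1,h}x_{2,1}$). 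One micro-point worth making explicit in your reduction: ordered products of degree-$g$ variables of length strictly greater than $m$ lie in $I$ not by substitution into $x_{1,g}\cdots x_{m,g}$ (when $|g|=2$ a parity obstruction blocks substitution for length $m+1$, since no product of two degree-$g$ variables has degree $g$) but simply because $I$ is an ideal, so the generator may be multiplied on the right by further variables. With these two remarks supplied, your argument is complete and yields all three parts simultaneously, exactly as claimed.
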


Consider $$K_7=F(e_{11}+e_{22}+e_{33})+Fe_{12}+Fe_{13}+Fe_{23}$$ and for distinct elements $g,h\in G-\{1\}$ we define the algebra $K_7^{g,h}$ as the algebra $K_7$ with $G$-grading defined by
 $$e_{11}+e_{22}+e_{33}\in (K_7^{g,h})_1,\quad  e_{12}\in (K_7^{g,h})_g,\quad e_{23}\in (K_7^{g,h})_h\quad \mbox{ and }\quad e_{13}\in (K_7^{g,h})_{gh}.$$

 \begin{lemma} \cite{Cota} For all $g,h,q\in G$ with $g\neq h\neq gh\neq 1$ and $|q|>2$ we have
     \begin{enumerate}
         \item[1)] $\textnormal{Id}^{G}(K_{7}^{g,h})=\langle [x_{1,1},x_{2,1}], [x_{1,1},x_{2,t}], x_{1,t}x_{2,v}, x_{1,r}\mid t, v\in \{g,h, gh\}, (t,v)\neq (g,h)\rangle_{T_G}.$ 

         \item[2)] $\textnormal{Id}^{G}(K_{7}^{q,q^{-1}})=\langle [x_{1,1},x_{2,1}], [x_{1,1},x_{2,t}],x_{1,t}x_{2,v}, x_{1,r}\mid t,v\in \{q, q^{-1}\}, (t,v)\neq (q,q^{-1})\rangle_{T_G}.$

         \item[3)] $c_n^G(K_{7}^{g,h})=1+3n+2\binom{n}{2}$ and $c_n^G(K_{7}^{q,q^{-1}})=1+2n+2\binom{n}{2}$.
     \end{enumerate}
 \end{lemma}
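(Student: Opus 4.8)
The plan is to prove each $T_G$-ideal description and each codimension formula together, by a single codimension-matching argument built on the proper $G$-graded cocharacter. First I would check that every listed generator is a graded identity, by direct evaluation on the matrix units. In both gradings the degree-$1$ homogeneous component is central: it equals $F(e_{11}+e_{22}+e_{33})$ in case 1, while in case 2 it also contains $e_{13}$, which commutes with $e_{12}$ and with $e_{23}$. This gives the commutator relations $[x_{1,1},x_{2,1}]$ and $[x_{1,1},x_{2,t}]$. Since the only nonzero product of radical generators is $e_{12}e_{23}=e_{13}$, we get $x_{1,t}x_{2,v}\equiv 0$ for all $(t,v)\neq(g,h)$ (resp.\ $(q,q^{-1})$), and $x_{1,r}\equiv 0$ because $r\notin\supp(K_7^{g,h})$ (resp.\ $\supp(K_7^{q,q^{-1}})$). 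This establishes the inclusions $\langle\cdots\rangle_{T_G}\subseteq\textnormal{Id}^G(K_7^{g,h})$ and $\langle\cdots\rangle_{T_G}\subseteq\textnormal{Id}^G(K_7^{q,q^{-1}})$.

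Next I would compute the invariants $\gamma_i^G$ through Table~\ref{table:1} and the highest-weight-vector criterion of Proposition~\ref{propos}. One has $\gamma_0^G=1$; for $\gamma_1^G$ only the degrees in the nontrivial support contribute, namely $g,h,gh$ in case 1 and $q,q^{-1}$ in case 2, so $\gamma_1^G=3$ resp.\ $2$; for $\gamma_2^G$ every h.w.v.\ of Table~\ref{table:1} vanishes on $K_7$ except $x_{1,g}x_{2,h}$ (resp.\ $x_{1,q}x_{2,q^{-1}}$), and since $[x_{1,g},x_{2,h}]\equiv x_{1,g}x_{2,h}$ modulo $x_{1,h}x_{2,g}\equiv 0$, the two h.w.v.s of $\Gamma_{(1_g,1_h)}$ collapse to one, giving $\gamma_{(1_g,1_h)}=1$ and hence $\gamma_2^G=\binom{2}{1,1}\cdot 1=2$ in both cases. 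The decisive structural fact is that the radical $J=Fe_{12}+Fe_{23}+Fe_{13}$ satisfies $J^2=Fe_{13}$ and $J^3=0$; as every proper polynomial evaluates into $J$ (commutators against the central degree-$1$ part vanish), all proper polynomials of degree $\geq 3$ are identities, so $\gamma_i^G=0$ for $i\geq 3$. Substituting into $c_n^G(A)=\sum_{i=0}^n\binom{n}{i}\gamma_i^G(A)$ from \eqref{codecodcentral} yields $1+3n+2\binom{n}{2}$ and $1+2n+2\binom{n}{2}$, proving part 3).

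For the reverse inclusions in 1) and 2), let $I$ denote the listed $T_G$-ideal. Since $I\subseteq\textnormal{Id}^G(K_7)$, it suffices to show that modulo $I$ the proper codimensions are bounded above by the values just computed; equality of the two codimension sequences then forces $I=\textnormal{Id}^G(K_7)$. Working in the relatively free algebra $F\langle X,G\rangle/I$, the relations $[x_{1,1},x_{2,1}]$ and $[x_{1,1},x_{2,t}]$ make every degree-$1$ variable central, so any commutator containing a degree-$1$ variable vanishes and no degree-$1$ variable survives in a nonzero proper polynomial; the relations $x_{1,r}$ discard all out-of-support degrees; and the quadratic relations $x_{1,t}x_{2,v}$ collapse every product of two nontrivially graded variables to zero except $x_{1,g}x_{2,h}$ (resp.\ $x_{1,q}x_{2,q^{-1}}$). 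Thus in degree $\leq 2$ the surviving spanning set is exactly the h.w.v.s counted above, so $\gamma_0^G,\gamma_1^G,\gamma_2^G$ for $F\langle X,G\rangle/I$ do not exceed those of $K_7$.

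The step I expect to be the main obstacle is the vanishing of every proper polynomial of degree $\geq 3$ \emph{modulo $I$}, and in case 2) this is the genuinely delicate point: because $x_{1,q}x_{2,q^{-1}}$ has trivial $G$-degree, one might fear that it --- or the commutator $[x_{1,q},x_{2,q^{-1}}]$, which equals it modulo $I$ --- acts as a fresh central generator producing new proper polynomials in higher degree. To rule this out I would reduce any length-$3$ graded monomial directly, for instance $x_{1,q}x_{2,q^{-1}}x_{3,t}=x_{1,q}\bigl(x_{2,q^{-1}}x_{3,t}\bigr)\in I$ since $x_{2,q^{-1}}x_{3,t}\in I$, and then deduce that iterated commutators such as $\bigl[[x_{1,q},x_{2,q^{-1}}],x_{3,t}\bigr]$ reduce to $0$ modulo $I$ (the analogous reductions in case 1 are easier, as there $x_{1,g}x_{2,h}$ has nontrivial degree $gh$). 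This confirms $\gamma_i^G(F\langle X,G\rangle/I)=0$ for $i\geq 3$, completing the codimension match and hence the equalities $I=\textnormal{Id}^G(K_7^{g,h})$ and $I=\textnormal{Id}^G(K_7^{q,q^{-1}})$.
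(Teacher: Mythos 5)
Your proof is correct, and it is essentially the approach this paper takes for such computations: the paper itself states this lemma without proof (citing \cite{Cota}), but its own proof of the analogous lemma for $\textnormal{Id}^G(K_7^{g,h}\oplus\mathcal{G}_2^{g,h})$ follows exactly your template --- verify the generators by evaluation, kill all proper polynomials of degree $\geq 3$ modulo the candidate ideal, settle degrees $1$ and $2$ by explicit evaluations/h.w.v.\ counting via Proposition~\ref{propos}, and recover the codimensions from \eqref{codecodcentral} and \eqref{codprop}. Your explicit reduction of length-$3$ monomials in case 2), where $x_{1,q}x_{2,q^{-1}}$ has trivial $G$-degree, correctly disposes of the one genuinely delicate point (e.g.\ $[[x_{1,q},x_{2,q^{-1}}],x_{3,t}]$ reduces to $0$ since every adjacent pair other than $(q,q^{-1})$ lies among the quadratic generators), so the codimension sandwich closes and the argument is complete.
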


 Consider $\mathcal{G}_{m}=\langle 1,e_1, \ldots , e_m\mid e_ie_i=-e_je_i\rangle$ the subalgebra of the infinite-dimensional Grassmann algebra generated by $1,e_1,\ldots , e_m$ and for $g,h\in G$ with $gh=hg$ let us define $\mathcal{G}_{2}^{g,h}$ as the algebra $\mathcal{G}_2$ with the only $G$-grading such that
$$1\in (\mathcal{G}_{2}^{g,h})_1,\quad  e_{1}\in (\mathcal{G}_{2}^{g,h})_g,\quad  e_{2}\in (\mathcal{G}_{2}^{g,h})_h \quad \mbox{ and }\quad e_{1}e_2\in (\mathcal{G}_{2}^{g,h})_{gh}.$$

\begin{lemma} \cite{Cota, Misso} For $g,u,h\in G-\{1\}$, $w\in \{1,h,h^{2}\}$ and $q\in \{h,h^{2}\}$ we have
\begin{enumerate}
    \item[1)]  $\textnormal{Id}^G(\mathcal{G}_2^{1,1})=\langle [x_{1,1}, x_{2,1}, x_{3,1}], [x_{1,1}, x_{2,1}][x_{3,1}, x_{4,1}],x_{1,r} \rangle_{T_G}.$
    \item[2)] $\textnormal{Id}^G(\mathcal{G}_2^{1,g})=\langle [x_{1,1},x_{2,1}],[x_{1,1},x_{2,g},x_{3,1}],x_{1,g}x_{2,g}, x_{1,r} \rangle_{T_G}.$
    \item[3)] If $|u|=2$ then $\textnormal{Id}^G(\mathcal{G}_2^{u,u})=\langle [x_{1,1}, x_{2,1}], [x_{1,1},x_{2,u}],x_{1,u}x_{2,u}x_{3,u}, x_{1,u}\circ x_{2,u}, x_{1,r}\rangle_{T_G}.$
    \item[4)] If $|h|>2$ then $\textnormal{Id}^G(\mathcal{G}_2^{h,h})=\langle [x_{1,1},x_{2,w}], x_{1,h}\circ x_{2,h}, x_{1,h^2}x_{2,q}, x_{1,q}x_{2,h^2}, x_{1,r} \rangle_{T_G}.$

    \item[5)] $c_n^G(\mathcal{G}_2^{1,1})=1+\binom{n}{2}$, $c_n^G(\mathcal{G}_2^{1,g})=1+n+2\binom{n}{2}$, $c_n^G(\mathcal{G}_2^{u,u})=1+n+\binom{n}{2}$ and $c_n^G(\mathcal{G}_2^{h,h})=1+2n+\binom{n}{2}$.
\end{enumerate}    
\end{lemma}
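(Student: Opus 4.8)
The plan is to treat the four algebras $\mathcal{G}_2^{1,1}$, $\mathcal{G}_2^{1,g}$, $\mathcal{G}_2^{u,u}$ (with $|u|=2$) and $\mathcal{G}_2^{h,h}$ (with $|h|>2$) by a single uniform strategy. Fix one of them, write $A$ for the algebra and $I$ for the $T_G$-ideal generated by the corresponding list in (1)--(4). The goal of (1)--(4) is the identity $\operatorname{Id}^G(A)=I$, and (5) will then follow by reading off the codimension from \eqref{codecodcentral}. I would run a codimension sandwich: the inclusion $I\subseteq\operatorname{Id}^G(A)$ is the easy half and yields $c_n^G(A)\le c_n^G(F\langle X,G\rangle/I)$; I then bound $c_n^G(F\langle X,G\rangle/I)$ from above by reduction modulo $I$, and bound $c_n^G(A)$ from below by producing explicit non-identities, so that the two estimates coincide and force equality of the $T_G$-ideals.

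First I would establish $I\subseteq\operatorname{Id}^G(A)$ by direct evaluation. All four algebras share the basis $\{1,e_1,e_2,e_1e_2\}$ with $e_i^2=0$, $e_1e_2=-e_2e_1$ and $(e_1e_2)^2=0$; only the placement of $e_1,e_2,e_1e_2$ in the homogeneous components changes from case to case. On this basis the checks are routine: the relations $e_i^2=0$ and $e_1e_2=-e_2e_1$ annihilate triple commutators and products of two commutators, while the prescribed grading forces the listed homogeneous products to vanish (for example $x_{1,g}x_{2,g}$ in $\mathcal{G}_2^{1,g}$, the Jordan product $x_{1,u}\circ x_{2,u}$ in $\mathcal{G}_2^{u,u}$, and $x_{1,h^2}x_{2,q}$ in $\mathcal{G}_2^{h,h}$). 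In particular $x_{1,r}\in\operatorname{Id}^G(A)$ for every $r\notin\operatorname{supp}(A)$, as required.

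For the upper bound I would pass to the relatively free algebra $R=F\langle X,G\rangle/I$ and control its proper component. Since $A$ is unitary, $\operatorname{Id}^G(A)$ is generated by its proper multilinear polynomials, so it suffices to bound $\gamma_n^G(R)$. Using the commutator and product relations of $I$ (triple commutators, products of commutators, mixed commutators $[x_{1,1},x_{2,g},x_{3,1}]$, and the homogeneous vanishing products, according to the case) one reduces every proper multilinear polynomial to a short normal form: every proper polynomial of total degree $\ge 3$ is shown to lie in $I$, and the proper polynomials of degree $\le 2$ collapse to the explicit spanning set read off from Table \ref{table:1}. This gives $\gamma_i^G(R)=0$ for $i\ge 3$ together with the candidate values of $\gamma_0^G,\gamma_1^G,\gamma_2^G$ in each case. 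This reduction is the \emph{main obstacle}, since it requires careful bookkeeping of exactly which commutator and product monomials survive in each of the four distinct gradings.

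Finally I would match this against a lower bound on $A$ via Proposition \ref{propos} and Table \ref{table:1}. For each multidegree $(n_1,\ldots,n_k)$ with $n_1+\cdots+n_k\le 2$ I evaluate the relevant proper highest weight vectors on the basis of $A$ and count those independent modulo $\operatorname{Id}^G(A)$: for instance $[x_{1,1},x_{2,g}]\mapsto 2e_1e_2\ne 0$ in $\mathcal{G}_2^{1,g}$, $[x_{1,u},x_{2,u}]\mapsto 2e_1e_2\ne 0$ in $\mathcal{G}_2^{u,u}$, and both $x_{1,h}\mapsto e_1$ and $x_{1,h^2}\mapsto e_1e_2$ survive in $\mathcal{G}_2^{h,h}$ (giving $\gamma_1^G=2$ there). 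These evaluations realize exactly the multiplicities predicted by the upper bound, so $\gamma_i^G(A)$ equals the candidate value in every degree and $c_n^G(A)=c_n^G(R)$; combined with $I\subseteq\operatorname{Id}^G(A)$ this forces $\operatorname{Id}^G(A)=I$, proving (1)--(4). Substituting the resulting $\gamma_0^G,\gamma_1^G,\gamma_2^G$ into $c_n^G(A)=\sum_{i=0}^2\binom{n}{i}\gamma_i^G(A)$ from \eqref{codecodcentral} then yields the four codimension formulas in (5).
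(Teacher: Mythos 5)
The paper states this lemma without proof, citing \cite{Cota} and \cite{Misso}; your sandwich argument -- verify the listed generators are graded identities, collapse proper multilinear polynomials modulo $I$ onto the degree-$\le 2$ spanning set of Table~\ref{table:1}, match against evaluations of the highest weight vectors (your values $[x_{1,1},x_{2,g}]\mapsto 2e_1e_2$, $[x_{1,u},x_{2,u}]\mapsto 2e_1e_2$, and $\gamma_1^G(\mathcal{G}_2^{h,h})=2$ via $e_1$ and $e_1e_2$ all check out, giving $(\gamma_0,\gamma_1,\gamma_2)=(1,0,1),(1,1,2),(1,1,1),(1,2,1)$ in the four cases and hence (5) via \eqref{codecodcentral}) -- is exactly the standard method used in those references, and it is correct. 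The one deferred step, that $\Gamma_n^G\subseteq I$ for all $n\ge 3$, does go through but deserves the emphasis you give it: it requires genuinely graded substitutions rather than mere linear reduction, e.g.\ in case 4 the non-listed identity $x_{1,h}x_{2,h}x_{3,h}$ lies in $I$ only because one may substitute $x_{1,h^2}\mapsto x_{1,h}x_{2,h}$ into the generator $x_{1,h^2}x_{2,h}$, and similarly $[[x_{1,h},x_{2,h}],x_{3,1}]$ follows from $[x_{1,1},x_{2,h^2}]$ by the substitution $x_{2,h^2}\mapsto [x_{1,h},x_{2,h}]$.
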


\begin{lemma} \cite{Cota} Let $g,h,u\in G$, $g\neq h\neq gh\neq 1$, $|u|>2$. For all $s\in \{g,h,gh\}$, $y\in \{u,u^{-1}\}$ we have
    \begin{enumerate}
\item[1)] $\textnormal{Id}^G(\mathcal{G}_{2}^{g,h})= \langle [x_{1,1},x_{2,1}], [x_{1,1},x_{2,s}], x_{1,s}x_{2,s}, x_{1,g}\circ x_{2,h}, x_{1,gh}x_{2,s}, x_{1,s}x_{2,gh},x_{1,r}\rangle_{T_G}$.

       \item[2)] $\textnormal{Id}^G(\mathcal{G}_2^{u,u^{-1}})=\langle [x_{1,1},x_{2,1}],[x_{1,1},x_{2,y}] , x_{1,u}\circ x_{2,u^{-1}}, x_{1,u}x_{2,u}, x_{1,u^{-1}}x_{2,u^{-1}},x_{1,r} \rangle_{T_G}$.

       \item[3)] $c_n^G(\mathcal{G}_2^{g,h})=1+3n+2\binom{n}{2}$ and $c_n^G(\mathcal{G}_2^{u,u^{-1}})=1+2n+2\binom{n}{2}$.
    \end{enumerate}
\end{lemma}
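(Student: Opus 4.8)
The plan is to show that the $T_G$-ideal $I$ generated by the listed polynomials coincides with $\textnormal{Id}^G(A)$, where $A$ denotes $\mathcal{G}_2^{g,h}$ in part (1) and $\mathcal{G}_2^{u,u^{-1}}$ in part (2); the codimension formula in (3) then follows from the resulting proper codimensions via \eqref{codecodcentral}. Recall that $\mathcal{G}_2$ has the $F$-basis $\{1,e_1,e_2,e_1e_2\}$ subject to $e_1^2=e_2^2=0$ and $e_1e_2=-e_2e_1$. First I would verify the inclusion $I\subseteq \textnormal{Id}^G(A)$ by evaluating each generator on this basis: the degree-$1_G$ component is spanned by $1$ (in part (1)) or by $1$ and $e_1e_2$ (in part (2)) and is in both cases commutative and central, which kills $[x_{1,1},x_{2,1}]$ and $[x_{1,1},x_{2,s}]$; the nilpotency $e_1^2=e_2^2=(e_1e_2)^2=0$ kills $x_{1,s}x_{2,s}$; the anticommutation $e_1e_2=-e_2e_1$ kills $x_{1,g}\circ x_{2,h}$ (resp.\ $x_{1,u}\circ x_{2,u^{-1}}$); the relations $e_1(e_1e_2)=(e_1e_2)e_1=0$, etc., kill the mixed products involving degree $gh$; and $x_{1,r}\equiv 0$ since $r\notin\supp(A)$.

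Since $A$ is unitary, $\textnormal{Id}^G(A)$ is generated by its multilinear proper polynomials, so it suffices to prove $\Gamma_n^G\cap I=\Gamma_n^G\cap \textnormal{Id}^G(A)$ for all $n$, i.e.\ that $I$ and $\textnormal{Id}^G(A)$ have the same proper codimensions. For the lower bound I would read off the proper cocharacter of $A$ from Table \ref{table:1} and Proposition \ref{propos}, checking which highest weight vectors are non-identities of $A$. In degree $0$ this gives $\gamma_0^G(A)=1$; in degree $1$ the vectors $x_{1,s}$ with $s\in\supp(A)\setminus\{1\}$ survive, giving $\gamma_1^G(A)=3$ for $\mathcal{G}_2^{g,h}$ (degrees $g,h,gh$) and $\gamma_1^G(A)=2$ for $\mathcal{G}_2^{u,u^{-1}}$ (degrees $u,u^{-1}$). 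In degree $2$, all h.w.v.s of type $(2_1),(2_s),(1_1,1_s)$ vanish by centrality and nilpotency, while for the bidegree $(1_g,1_h)$ (resp.\ $(1_u,1_{u^{-1}})$) the two h.w.v.s $[x_{1,g},x_{2,h}]$ and $x_{1,g}x_{2,h}$ become proportional modulo $\textnormal{Id}^G(A)$, since $x_{1,g}\circ x_{2,h}\equiv 0$ forces $[x_{1,g},x_{2,h}]\equiv 2\,x_{1,g}x_{2,h}$; hence the multiplicity drops from $2$ to $1$ while $x_{1,g}x_{2,h}\mapsto e_1e_2\neq 0$. By \eqref{codprop} this yields $\gamma_2^G(A)=\binom{2}{1,1}\cdot 1=2$ in both parts.

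For the matching upper bound I would reduce an arbitrary proper polynomial modulo $I$. The relations $[x_{1,1},x_{2,1}]$ and $[x_{1,1},x_{2,s}]$ (together with $x_{1,r}$) make every variable of degree $1_G$ central modulo $I$, so by a standard derivation argument every commutator containing such a variable is $\equiv 0$; as degree-$1_G$ variables occur in a proper polynomial only inside commutators, every proper polynomial involving one vanishes modulo $I$. It then remains to treat polynomials in the non-identity variables: expanding the Lie commutators, each such proper polynomial is a linear combination of ordinary monomials in non-identity variables. In part (1) any monomial of length $\geq 2$ involving a degree-$gh$ variable vanishes by $x_{1,gh}x_{2,s}\equiv x_{1,s}x_{2,gh}\equiv 0$, and any monomial of length $\geq 3$ in degrees $g,h$ repeats one of these degrees; in part (2) any monomial of length $\geq 3$ in the two degrees $u,u^{-1}$ repeats a degree. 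In either case the anticommutation relation brings two equal-degree factors adjacent up to sign, where $x_{1,s}x_{2,s}\equiv 0$ annihilates the monomial. Thus the surviving monomials are $1$, the variables $x_{1,s}$ with $s\in\supp(A)\setminus\{1\}$, and in length two only $x_{1,g}x_{2,h}$ (resp.\ $x_{1,u}x_{2,u^{-1}}$); this gives $\Gamma_n^G\subseteq I$ for $n\geq 3$ and upper bounds on $\gamma_n^G$ matching the lower bounds above, whence $I=\textnormal{Id}^G(A)$ and formula (3) follows from \eqref{codecodcentral}.

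The main obstacle is the reduction of the last paragraph: one must check carefully that the given relations suffice to annihilate every proper polynomial of degree $\geq 3$, controlling the interaction between nested commutators and the anticommutation and nilpotency relations. The case $\mathcal{G}_2^{u,u^{-1}}$ requires extra care, because there $e_1e_2$ lies in the identity component, so the degree-$1_G$ space is two-dimensional; one must confirm that it is still central (so the centrality argument goes through unchanged) and that the length-two survivor $x_{1,u}x_{2,u^{-1}}$, although of homogeneous degree $1_G$, is genuinely a proper polynomial contributing to $\gamma_2^G(A)$ and is not absorbed by any lower-degree relation.
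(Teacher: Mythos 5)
Your proposal is correct, but a caveat on the comparison: this lemma is one of the results the paper imports from \cite{Cota} and states \emph{without} proof, so there is no internal proof to measure you against. The fairest benchmark is the paper's own proof of the analogous new lemma on $K_7^{g,h}\oplus\mathcal{G}_2^{g,h}$, and your argument follows exactly that template: verify $I\subseteq \textnormal{Id}^G(A)$ by evaluation on the basis $\{1,e_1,e_2,e_1e_2\}$; use the structure of the relations to get $\Gamma_n^G\subseteq I$ for $n\geq 3$; match degrees $1$ and $2$ via Table~\ref{table:1} and Proposition~\ref{propos}; invoke unitarity (identities follow from multilinear proper ones) to conclude $I=\textnormal{Id}^G(A)$; and extract part (3) from \eqref{codprop} and \eqref{codecodcentral} with $\gamma_0=1$, $\gamma_1=3$ (resp.\ $2$), $\gamma_2=2$. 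All numerical claims check out against Table~\ref{table:2}.

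The two subtleties you flag at the end are genuine but resolve exactly as you anticipate, and you could close them in one line each. For $\mathcal{G}_2^{u,u^{-1}}$ the identity component $F\cdot 1+F\,e_1e_2$ is indeed central, since $e_1e_2$ annihilates $e_1$, $e_2$ and itself ($e_1e_2e_1=-e_1^2e_2=0$, etc.), so the centrality reduction goes through verbatim; and $x_{1,u}x_{2,u^{-1}}$ is proper because properness is a condition on which \emph{variables} have homogeneous degree $1_G$ (they must sit inside commutators), not on the total degree of the monomial, so it spans a genuine one-dimensional contribution $\gamma_{(1_u,1_{u^{-1}})}(A)=1$ with multinomial weight $\binom{2}{1,1}=2$. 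One simplification worth noting for the degree-$\geq 3$ reduction in part (1): since $I$ is a $T_G$-ideal, substituting $x_{1,gh}\mapsto x_{i,g}x_{j,h}$ into the generator $x_{1,gh}x_{2,s}$ already places every monomial of length $3$ in the degrees $g,h$ inside $I$ directly, so the shuffle via anticommutation is only needed in part (2); there your pigeonhole-plus-anticommutation argument (choose two closest equal-degree occurrences, swap past the opposite-degree letters in between, then apply $x_{1,s}x_{2,s}$) is airtight.
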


\begin{lemma} Let $g,h,u\in G$, $g\neq h\neq gh\neq 1$, $|u|>2$. For all $s\in \{g,h,gh\}$, $p\in\{1,u,u^{-1}\}$ we have
\begin{enumerate}
    \item[1)] $\textnormal{Id}^G(K_7^{g,h}\oplus \mathcal{G}_2^{g,h})=\langle [x_{1,1},x_{2,1}],  [x_{1,1},x_{2,s}],  x_{1,s}x_{2,s},  x_{1,gh}x_{2,s},  x_{1,s}x_{2,gh}, x_{1,r}\rangle_{T_G}$.
    \item[2)] $\textnormal{Id}^G(K_7^{u,u^{-1}}\oplus \mathcal{G}_2^{u,u^{-1}})=\langle [x_{1,1},x_{2,p}] , x_{1,u}x_{2,u}, x_{1,u^{-1}}x_{2,u^{-1}},x_{1,r}\rangle_{T_G}$.
    \item[3)]The proper nonzero $(n_1, \ldots , n_k)$-cocharacters of $K_7^{g,h}\oplus \mathcal{G}_2^{g,h}$ and $K_7^{u,u^{-1}}\oplus \mathcal{G}_2^{u,u^{-1}}$ are, respectively, 
$$\chi_{((1)_g)} ,\quad  \chi_{((1)_{h})} ,\quad  \chi_{((1)_{gh})}  \quad \mbox{ and }\quad  2\chi_{((1)_g)}\otimes \chi_{((1)_h)} ;$$ 
$$\chi_{((1)_u)} ,\quad \chi_{((1)_{u^{-1}})}  \quad \mbox{ and }\quad   2\chi_{((1)_u)}\otimes \chi_{((1)_{u^{-1}})} .$$ 
    
    \item[4)] $c_n^G(K_7^{g,h}\oplus \mathcal{G}_2^{g,h})=1+3n+4\binom{n}{2}$ and $c_n^G(K_7^{u,u^{-1}}\oplus \mathcal{G}_2^{u,u^{-1}})=1+2n+4\binom{n}{2}$.
\end{enumerate}
\end{lemma}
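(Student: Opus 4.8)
The plan is to exploit the standard identity $\textnormal{Id}^G(K_7^{g,h}\oplus\mathcal{G}_2^{g,h})=\textnormal{Id}^G(K_7^{g,h})\cap\textnormal{Id}^G(\mathcal{G}_2^{g,h})$, which transfers the identities of the two summands (already recorded in the preceding lemmas) to their direct sum. I would prove the items in the order 3), then 4), then 1)--2), since the cocharacter computation is what drives the codimension count, and the codimension count is what pins down the $T_G$-ideal. The case of $K_7^{u,u^{-1}}\oplus\mathcal{G}_2^{u,u^{-1}}$ runs in perfect parallel, the only difference being that $gh=1$ turns the degree-$gh$ variables into degree-$1$ variables, leaving only the homogeneous degrees indexed by $u$ and $u^{-1}$.

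For part 3), since both summands have quadratic growth, \eqref{codecodcentral} forces $\gamma_m^G=0$ for $m>2$, so only the proper homogeneous spaces of Table \ref{table:1} with $n\leq 2$ can contribute. Using Proposition \ref{propos}, for each admissible multidegree I take the finite-dimensional space $W$ of proper highest weight vectors from Table \ref{table:1} and compute the multiplicity in the direct sum as $\dim W/\big(W\cap\textnormal{Id}^G(K_7^{g,h})\cap\textnormal{Id}^G(\mathcal{G}_2^{g,h})\big)$. In the one-variable degrees $(1_g),(1_h),(1_{gh})$ the space $W$ is one-dimensional and $x_{1,s}$ is a nonidentity of both summands, so each multiplicity is $1$. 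The decisive case is $(1_g,1_h)$, where $W=\spam\{[x_{1,g},x_{2,h}],\,x_{1,g}x_{2,h}\}$ is two-dimensional: evaluation on $K_7^{g,h}$ kills exactly the line spanned by $x_{2,h}x_{1,g}$, while evaluation on $\mathcal{G}_2^{g,h}$ kills exactly the line spanned by $x_{1,g}\circ x_{2,h}$; these two lines are distinct, so $W\cap\textnormal{Id}^G(K_7^{g,h})\cap\textnormal{Id}^G(\mathcal{G}_2^{g,h})=0$ and the multiplicity jumps to $2$, producing $2\chi_{((1)_g)}\otimes\chi_{((1)_h)}$. For every other multidegree (such as $(2_1),(2_s),(1_1,1_s),(1_g,1_{gh}),(1_h,1_{gh})$) each spanning product already vanishes on both summands, so the multiplicity is $0$, confirming the list in 3). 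Part 4) is then immediate: substituting these multiplicities into \eqref{codprop} gives $\gamma_1^G=3$ and $\gamma_2^G=\binom{2}{1,1}\cdot 2=4$ (respectively $\gamma_1^G=2,\ \gamma_2^G=4$) with all higher $\gamma_m^G=0$, and \eqref{codecodcentral} yields $c_n^G=1+3n+4\binom{n}{2}$ (respectively $1+2n+4\binom{n}{2}$).

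For parts 1)--2), let $I$ be the candidate $T_G$-ideal on the right-hand side. The inclusion $I\subseteq\textnormal{Id}^G(K_7^{g,h}\oplus\mathcal{G}_2^{g,h})$ is routine: each generator is checked to vanish on both summands under the prescribed grading (for example $x_{1,gh}x_{2,s}$ and $x_{1,s}x_{2,gh}$ vanish on $K_7^{g,h}$ because $(gh,s),(s,gh)\neq(g,h)$, and on $\mathcal{G}_2^{g,h}$ they are among its defining relations). For the reverse inclusion I argue by codimension comparison: working modulo $I$, the relations collapse every proper multilinear polynomial of degree $\geq 3$ into $I$ and reduce those of degree $\leq 2$ to a spanning set whose size matches the counts $\gamma_0^G,\gamma_1^G,\gamma_2^G$ obtained above, so that $\dim P_n^G/(P_n^G\cap I)\leq 1+3n+4\binom{n}{2}$. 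Since $I\subseteq\textnormal{Id}^G$ gives the opposite inequality $c_n^G=\dim P_n^G/(P_n^G\cap\textnormal{Id}^G)\leq\dim P_n^G/(P_n^G\cap I)$, the value $c_n^G=1+3n+4\binom{n}{2}$ from part 4) forces the two quotients to coincide, hence $I=\textnormal{Id}^G(K_7^{g,h}\oplus\mathcal{G}_2^{g,h})$; the case 2) is entirely analogous.

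I expect the main obstacle to be the reverse inclusion in 1)--2), namely verifying that $I$ is large enough: one must reduce an arbitrary proper polynomial modulo $I$ down to the low-degree basis predicted by the cocharacters. The multiplicity-$2$ phenomenon in degree $(1_g,1_h)$ is exactly what makes this delicate, since modulo $I$ both $[x_{1,g},x_{2,h}]$ and $x_{1,g}x_{2,h}$ must survive independently; the reduction therefore has to be carried out without inadvertently imposing either single-summand relation $x_{2,h}x_{1,g}\equiv 0$ or $x_{1,g}\circ x_{2,h}\equiv 0$, and keeping this bookkeeping consistent is the heart of the argument.
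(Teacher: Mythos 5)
Your proposal is correct in substance, and every hard computation in it coincides with the paper's: the two evaluations $x_{1,g}\mapsto e_1,\ x_{2,h}\mapsto e_2$ and $x_{1,g}\mapsto e_{12},\ x_{2,h}\mapsto e_{23}$ (giving the kernel lines spanned by $x_{1,g}\circ x_{2,h}$ and $x_{2,h}x_{1,g}$, hence $\alpha+2\beta=0$ and $\alpha+\beta=0$) are exactly how the paper kills the degree-$2$ combination and establishes $m_{((1)_g,(1)_h)}=2$, and item 4) is obtained in both texts by the same substitution into \eqref{propercharac}, \eqref{codprop} and \eqref{codecodcentral}. What differs is the architecture. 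The paper proves 1) first, by a direct $T_G$-ideal argument: reduce an arbitrary proper multihomogeneous identity of degree $2$ modulo $I$ to $\alpha x_{1,g}x_{2,h}+\beta[x_{1,g},x_{2,h}]$ and kill it with the two evaluations; items 3) and 4) are then read off via Proposition \ref{propos}. You reverse the order: you get 3) directly from $\textnormal{Id}^G(K_7^{g,h}\oplus\mathcal{G}_2^{g,h})=\textnormal{Id}^G(K_7^{g,h})\cap\textnormal{Id}^G(\mathcal{G}_2^{g,h})$ together with the kernel-line computation, deduce 4), and then pin down $I$ by a codimension sandwich $c_n^G\leq\dim P_n^G/(P_n^G\cap I)\leq 1+3n+4\binom{n}{2}$. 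This is a legitimate and standard alternative (note that the interpolation between proper and full codimensions for the quotient by $I$ is available because all generators of $I$ are proper, or equivalently because it suffices to compare $\Gamma_n^G\cap I$ with $\Gamma_n^G\cap\textnormal{Id}^G$ for $n\leq 2$ and invoke that identities of unitary algebras follow from proper multilinear ones); what it buys is that 1) comes for free once 3)--4) are known, at the cost of needing the dimension formula for $I$ rather than a one-line reduction.

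Two remarks. First, the obstacle you flag at the end --- that the degree-$2$ reduction must not ``inadvertently impose'' a single-summand relation --- is actually a non-issue in your own framework: for the sandwich you only need an upper \emph{spanning} bound modulo $I$ (namely that $\Gamma_{(1_g,1_h)}$ is spanned by $x_{1,g}x_{2,h}$ and $[x_{1,g},x_{2,h}]$ mod $I$, giving $\gamma_2\leq 4$); linear independence modulo $I$ is automatic from $I\subseteq\textnormal{Id}^G$ and part 3). Second, the genuinely unfinished step in your write-up is the collapse $\Gamma_n^G\subseteq I$ for $n\geq 3$, which you assert but do not prove --- though in fairness the paper asserts it without proof as well. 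It does hold, by an observation worth recording: modulo $I$, commutators containing degree-$1$ variables vanish, any product of two variables with degrees in $\{g,h,gh\}$ other than $x_gx_h$, $x_hx_g$ lies in $I$, and the surviving products $x_gx_h$, $x_hx_g$ have homogeneous degree $gh=hg$, so multiplying them by any further supported variable lands in the generators $x_{1,gh}x_{2,s}$, $x_{1,s}x_{2,gh}$ (in case 2), $x_ux_{u^{-1}}$ has degree $1$ and is central mod $I$, so any length-$3$ product reduces to one with two adjacent factors of equal degree). With that reduction supplied, your argument is complete.
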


\begin{proof}
     Since the second case is similar, we focus on the first case. Define $$I=\langle [x_{1,1},x_{2,1}],  [x_{1,1},x_{2,s}],  x_{1,s}x_{2,s},  x_{1,gh}x_{2,s},  x_{1,s}x_{2,gh}, x_{1,r}\rangle_{T_G}$$ and note that $I\subseteq \textnormal{Id}^G(K_7^{g,h}\oplus \mathcal{G}_2^{g,h})$. In order to prove the opposite inclusion, we first notice that $$\Gamma_n^G= \Gamma_n^G \cap \textnormal{Id}^G(K_7^{g,h}\oplus \mathcal{G}_2^{g,h})\subseteq I, \mbox{ for all }n\geq 3.$$ Moreover, it is clear that $$\Gamma_1^G \cap \textnormal{Id}^G(K_7^{g,h}\oplus \mathcal{G}_2^{g,h})=\mbox{span}_F\{x_{1,q}\mid q\in G-\{1,g,h,gh\}\}\subseteq I.$$

     Therefore, we now consider $f$ a multilinear and multihomogeneous proper identity of $K_7^{g,h}\oplus \mathcal{G}_2^{g,h}$ of degree $2$. After reducing $f$ module $I$ we may assume that $f=\alpha x_{1,g} x_{2,h}+\beta [x_{1,g},x_{2,h}]$. Taking the evaluation $x_{1,g}\mapsto e_{1}$ and $x_{1,h}\mapsto e_2$ we obtain $\alpha +2\beta =0$. Now, considering the evaluation $x_{1,g}\mapsto e_{12}$ and $x_{1,h}\mapsto e_{23}$ we obtain $\alpha +\beta =0$. Therefore we must have $\alpha =\beta=0$ and so $I=\textnormal{Id}^G(K_7^{g,h}\oplus \mathcal{G}_2^{g,h})$.

 In order to prove the item $3$, we make use of Proposition \ref{propos}. Notice that the proper highest weight vectors $x_{1,g},x_{1,h}, x_{1,gh}, x_{1,g}x_{2,h}$ and $[x_{1,g},x_{2,h}]$ are not identities of $K_7^{g,h}\oplus \mathcal{G}_2^{g,h}$. Therefore, $$m_{(\lambda)}=1 \quad \mbox{and} \quad 1\leq m_{((1)_g,(1)_h)}\leq 2, \quad \mbox{ for all }\lambda\in \{(1)_g,(1)_h, (1)_{gh}.\}$$ Moreover, by the evaluation above, there is no nonzero linear combination $\alpha x_{1,g} x_{2,h}+\beta [x_{1,g},x_{2,h}]$ resulting in an element of $\textnormal{Id}^G(K_7^{g,h}\oplus \mathcal{G}_2^{g,h})$. Therefore, by Proposition \ref{propos} we have ${m_{((1)_g,(1)_h)}}=2$.

Since $\gamma_{n_1, \ldots , n_k}(A)=\pi_{n_1, \ldots , n_k}(A)(1)$ then item $4$ is a consequence of equations (\ref{propercharac}), (\ref{codprop}) and (\ref{codecodcentral}).
     
\end{proof}

For each element $\alpha \in F^*$, we define an infinite family of algebras 
\[
W_\alpha = F(e_{11} + \cdots + e_{44}) + F(e_{12} + e_{34}) + F(\alpha e_{13} + e_{24}) + F e_{14} \subseteq UT_4.
\]
Let $g,h$ be distinct elements in $ G - \{1\}$ such that $gh=hg$. We define $W_{\alpha}^{g, h}$ as the algebra $W_\alpha$ equipped with a $G$-grading such that:
\[
e_{11} + \cdots + e_{44} \in (W_\alpha^{g, h})_1, \,\,
e_{12} + e_{34} \in (W_\alpha^{g, h})_g,\,\, 
\alpha e_{13} + e_{24} \in (W_\alpha^{g, h})_{h}\,\mbox{ and } \, e_{14} \in (W_\alpha^{g, h})_{gh}.
\]

\begin{lemma} \cite{Cota2} \label{Walpha} For all $u\in \{1,g,g^{-1}\} ,\, v\in \{1,g,h,gh\}$ and $ s\in \{g,h,gh\} $ we have
    \begin{enumerate}
     \item[1)] If $gh=1$ then $\textnormal{Id}^G(W_\alpha ^{g,g^{-1}})= \langle [x_{1,1},x_{2,u}], x_{1,g}x_{2,g}, x_{1,g^{-1}}x_{2,g^{-1}}, \alpha x_{1,g}x_{2,g^{-1}}-x_{2,g^{-1}}x_{1,g}, x_{1,r} \rangle_{T_G}$.

 \item[2)] If $gh\neq 1$ then $\textnormal{Id}^G(W_\alpha ^{g,h})= \langle [x_{1,1},x_{2,v}], x_{1,s}x_{2,s}, x_{1,gh}x_{2,s}, x_{1,s}x_{2,gh}, \alpha x_{1,g}x_{2,h}-x_{2,h}x_{1,g}, x_{1,r} \rangle_{T_G}.$

 \item[3)] $c_n^G(W_\alpha ^{g,g^{-1}})=1+2n+\displaystyle 2\binom{n}{2}$ and $c_n^G(W_\alpha ^{g,h})=1+3n+\displaystyle 2\binom{n}{2}.$
    \end{enumerate}
\end{lemma}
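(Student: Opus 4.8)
The plan is to follow the pattern of the preceding lemma and establish (1) and (2) by double inclusion of $T_G$-ideals, then deduce (3) from the resulting proper cocharacters. Since the two cases are analogous, I would treat $gh\neq 1$ in detail and indicate the modifications for $gh=1$ (where the component $e_{14}$ falls into the identity component $(W_\alpha^{g,g^{-1}})_1$). The starting point is the multiplication table of $W_\alpha$ on the basis $I=e_{11}+\cdots+e_{44}$, $a=e_{12}+e_{34}$, $b=\alpha e_{13}+e_{24}$, $c=e_{14}$: one computes $ab=c$, $ba=\alpha c$, $a^2=b^2=c^2=0$, while $c$ annihilates $a,b$ and $I$ is central. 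Writing $I$ for the $T_G$-ideal on the right-hand side, the inclusion $I\subseteq \textnormal{Id}^G(W_\alpha^{g,h})$ is a direct evaluation check: centrality of the degree-$1$ component kills $[x_{1,1},x_{2,v}]$; the relations $a^2=b^2=c^2=0$ and $ac=ca=bc=cb=0$ kill $x_{1,s}x_{2,s}$, $x_{1,gh}x_{2,s}$ and $x_{1,s}x_{2,gh}$; the equality $\alpha ab-ba=\alpha c-\alpha c=0$ yields the key relation $\alpha x_{1,g}x_{2,h}-x_{2,h}x_{1,g}$; and $x_{1,r}\mapsto 0$ outside the support.

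For the reverse inclusion I would use that, as $W_\alpha^{g,h}$ is unitary, $\textnormal{Id}^G(W_\alpha^{g,h})$ is generated by its multilinear proper polynomials, so it suffices to prove every multilinear proper identity lies in $I$. The crucial structural facts are that the radical $J=Fa+Fb+Fc$ satisfies $J^2=Fc$ and $J^3=0$, that every commutator containing a degree-$1$ variable vanishes (the degree-$1$ component being central), and that every Lie commutator of length $\geq 3$ in non-identity variables vanishes (since $[x_{g},x_{h}]$ evaluates into $Fc$ and $[c,-]=0$). From these it follows that every proper polynomial of degree $\geq 3$ is an identity, and, using the rewriting rules provided by the generators of $I$ — namely $x_{1,s}x_{2,s}\to 0$, $x_{1,gh}x_{2,s},\,x_{1,s}x_{2,gh}\to 0$, the reordering $x_{2,h}x_{1,g}\to \alpha x_{1,g}x_{2,h}$, and $[x_{1,1},-]\to 0$ — each such polynomial reduces to $0$ modulo $I$. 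Hence $\Gamma_n^G=\Gamma_n^G\cap \textnormal{Id}^G(W_\alpha^{g,h})\subseteq I$ for all $n\geq 3$, while for $n=1$ one has $\Gamma_1^G\cap \textnormal{Id}^G(W_\alpha^{g,h})=\textnormal{span}_F\{x_{1,r}\}\subseteq I$.

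It then remains to settle degree $2$, where the only proper type not already annihilated by a generator is $(1_g,1_h)$. After reducing a proper identity $f$ of this type modulo $I$ we may assume $f=\lambda x_{1,g}x_{2,h}+\mu x_{2,h}x_{1,g}$; the single admissible evaluation $x_{1,g}\mapsto a$, $x_{2,h}\mapsto b$ gives $(\lambda+\mu\alpha)c=0$, forcing $\lambda=-\mu\alpha$ and hence $f=-\mu(\alpha x_{1,g}x_{2,h}-x_{2,h}x_{1,g})\in I$. This yields $\textnormal{Id}^G(W_\alpha^{g,h})\subseteq I$ and completes (1) and (2); in the case $gh=1$ the same argument applies verbatim with $h=g^{-1}$, the type $(1_g,1_{g^{-1}})$ replacing $(1_g,1_h)$. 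For (3), Proposition \ref{propos} together with Table \ref{table:1} shows that the nonzero proper multiplicities are $m_{((1)_s)}=1$ for each non-identity $s$ in the support and $m_{((1)_g,(1)_h)}=1$ (the two highest weight vectors $x_{1,g}x_{2,h}$ and $[x_{1,g},x_{2,h}]$ spanning a one-dimensional space modulo $\textnormal{Id}^G(W_\alpha^{g,h})$), all higher multiplicities vanishing by the previous paragraph. Summation via (\ref{codprop}) gives $\gamma_1^G=3,\ \gamma_2^G=2$ (respectively $\gamma_1^G=2,\ \gamma_2^G=2$ when $gh=1$) and $\gamma_n^G=0$ for $n\geq 3$, and substituting into (\ref{codecodcentral}) produces the stated codimensions.

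The \emph{main obstacle} is the degree-$\geq 3$ claim of the second paragraph: showing that the listed relations do not merely make every high-degree proper polynomial an identity (which is immediate from $J^3=0$) but actually force it into the $T_G$-ideal $I$. This is a normal-form argument requiring one to verify that the rewriting rules are confluent enough to collapse all proper monomials of degree $\geq 3$, and it is the only step that goes beyond routine evaluation.
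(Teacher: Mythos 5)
Note first that the paper itself contains no proof of this lemma: it is imported verbatim from \cite{Cota2}, so there is no internal argument to compare against. Judged on its own, your reconstruction is correct and follows precisely the template the paper does exhibit, in its proof of the analogous lemma on $K_7^{g,h}\oplus\mathcal{G}_2^{g,h}$: double inclusion of $T_G$-ideals, disposal of all proper polynomials of degree $\geq 3$, a support argument in degree $1$, an evaluation argument in degree $2$, and then the codimensions via Proposition~\ref{propos} together with \eqref{codprop} and \eqref{codecodcentral}. Your multiplication table is right ($ab=c$, $ba=\alpha c$, $a^2=b^2=0$, $c$ central and annihilating $J$), your degree-$2$ evaluation $(\lambda+\mu\alpha)c=0$ is the correct and, since the homogeneous components of degrees $g$ and $h$ are one-dimensional, exhaustive check, and your multiplicity count ($\gamma_1^G=3$ or $2$, $\gamma_2^G=2$, with $m_{((1)_g,(1)_h)}=1$ because the relation makes the two h.w.v.s dependent modulo the ideal while $ab=c\neq 0$ keeps the quotient nonzero) agrees with Table~\ref{table:2} and yields item (3).

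Two remarks on the step you single out as the main obstacle. First, no confluence or diamond-lemma analysis is needed: a $T_G$-ideal is closed under graded substitutions, so for $gh\neq 1$ every spanning proper product of degree $\geq 3$ is directly a substitution instance of a generator multiplied by remaining factors. Indeed, any length-$3$ word in degrees $\{g,h,gh\}$ either has two adjacent letters of equal degree (an instance of $x_{1,s}x_{2,s}$) or contains a factor of degree $gh$ adjacent to another letter (an instance of $x_{1,gh}x_{2,s}$ or $x_{1,s}x_{2,gh}$; for the alternating words such as $x_{1,g}x_{2,h}x_{3,g}$ one substitutes $x_{1,gh}\mapsto x_{1,g}x_{2,h}$, which is legitimate since $gh=hg$), while any commutator containing a degree-$1$ entry is an instance of $[x_{1,1},x_{2,v}]$. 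Second — and this is the only genuine inaccuracy — the case $gh=1$ is \emph{not} handled verbatim: there the prefix $x_{1,g}x_{2,g^{-1}}$ has degree $1$ and no generator annihilates products with a degree-$1$ factor, so the alternating monomials $x_{1,g}x_{2,g^{-1}}x_{3,g}$ and the like must instead be collapsed by applying the relation $\alpha\,x_{1,g}x_{2,g^{-1}}\equiv x_{2,g^{-1}}x_{1,g}$ and then $x_{1,g}x_{2,g}$ (or $x_{1,g^{-1}}x_{2,g^{-1}}$); this is exactly where the hypothesis $\alpha\in F^*$ is used, since one divides by $\alpha$. With that adjustment your argument closes completely and is, modulo the external reference, a faithful reconstruction.
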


Observe that, when $\alpha =-1$ the map $\varphi: \mathcal{G}_2^{g,h}\rightarrow  W_{-1}^{g,h} $ given by $$\varphi(1_F)=e_{11}+\cdots +e_{44},\quad \varphi(e_1)=e_{12}+e_{34},
\quad \varphi(e_2)=-e_{13}+e_{24}, \quad \varphi(e_1e_2)=e_{14}$$ is an isomorphism of $G$-graded algebras. Since the algebra $\mathcal{G}_2$ plays a significant role in the theory of PI-algebras, we have chosen to treat this algebra separately, rather than incorporating it into the algebras $W_\alpha$.  

We now present the classification of minimal unitary $G$-graded varieties with quadratic growth.

\begin{theorem} \cite{Cota2, Plamen}
    Let $A$ be a unitary minimal $G$-graded algebra such that $c_n^G(A)\leq \alpha n^2$, for some $\alpha$. Then, $A$ is $T_G$-equivalent to one of the following $G$-graded algebras: $C_2^g$, $C_3^g$, $K_7^{r,s}$, $\mathcal{G}_2^{g,h}$ or $W_\alpha ^{u,v}$, where $
    \alpha \in F^*$, $g,h,u,v,r,s\in G$, $r\neq s\neq 1$, $u\neq v\neq 1$, $gh=hg$ and $uv=vu$.
\end{theorem}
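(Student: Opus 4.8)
The plan is to reduce the problem to the proper cocharacters of degree at most $2$ and then match each admissible pattern against the explicit algebras of the preceding lemmas. Since $A$ is unitary and $c_n^G(A)\leq\alpha n^2$, Theorem \ref{codproper} forces $\gamma_m^G(A)=0$ for all $m\geq 3$; hence $\operatorname{Id}^G(A)$ contains every proper graded polynomial of degree $\geq 3$, and $\operatorname{var}^G(A)$ is completely determined by the proper cocharacters $\pi_1(A)$ and $\pi_2(A)$. By Proposition \ref{propos} this data amounts to knowing which of the linearized proper highest weight vectors of Table \ref{table:1} are non-identities of $A$, together with the linear relations they satisfy modulo $\operatorname{Id}^G(A)$. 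Since each $\Gamma_{n_1,\dots,n_k}(A)$ is a quotient of $\Gamma_{n_1,\dots,n_k}$, the only components that can occur are $\chi_{((1)_g)}$ in degree $1$ and $\chi_{((1,1)_1)},\chi_{((2)_g)},\chi_{((1,1)_g)},\chi_{((1)_1)}\otimes\chi_{((1)_g)},\chi_{((1)_g)}\otimes\chi_{((1)_h)}$ in degree $2$, the multiplicity of the last being at most $2$ and all others at most $1$.

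Next I would extract the constraints imposed by minimality. By (\ref{codprop}) the number $\gamma_2^G(A)=\sum\binom{2}{n_1,\dots,n_k}\gamma_{n_1,\dots,n_k}(A)$ splits as an independent sum over the degree-$2$ components, and by Remark \ref{multibounded} every proper subvariety is obtained by strictly lowering some multiplicity. Consequently, if two distinct degree-$2$ components were simultaneously nonzero, one could annihilate one of them while keeping the other, producing a proper subvariety of the same quadratic growth and contradicting minimality. Thus minimality forces the top-degree part of the cocharacter to reduce to a single indecomposable component. The delicate point is that this surviving degree-$2$ generator drags along certain forced degree-$1$ components through the algebra structure: a non-identity product such as $x_{1,g}x_{2,h}$ is homogeneous of degree $gh$, so passing to a subvariety in which the homogeneous degree-$gh$ component vanishes would annihilate it as well. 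I would formalize this coupling to show that no degree-$1$ generator linked to the surviving degree-$2$ generator can be discarded independently, which is exactly why the listed algebras remain minimal despite having large support.

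I would then run the case analysis on the single surviving top component. If the growth is merely linear ($\gamma_2^G=0$, $\gamma_1^G\neq 0$), minimality leaves a unique $\chi_{((1)_g)}$ and gives $A\sim_{T_G}C_2^g$. In the quadratic case I would treat the degree-$2$ components one at a time: $\chi_{((1,1)_1)}$ yields $\mathcal{G}_2^{1,1}$; $\chi_{((2)_g)}$ yields $C_3^g$; $\chi_{((1,1)_g)}$ yields $\mathcal{G}_2^{u,u}$ when $|u|=2$ and $\mathcal{G}_2^{h,h}$ when $|h|>2$; and $\chi_{((1)_1)}\otimes\chi_{((1)_g)}$ yields $\mathcal{G}_2^{1,g}$. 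The richest case is $\chi_{((1)_g)}\otimes\chi_{((1)_h)}$ with multiplicity exactly $1$ (for $g\neq h$ in $G-\{1\}$): here $\Gamma_{(1_g,1_h)}$ is two-dimensional, spanned by $x_{1,g}x_{2,h}$ and $x_{2,h}x_{1,g}$, and a one-dimensional quotient is cut out by a relation of the form $x_{1,g}x_{2,h}\equiv 0$, $x_{2,h}x_{1,g}\equiv 0$, or $\alpha x_{1,g}x_{2,h}-x_{2,h}x_{1,g}\equiv 0$ with $\alpha\in F^*$. The first two give $K_7^{g,h}$ (up to interchanging $g$ and $h$), while the last gives $W_\alpha^{g,h}$, with the anticommuting case $\alpha=-1$ recovering $\mathcal{G}_2^{g,h}$; the subcase $gh=1$ is handled identically through $W_\alpha^{g,g^{-1}}$. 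In each situation I would match the cocharacter and the explicit $T_G$-ideal recorded in Lemma \ref{Walpha} and the lemmas preceding it to conclude the asserted $T_G$-equivalence, and I would confirm minimality by checking that imposing any non-identity highest weight vector, or any linked degree-$1$ variable, forces the corresponding degree-$2$ multiplicity, and hence $\gamma_2^G$, to vanish.

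The main obstacle is precisely this last case: proving that the one-parameter relation $\alpha x_{1,g}x_{2,h}-x_{2,h}x_{1,g}$ exhausts all multiplicity-one configurations supported on a pair $\{g,h\}$ that are not of $K_7$ type, that distinct values of $\alpha$ give $T_G$-inequivalent yet uniformly minimal varieties, and that the multiplicity-two configuration is never minimal. This requires a careful analysis of the lattice of one-dimensional $S_{n_1,\dots,n_k}$-submodules of $\Gamma_{(1_g,1_h)}$ together with the explicit matrix evaluations used in the lemmas, so as to certify that each admissible quotient is realized by exactly one algebra in the parametrized family $W_\alpha^{g,h}$ and that all of its proper subvarieties have strictly smaller codimension growth.
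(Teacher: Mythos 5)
A preliminary point: the paper does not prove this theorem — it is imported from \cite{Cota2, Plamen} — so your proposal can only be compared against the machinery the paper builds around it, and against the known proof strategy. Measured that way, your route is the intended one: reduce via Theorem \ref{codproper} to the proper cocharacters in degrees $1$ and $2$, read off the admissible multiplicities from Table \ref{table:1}, and realize each surviving highest-weight configuration by an explicit algebra. The witnesses you invoke are exactly the ones the paper constructs in its Section 5 lemmas (the subalgebra-modulo-$T_G$-ideal constructions giving $C_2^g$, $C_3^g$, and, in Lemma \ref{xgxh}, $K_7^{g,h}$ and $W_\alpha^{g,h}$). Your analysis of the two-dimensional module $\Gamma_{(1_g,1_h)}$ is correct, including the trichotomy $x_{1,g}x_{2,h}\equiv 0$, $x_{2,h}x_{1,g}\equiv 0$, or $\alpha x_{1,g}x_{2,h}-x_{2,h}x_{1,g}\equiv 0$ with $\alpha\in F^*$, and you rightly note that multiplicity $2$ is never minimal: by Lemma \ref{ultimo} it forces $K_7^{g,h}\oplus\mathcal{G}_2^{g,h}$ into the variety, and $\mathrm{var}^G(K_7^{g,h})$ is then a proper subvariety of the same quadratic growth. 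Your ``coupling'' observation is also correct and essential: since $x_{1,g}x_{2,h}$ lies in the $T_G$-ideal generated by $x_{1,gh}$ (substitute $x_{1,gh}\mapsto x_{1,g}x_{2,h}$), the degree-$1$ components attached to the surviving degree-$2$ component cannot be discarded without destroying it — this is precisely why $C_3^g$ with $|g|>2$, $K_7^{g,h}$, and $W_\alpha^{g,h}$ are minimal despite large support; and homogeneity ($ab\in A_{gh}$, $ba\in A_{hg}$) correctly confines the $W_\alpha$ and $\mathcal{G}_2$ cases to commuting pairs, leaving only $K_7$-type quotients when $gh\neq hg$.

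The one step you assert without justification is the annihilation argument: that strictly lowering a nonzero multiplicity produces a proper subvariety which \emph{still} has quadratic growth. Properness is clear, but the enlarged $T_G$-ideal $I'$ need not a priori come from a unitary algebra, so Theorem \ref{codproper} does not directly yield the lower bound. This is repairable by a padding argument: if $w$ is a proper degree-$2$ polynomial with $w\notin I'$, then $x_{3,1}\cdots x_{n,1}\,w\notin I'$, because in the unitary free graded algebra the substitution sending the degree-$1$ variables $x_{j,1}$ to $1$ is a graded endomorphism, so $T_G$-ideals are closed under it (proper polynomials vanish when a variable inside a commutator is evaluated at $1$, so the generators behave correctly), and this substitution would carry the padded element back to $w$; counting the $\binom{n}{2}$ placements of the two distinguished variables gives $c_n^G\geq\binom{n}{2}$. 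You should also make explicit a fact you use silently: multilinear degree-$2$ consequences of a degree-$2$ highest weight vector stay inside its own multihomogeneous type and isotypic component (graded substitutions preserve the type, and products raise the degree), so killing one component genuinely leaves the others intact. With these two points spelled out, and the final $T_G$-equivalence concluded via Remark \ref{remark} together with the explicit $T_G$-ideals of the candidate algebras, your outline becomes a complete proof along the same lines as \cite{Cota2}.
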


Below we present the decomposition of the proper $(n_1, \ldots , n_k)$-cocharacters of the minimal varieties introduced before for $n=n_1+ \cdots + n_k$ when $n=1$ and $n=2$.

\begin{table}[h!]
\centering
\begin{tabular}{|c c c|} 
 \hline
 & $n=1$ & $n=2$  \\[0.5ex] 
 \hline\hline 
 $C_2^g$ & $\chi_{((1)_g)}$ &   \\ 
 $C_3^g,\, |g|=2$ & $\chi_{((1)_g)} $ & $\chi_{((2)_g)} $\\
 $C_3^g,\, |g|>2$ & $\chi_{((1)_g)} $, $\chi_{((1)_{g^2})} $ & $\chi_{((2)_g)} $  \\
 $\mathcal{G}_2^{1,1}$ &  &  $\chi_{((1,1)_1)} $  \\
 $\mathcal{G}_2^{1,g}$ & $\chi_{((1)_g)} $ & $\chi_{((1)_1)}\otimes \chi_{((1)_g)} $ \\
 $\mathcal{G}_2^{g,g},\, |g|=2$ & $\chi_{((1)_g)} $ & $\chi_{((1,1)_g)} $  \\
 $\mathcal{G}_2^{g,g},\, |g|>2$ & $\chi_{((1)_g)} $, $\chi_{((1)_{g^2})} $ & $\chi_{((1,1)_g)} $  \\
 $K_7^{g,h},\, gh\neq 1$ & $\chi_{((1)_g)} $, $\chi_{((1)_{h})} $, $\chi_{((1)_{gh})} $ & $\chi_{((1)_g)}\otimes \chi_{((1)_h)} $  \\  $K_7^{g,h},\, gh= 1$ & $\chi_{((1)_g)} $, $\chi_{((1)_{h})} $ & $\chi_{((1)_g)}\otimes \chi_{((1)_h)} $  \\
 $\mathcal{G}_2^{g,h},\, gh\neq 1$ & $\chi_{((1)_g)} $, $\chi_{((1)_{h})} $, $\chi_{((1)_{gh})} $ & $\chi_{((1)_g)}\otimes \chi_{((1)_h)} $  \\
 $\mathcal{G}_2^{g,h},\, gh= 1$ & $\chi_{((1)_g)} $, $\chi_{((1)_{h})} $,  & $\chi_{((1)_g)}\otimes \chi_{((1)_h)} $  \\
 $W_{\alpha}^{g,h},\, gh\neq 1$ & $\chi_{((1)_g)} $, $\chi_{((1)_{h})} $, $\chi_{((1)_{gh})} $ & $\chi_{((1)_g)}\otimes \chi_{((1)_h)} $  \\
 $W_{\alpha}^{g,h},\, gh= 1$ & $\chi_{((1)_g)} $, $\chi_{((1)_{h})} $ & $\chi_{((1)_g)}\otimes \chi_{((1)_h)} $ \\[1ex] 
 \hline
\end{tabular}
\caption{Proper $(n_1, \ldots , n_k)$-cocharacters of minimal varieties.}
\label{table:2}
\end{table}

\section{Characterizing varieties with nonzero multiplicities}

In this work we are interested in unitary $G$-graded varieties $\textnormal{var}^G(A)$ with quadratic codimension growth. To this end, we establish a direct connection between the nonzero multiplicities in the proper $(n_1, \ldots, n_k)$-cocharacters and the minimal varieties.

From now on, we assume that $A$ is a unitary $G$-graded algebra with quadratic codimension growth. By Theorem \ref{codproper}, this means that $\gamma_i^G(A)=0$ for all $i\geq 3$, and so $\Gamma_i^G\subseteq \textnormal{Id}^G(A)$ for all $i\geq 3$. Therefore, by the previous discussion we only need to investigate the subspaces $\Gamma_1^G$ and $\Gamma_2^G$, which are completely determined by their multihomogeneous components $\Gamma_{n_1, \ldots , n_k}$, $n=n_1+\cdots +n_k$ when $n=1$ and $n=2$. 

Since $\Gamma_{n_1, \ldots, n_k}(A)$ can be seen as an $S_{n_1, \ldots, n_k}$-submodule of $\Gamma_{n_1, \ldots, n_k}$, the multiplicities in the decomposition \eqref{propercharac} of $\pi_{n_1, \ldots , n_k}(A)$ are bounded by the multiplicities of the corresponding irreducible character in the decomposition of $\chi(\Gamma_{n_1, \ldots, n_k})$. Therefore, by Table \ref{table:1}, for a unitary $G$-graded algebra with quadratic codimension growth and proper $(n_1, \ldots , n_k)$-cocharacter as in \eqref{propercharac}, we have 
\begin{equation} \label{mult}
    0\leq m_{\lambda}\leq 1, \quad m_{\mu}=0 \quad \mbox{and}\quad 0\leq m_{((1)_g,(1)_h)}\leq 2,
\end{equation}
for all $\lambda\in S=\{((1)_g),((1,1)_1), ((2)_g), ((1,1)_g), ((1)_1, (1)_g)\}$ with $g,h\in G-\{1\}$, $g\neq h$, and $\mu \notin S$. Moreover, we necessarily have $m_{\lambda}\neq 0$ for some multipartition $\lambda$ of $2$.

The following remark, though simple, is fundamental and deserves special attention.

\begin{remark} \label{remark}
    Let $A$ and $B$ be unitary $G$-graded algebras with quadratic growth and assume that $B\in \textnormal{var}^G(A)$. If $B$ and $A$ have the same multiplicities in the decomposition of all proper $(n_1, \ldots , n_k)$-cocharacters, then $\textnormal{var}(A)=\textnormal{var}(B)$. 
\end{remark}

\begin{proof}
    In fact, if $B\in \textnormal{var}^G(A)$ then $\textnormal{Id}^G(A)\subseteq \textnormal{Id}^G(B)$. Since $A$ and $B$ are unitary $G$-graded algebras with quadratic growth, to prove the opposite inclusion, it is sufficient to show that $\Gamma_n^G\cap \textnormal{Id}^G(B)\subseteq \Gamma_n^G\cap \textnormal{Id}^G(A)$ for all $n=1,2$. Since the multiplicities are the same, we must have $\Gamma_{(n_1, \ldots , n_k)}\cap \textnormal{Id}^G(B)= \Gamma_{(n_1, \ldots , n_k)}\cap \textnormal{Id}^G(A)$ for all $n=n_1+ \cdots +n_k$, when $n=1$ and $n=2$. As the polynomial identities of $B$ follow from its proper multilinear and multihomogeneous identities, the result follows.
\end{proof}

Motivated by Theorem~\ref{35}, in the subsequent results we assume that $A$ is a finite-dimensional unitary $G$-graded algebra with quadratic codimension growth of the type $F+J(A)$, where $J(A)$ denotes the Jacobson radical of $A$. Recall that $J(A)$ is a $G$-graded ideal and, therefore, decomposes into homogeneous components as
\[
J(A)=\bigoplus_{g\in G} J(A)_g.
\]

The next step is to analyze all possible nonzero values of the multiplicities $m_{\lambda_1,\ldots , \lambda_k}$ in (\ref{mult}). More specifically, we characterize the possible values of these multiplicities in terms of an algebra that generates a minimal variety within the given variety.

\begin{lemma} \label{c2g}
     $m_{((1)_g)}\neq 0$ if and only if $C_2^{g}\in \textnormal{var}^G(A)$.
\end{lemma}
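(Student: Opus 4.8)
The plan is to prove both directions by passing through the proper highest weight vector $x_{1,g}$ attached to the multipartition $((1)_g)$ (recorded in Table \ref{table:1}), and by realizing $C_2^g$ inside $\textnormal{var}^G(A)$ as a $G$-graded homomorphic image of a cyclic subalgebra of $A$. Throughout I use that, by Proposition \ref{propos}, $m_{((1)_g)}\neq 0$ is equivalent to $x_{1,g}\notin \textnormal{Id}^G(A)$, which in turn is equivalent to $A_g\neq 0$, since an admissible evaluation of $x_{1,g}$ is just a choice of a homogeneous element of degree $g$.

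For the implication $C_2^g\in\textnormal{var}^G(A)\Rightarrow m_{((1)_g)}\neq 0$, I would argue directly. The hypothesis gives $\textnormal{Id}^G(A)\subseteq\textnormal{Id}^G(C_2^g)$. Since $e_{12}\neq 0$ in $C_2^g$, the monomial $x_{1,g}$ is not a $G$-graded identity of $C_2^g$, and hence (by the inclusion of $T_G$-ideals) it is not an identity of $A$ either. By Proposition \ref{propos} this forces $m_{((1)_g)}\neq 0$. (Alternatively, one reads off $\widetilde m_{((1)_g)}^{C_2^g}=1$ from Table \ref{table:2} and applies Remark \ref{multibounded}.)

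For the converse, suppose $m_{((1)_g)}\neq 0$. Then $x_{1,g}\notin\textnormal{Id}^G(A)$, so there is $0\neq a\in A_g$. Because $A=F+J(A)$ and $g\neq 1$, the degree-$g$ component of $F\cdot 1$ vanishes, whence $a\in J(A)_g$; as $A$ is finite dimensional, $a$ is nilpotent of some index $k\geq 2$. The cyclic subalgebra $C:=\spam_F\{1,a,\ldots,a^{k-1}\}$ is then a commutative $G$-graded subalgebra of $A$, graded by $a^i\in A_{g^i}$, and the assignment $a^i\leftrightarrow E_1^i$ gives a $G$-graded isomorphism $C\cong C_k^g$. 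Being a graded subalgebra, $\textnormal{Id}^G(A)\subseteq\textnormal{Id}^G(C)=\textnormal{Id}^G(C_k^g)$. Finally I would exhibit $C_2^g$ as a $G$-graded quotient of $C_k^g$: the subspace $\spam_F\{E_1^2,\ldots,E_1^{k-1}\}$ is a graded ideal of $C_k^g$, and the quotient is spanned by the images of $I_k$ and $E_1$ with $\overline{E_1}^{\,2}=0$, i.e. it is $G$-graded isomorphic to $C_2^g$. Hence $\textnormal{Id}^G(C_k^g)\subseteq\textnormal{Id}^G(C_2^g)$, and chaining the inclusions yields $\textnormal{Id}^G(A)\subseteq\textnormal{Id}^G(C_2^g)$, that is $C_2^g\in\textnormal{var}^G(A)$.

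The step requiring care is the converse: one must invoke the structural hypothesis $A=F+J(A)$ to guarantee that the witnessing element $a\in A_g$ lies in the radical and is therefore nilpotent, so that the subalgebra it generates is finite dimensional of the prescribed shape $C_k^g$. Once this is secured, the remaining verifications — that the grading on $F\langle a\rangle$ matches that of $C_k^g$ and that collapsing the higher powers of $a$ produces precisely $C_2^g$ — are routine.
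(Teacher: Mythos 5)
Your proof is correct and follows essentially the same route as the paper's: the forward direction via Remark \ref{multibounded} (or a direct evaluation on $e_{12}$), and the converse by producing $a\in J(A)_g$ from $x_{1,g}\notin\textnormal{Id}^G(A)$ and realizing $C_2^g$ as a graded quotient of the subalgebra generated by $1$ and $a$. The only cosmetic difference is that you first identify that subalgebra with $C_k^g$ and then kill $E_1^2,\ldots,E_1^{k-1}$, whereas the paper quotients directly by the ideal generated by $a^2$; your explicit use of nilpotency of $a$ (from $A=F+J(A)$ finite dimensional) just makes precise a step the paper leaves implicit.
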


\begin{proof}
    Assume that  $m_{((1)_g)}\neq 0$. According to Proposition \ref{propos} we have $x_{1,g}\neq 0$. Therefore, there exists a nonzero element $a\in J(A)_g$. Let $R$ be the $G$-graded subalgebra of $A$ generated by $1_F$ and $a$ and consider $I$ the $T_G$-ideal of $R$ generated by $a^2$. Hence, $R/I$ is a $G$-graded algebra that is isomorphic to $C_2^g$ through the isomorphism of $G$-graded algebras given by $\overline{1_F}\mapsto e_{11}+e_{22}$ and $\overline{a}\mapsto e_{12}$. Therefore, $C_2^g\cong R/I\in \textnormal{var}^G(R)\subseteq \textnormal{var}^G(A).$

    Clearly, by Remark \ref{multibounded} and Table \ref{table:2}, if $C_2^{g}\in \textnormal{var}^G(A)$ then  $m_{((1)_g)}\neq 0$.
\end{proof}

\begin{lemma} \label{g211}
 $m_{((1,1)_1)}\neq 0$ if and only if $\mathcal{G}_2^{1,1}\in \textnormal{var}^G(A)$.
\end{lemma}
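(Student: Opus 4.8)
The plan is to mirror the structure of the proof of Lemma \ref{c2g}, establishing the two implications separately. For the forward direction, I would assume $m_{((1,1)_1)} \neq 0$ and extract an explicit subquotient isomorphic to $\mathcal{G}_2^{1,1}$. By Proposition \ref{propos}, the multiplicity being nonzero means the proper highest weight vector $[x_{1,1}, x_{2,1}]$ (the linearized h.w.v. associated to the partition $((1,1)_1)$, as recorded in Table \ref{table:1}) is not a $G$-graded identity of $A$. Hence there exist homogeneous elements $a, b \in J(A)_1$ of degree $1_G$ with $[a,b] \neq 0$ in $A$.

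\textbf{Main construction.} Let $R$ be the $G$-graded subalgebra of $A$ generated by $1_F$, $a$, and $b$; note $R = F + (R \cap J(A))$ since $A = F + J(A)$. The goal is to produce a $T_G$-ideal $I$ of $R$ so that $R/I \cong \mathcal{G}_2^{1,1}$, via $\overline{1_F} \mapsto 1$, $\overline{a} \mapsto e_1$, $\overline{b} \mapsto e_2$, matching the generators and the trivial grading ($a, b, [a,b]$ all sitting in the identity component). Recalling from Lemma 4.5(1) that $\textnormal{Id}^G(\mathcal{G}_2^{1,1}) = \langle [x_{1,1}, x_{2,1}, x_{3,1}],\, [x_{1,1}, x_{2,1}][x_{3,1}, x_{4,1}],\, x_{1,r} \rangle_{T_G}$, the natural candidate is to take $I$ to be generated (as a $T_G$-ideal of $R$) by the images of these identity-generators together with $a^2$, $b^2$ and $a\circ b = ab+ba$, which forces $[a,b]$ to behave like the anticommuting Grassmann generators. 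Since $A$ has quadratic growth, $\gamma_i^G(A) = 0$ for $i \geq 3$, so $\Gamma_3^G \subseteq \textnormal{Id}^G(A)$; this guarantees that all triple commutators and products of two commutators already vanish on $R$, which is exactly what the first two relations of $\mathcal{G}_2^{1,1}$ encode. I expect that after quotienting, $R/I$ is spanned by $\overline{1}, \overline{a}, \overline{b}, \overline{[a,b]}$ and the multiplication table matches $\mathcal{G}_2^{1,1}$ precisely, giving $\mathcal{G}_2^{1,1} \cong R/I \in \textnormal{var}^G(R) \subseteq \textnormal{var}^G(A)$.

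\textbf{The converse and the main obstacle.} The reverse implication is the easy half: if $\mathcal{G}_2^{1,1} \in \textnormal{var}^G(A)$, then by Remark \ref{multibounded} the multiplicities of $A$ dominate those of $\mathcal{G}_2^{1,1}$, and Table \ref{table:2} records that $\mathcal{G}_2^{1,1}$ has $\chi_{((1,1)_1)}$ with multiplicity $1$ in its $n=2$ proper cocharacter; hence $m_{((1,1)_1)} \geq 1 \neq 0$. The step I expect to be the main obstacle is verifying that the chosen $I$ is genuinely a $T_G$-ideal whose quotient is \emph{exactly} $\mathcal{G}_2^{1,1}$ and not a proper further quotient—specifically, confirming that $\overline{[a,b]} \neq 0$ in $R/I$. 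This requires checking that $[a,b]$ does not lie in the $T_G$-ideal generated by $a^2, b^2, a\circ b$ and the higher relations; the hypothesis $[a,b] \not\equiv 0$ on $A$ (from $m_{((1,1)_1)} \neq 0$) must be leveraged carefully to ensure the image survives. Once the spanning and non-degeneracy are settled, the isomorphism is routine.
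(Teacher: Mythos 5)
Your converse direction is correct and matches the paper, but the forward direction has a genuine gap, and it sits exactly at the step you flagged as the ``main obstacle'': for an arbitrary pair $a,b\in J(A)_1$ with $[a,b]\neq 0$, the element $[a,b]$ \emph{can} lie in the ideal of $R$ generated by $a^2$, $b^2$ and $a\circ b$, so the quotient $R/I$ need not be $\mathcal{G}_2^{1,1}$. Concretely, let $A=F\cdot 1+J$ with $J=\operatorname{span}\{a,b,c\}$, trivial $G$-grading, and multiplication $a^2=ab=c$, $ba=b^2=0$, $cJ=Jc=0$. This is associative (all triple products vanish since $J^2=Fc$ and $Jc=cJ=0$), finite-dimensional, unitary of the form $F+J(A)$, of quadratic growth, and $[a,b]=c\neq 0$, so $m_{((1,1)_1)}\neq 0$. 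Here $R=A$ and your ideal $I$ contains $a^2=c=[a,b]$, so $R/I$ is commutative and certainly does not have $\mathcal{G}_2^{1,1}$ in its variety. The root of the problem is that for $g=1$ the polynomials $x_{1,1}^2$ and $x_{1,1}\circ x_{2,1}$ are \emph{not proper}, so---unlike in the case $m_{((1,1)_g)}$ with $g\neq 1$, where the dichotomy on $m_{((2)_g)}$ lets one find $a,b$ with $a^2=b^2=a\circ b=0$ on the nose---no multiplicity controls $a^2$, $b^2$, $a\circ b$, and your hypothesis $[x_{1,1},x_{2,1}]\notin\textnormal{Id}^G(A)$ gives you no leverage over the ideal they generate.

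The lemma is of course still true, and the paper avoids the construction entirely: from $m_{((1,1)_1)}\neq 0$ and Proposition \ref{propos} it gets $[x_{1,1},x_{2,1}]\notin\textnormal{Id}^G(A)$, passes to the trivially graded unitary subalgebra $B=F+J(A)_1$, on which the commutator is still a non-identity, and invokes the ordinary result \cite[Lemma 9]{GLa} to conclude $\mathcal{G}_2^{1,1}\in\textnormal{var}^G(B)\subseteq\textnormal{var}^G(A)$. Alternatively, your approach can be repaired by arguing at the level of $T_G$-ideals rather than with a subquotient, in the style of the paper's proof of the $\mathcal{G}_2^{1,g}$ lemma: since $\dim\Gamma_{(2_1)}=1$ and $[x_{1,1},x_{2,1}]\notin\textnormal{Id}^G(A)$, one has $\Gamma_{(2_1)}\cap\textnormal{Id}^G(A)=0$; every other multihomogeneous proper component in degrees $1$ and $2$ involves a variable of degree $g\neq 1$ and hence consists of identities of $\mathcal{G}_2^{1,1}$ (whose support is $\{1\}$); and $\Gamma_n^G\subseteq\textnormal{Id}^G(A)\cap\textnormal{Id}^G(\mathcal{G}_2^{1,1})$ for $n\geq 3$ by quadratic growth. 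Since both algebras are unitary, this yields $\textnormal{Id}^G(A)\subseteq\textnormal{Id}^G(\mathcal{G}_2^{1,1})$ directly. Either fix is fine; quotienting the subalgebra generated by an arbitrary pair $a,b$ is not.
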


\begin{proof}
    In fact, if $m_{((1,1)_1)}\neq 0$ then by Proposition \ref{propos} we have $[x_{1,1},x_{1,1}]\notin \textnormal{Id}^G(A)$. Since $B=F+J(A)_1$ is a $G$-graded subalgebra of $A$, we have $[x_{1,1},x_{1,1}]\notin \textnormal{Id}^G(B)$ and so, by \cite[Lemma 9]{GLa}, we get 
    $$\mathcal{G}_2^{1,1}\in \textnormal{var}^G(B)\subseteq \textnormal{var}^G(A).$$

    On the other hand, it follows from  Remark \ref{multibounded} and Table \ref{table:2} that if $\mathcal{G}_2^{1,1}\in \textnormal{var}^G(A)$ then we must have $m_{((1,1)_1)}\neq 0$ and so the proof follows. 
\end{proof}

\begin{lemma}
 $m_{((1)_1,(1)_g)}\neq 0$  if and only if $\mathcal{G}_2^{1,g}\in \textnormal{var}^G(A)$.
\end{lemma}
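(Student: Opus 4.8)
The statement is an equivalence, so I would prove the two implications separately. The implication ``$\mathcal{G}_2^{1,g}\in\textnormal{var}^G(A)\Rightarrow m_{((1)_1,(1)_g)}\neq 0$'' is the easy one: by Table \ref{table:2} the algebra $\mathcal{G}_2^{1,g}$ has $(n_1,\dots,n_k)$-cocharacter containing $\chi_{((1)_1)}\otimes\chi_{((1)_g)}$ with multiplicity $1$, so Remark \ref{multibounded} immediately forces $m_{((1)_1,(1)_g)}(A)\geq 1$.

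For the converse, assume $m_{((1)_1,(1)_g)}\neq 0$. By Proposition \ref{propos} the proper highest weight vector $[x_{1,1},x_{2,g}]$ is not an identity of $A$, so there are homogeneous $a\in A_1$ and $b\in A_g$ with $[a,b]\neq 0$. Since $1$ is central and $A=F+J(A)$, I may replace $a$ by its radical component and assume $a\in J(A)_1$; moreover $b\in J(A)_g$ because $g\neq 1$. I would then set $R=\langle 1,a,b\rangle$, a finite-dimensional unitary $G$-graded subalgebra of $A$, so that $R\in\textnormal{var}^G(A)$ and $R$ still has quadratic growth. The core of the argument is to pass to a quotient $Q=R/I$, for a suitable graded two-sided ideal $I$ forcing $a^2=b^2=0$ (and the degree-$1$ part commutative), and to show that $Q$ is $T_G$-equivalent to $\mathcal{G}_2^{1,g}$.

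First I would check that $Q\in\textnormal{var}^G(\mathcal{G}_2^{1,g})$, i.e.\ that $Q$ satisfies the four generators $[x_{1,1},x_{2,1}]$, $[x_{1,1},x_{2,g},x_{3,1}]$, $x_{1,g}x_{2,g}$ and $x_{1,r}$ of $\textnormal{Id}^G(\mathcal{G}_2^{1,g})$. The triple commutator $[x_{1,1},x_{2,g},x_{3,1}]$ is proper of degree $3$, hence is automatically an identity of $Q$ because $Q$ has quadratic growth. The vanishing $x_{1,g}x_{2,g}\equiv 0$ is the computational heart: since a commutator multiplied (on either side) by a variable of nontrivial degree is a proper degree-$3$ identity, one has $[a,b]\,w=w\,[a,b]=0$ for every homogeneous $w$ of degree $\neq 1$, and combining this with $a^2=b^2=0$ one sees that every product of two homogeneous degree-$g$ elements of $R$ reduces modulo $I$ to a multiple of $a^2$ or $b^2$ and therefore dies in $Q$. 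The same reductions collapse the degree-$1$ part of $Q$ to $F\,\overline 1+F\,\overline a$ (so it is commutative) and confine the support of $Q$ to $\{1,g\}$. Granting this, Remark \ref{multibounded} yields $m_\lambda(Q)\leq m_\lambda(\mathcal{G}_2^{1,g})$ for every multipartition $\lambda$.

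The hard part will be the survival of the commutator, namely $[a,b]\notin I$. Here the $G$-grading does the separating work: $[a,b]$ is homogeneous of degree $g$, whereas $a^2$ has degree $1$ and $b^2$ degree $g^2\neq g$, so the degree-$g$ component of $I$ is forced to lie in $J(R)^3$; what remains is to establish $[a,b]\notin J(R)^3$, and this is precisely the point where quadratic growth must be invoked to prevent the commutator from being absorbed into triple products. Once survival is secured, $[a,b]\neq 0$ in $Q$ makes both $m_{((1)_g)}(Q)$ and $m_{((1)_1,(1)_g)}(Q)$ nonzero, which together with the upper bounds of the previous paragraph forces $Q$ and $\mathcal{G}_2^{1,g}$ to have identical proper $(n_1,\dots,n_k)$-cocharacters. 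Since $Q\in\textnormal{var}^G(\mathcal{G}_2^{1,g})$ and both algebras are unitary of quadratic growth, Remark \ref{remark} gives $\textnormal{var}^G(Q)=\textnormal{var}^G(\mathcal{G}_2^{1,g})$, and hence $\mathcal{G}_2^{1,g}\in\textnormal{var}^G(Q)\subseteq\textnormal{var}^G(R)\subseteq\textnormal{var}^G(A)$, completing the proof.
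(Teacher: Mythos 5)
Your first implication is exactly the paper's (Table \ref{table:2} plus Remark \ref{multibounded}), and is fine. The converse, however, contains a genuine gap, and it sits precisely where you flagged it: the survival claim $[a,b]\notin I$ (equivalently $[a,b]\notin J(R)^3$) is not just unproven --- it can fail for \emph{every} admissible choice of witnesses, so the subalgebra-plus-quotient strategy cannot be repaired by ``invoking quadratic growth''. The mechanism that makes this strategy work in Lemmas \ref{c2g} and \ref{xgxh} is that there all elements involved have nontrivial homogeneous degree, so length-$3$ monomials in them are evaluations of \emph{proper} polynomials and are killed by quadratic growth, forcing $I$ to be spanned by $a^2,b^2$. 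Here $a$ has degree $1$, and a monomial such as $a^2b$ has its degree-$1$ letters outside every commutator ($x_{1,1}x_{2,1}x_{3,g}$ is not proper), so quadratic growth says nothing about it. Concretely, let $A=F\langle a,b\rangle/I_0$, where $I_0$ is generated by $ba-ab+a^2b$, $b^2$ and all monomials of length $4$, graded by $a\in A_1$, $b\in A_g$. Then $A=\mathrm{span}_F\{1,a,a^2,a^3,b,ab,a^2b\}$ is a $7$-dimensional unitary $G$-graded algebra with $A_1=F[a]$ commutative, $A_gA_g=0$, $[A_1,A_g]=Fa^2b$, and all proper polynomials of degree $\geq 3$ vanish; hence $c_n^G(A)=1+n+2\binom{n}{2}$ and $m_{((1)_1,(1)_g)}=1$ (in fact $A\sim_{T_G}\mathcal{G}_2^{1,g}$, so the lemma holds trivially for it). Yet every witness pair has the form $a'=\alpha a+\cdots\in J(A)_1$, $b'=\beta b+\cdots\in J(A)_g$ with $[a',b']=\alpha\beta\,a^2b$ and $\alpha\beta\neq 0$, while $a'^2b'=\alpha^2\beta\,a^2b$ modulo $J^4=0$; thus $[a',b']=\alpha^{-1}\,a'^{2}b'\in I$ for every choice, the commutator always dies, and your $Q$ is commutative. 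The moral is that $\mathcal{G}_2^{1,g}$ need not be recoverable as a subquotient of $A$ itself (only of the relatively free algebra of $\mathrm{var}^G(A)$), so no single-witness construction can prove this lemma.

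The paper's proof avoids any construction and compares $T_G$-ideals componentwise, which is the missing idea if you want to salvage your outline. For $n\geq 3$, quadratic growth of both algebras gives $\Gamma_n^G\subseteq \textnormal{Id}^G(A)\cap\textnormal{Id}^G(\mathcal{G}_2^{1,g})$. For $n\leq 2$, every multihomogeneous component of $\Gamma_n^G$ other than $\Gamma_{(1_g)}$ and $\Gamma_{(1_1,1_g)}$ lies in $\textnormal{Id}^G(\mathcal{G}_2^{1,g})$ (being a consequence of $[x_{1,1},x_{2,1}]$, $x_{1,g}x_{2,g}$ and $x_{1,r}$, $r\notin\{1,g\}$); and since $\Gamma_{(1_g)}$ and $\Gamma_{(1_1,1_g)}$ are one-dimensional, spanned by $x_{1,g}$ and $[x_{1,1},x_{2,g}]$, the hypothesis $m_{((1)_1,(1)_g)}\neq 0$ together with Proposition \ref{propos} forces $\Gamma_{(1_g)}\cap\textnormal{Id}^G(A)=\Gamma_{(1_1,1_g)}\cap\textnormal{Id}^G(A)=0$ (note $x_{1,g}\in\textnormal{Id}^G(A)$ would give $A_g=0$ and hence $[x_{1,1},x_{2,g}]\in\textnormal{Id}^G(A)$). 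Therefore every proper multihomogeneous identity of $A$ is an identity of $\mathcal{G}_2^{1,g}$, and since $A$ is unitary and $\textnormal{Id}^G(A)$ is generated by its proper multilinear identities, $\textnormal{Id}^G(A)\subseteq\textnormal{Id}^G(\mathcal{G}_2^{1,g})$, i.e.\ $\mathcal{G}_2^{1,g}\in\mathrm{var}^G(A)$, with no quotient of $R$ needed.
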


\begin{proof}
If $\mathcal{G}_2^{1,g}\in \textnormal{var}^G(A)$, then by Remark~\ref{multibounded} and Table~\ref{table:2}, it is clear that $m_{((1)_1,(1)_g)}\neq 0$. 

Conversely, assume that $m_{((1)_1,(1)_g)}\neq 0$. Since both $A$ and $\mathcal{G}_2^{1,g}$ have quadratic codimension growth, it follows that  
\[
\Gamma_n^G = \Gamma_n^G \cap \textnormal{Id}^G(A) \subseteq \textnormal{Id}^G(\mathcal{G}_2^{1,g}), 
\quad \text{for all } n\geq 3.
\]  Moreover, as long as $\textnormal{supp}(\mathcal{G}_2^{1,g})=\{1,g\}$ and $x_{1,g}\notin \textnormal{Id}^G(A)$, we obtain  
\[
\Gamma_1^G \cap \textnormal{Id}^G(A)
\subseteq \textnormal{span}_F \{x_{1,h}\mid h\in G-\{1,g\}\}
\subseteq \textnormal{Id}^G(\mathcal{G}_2^{1,g}).
\]  

It remains to analyze the proper multilinear and multihomogeneous $G$-graded identities of degree $2$ of $A$.  
Assume that $f\in \Gamma_{n_1, \ldots ,n_k}\cap \textnormal{Id}^G(A)$, with $n_1+\cdots +n_k=2$.  
Since  $[x_{1,1},x_{2,1}]$, $x_{1,g}x_{2,g}$ and $x_{1,r}$, are identities of $\mathcal{G}_2^{1,g}$, for all $r\in G-\{1,g\}$, it follows that either $f\in \textnormal{Id}^G(\mathcal{G}_2^{1,g})$ or $f\in \Gamma_{(1_1,1_g)}$.  

Observe that, by Proposition~\ref{propos}, we have $[x_{1,1},x_{2,g}]\notin \textnormal{Id}^G(A)$, and so the second case cannot occur. Therefore,  $\Gamma_2^G \cap \textnormal{Id}^G(A) \subseteq \textnormal{Id}^G(\mathcal{G}_2^{1,g})$. Since  $\textnormal{Id}^G(A)$ is generated by its multilinear proper identities we have $\textnormal{Id}^G(A)\subseteq \textnormal{Id}^G(\mathcal{G}_2^{1,g})$ and so $\mathcal{G}_2^{1,g}\in \textnormal{var}^G(A).$ 
\end{proof}



\begin{lemma}
     $m_{((2)_g)}\neq 0$ if and only if $C_3^g\in \textnormal{var}^G(A)$. 
\end{lemma}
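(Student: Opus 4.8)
The plan is to prove the equivalence in two directions, following the pattern already established in Lemma~\ref{c2g} and the lemmas that follow it. One implication is immediate from the multiplicity bound of Remark~\ref{multibounded} together with the cocharacter data in Table~\ref{table:2}; the other requires exhibiting a concrete copy of $C_3^g$ inside $\textnormal{var}^G(A)$, which is where the real work lies.

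For the implication $C_3^g\in\textnormal{var}^G(A)\Rightarrow m_{((2)_g)}\neq 0$, I would argue directly from Remark~\ref{multibounded} and Table~\ref{table:2}. For both $|g|=2$ and $|g|>2$, the proper $(n_1,\ldots,n_k)$-cocharacter of $C_3^g$ contains $\chi_{((2)_g)}$ with multiplicity $1$. Hence, writing $\widetilde{m}_{\lambda}$ for the multiplicities of $C_3^g$, the inclusion $C_3^g\in\textnormal{var}^G(A)$ forces $1=\widetilde{m}_{((2)_g)}\leq m_{((2)_g)}$, so $m_{((2)_g)}\neq 0$.

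For the converse, I would first translate the hypothesis into an algebraic statement. By Proposition~\ref{propos} and Table~\ref{table:1}, the condition $m_{((2)_g)}\neq 0$ says exactly that the proper highest weight vector $x_{1,g}\circ x_{2,g}$ is not an identity of $A$; specializing $x_{1,g}=x_{2,g}=a$ and using that $F$ has characteristic zero, this is equivalent to the existence of $a\in A_g$ with $a^2\neq 0$. Since $g\neq 1$ and $A=F+J(A)$, we have $A_g=J(A)_g$, so $a\in J(A)_g$ is nilpotent. Let $R$ be the $G$-graded subalgebra of $A$ generated by $1_F$ and $a$; then $R=F[a]\cong F[t]/(t^n)$ for some $n\geq 3$, where $n\geq 3$ precisely because $a^2\neq 0$.

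Finally, I would form the ideal $I=(a^3)$ of $R$. Since $a^3\in A_{g^3}$ is homogeneous and $R$ is commutative, $I$ is a $G$-graded two-sided ideal, so $R/I$ is a $G$-graded homomorphic image of $R$ and therefore $R/I\in\textnormal{var}^G(R)\subseteq\textnormal{var}^G(A)$. The crux is the identification $R/I\cong C_3^g$: the images $\overline{1_F},\overline{a},\overline{a}^2$ span $R/I$ and are linearly independent (indeed $a^2\notin(a^3)$, since otherwise $a^2(1-ar)=0$ for some $r\in R$ with $1-ar$ invertible, forcing $a^2=0$), and they lie in degrees $1$, $g$, $g^2$ respectively, so the assignment $\overline{1_F}\mapsto I_3$, $\overline{a}\mapsto E_1$, $\overline{a}^2\mapsto E_1^2$ is an isomorphism of $G$-graded algebras onto $C_3^g$. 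I expect this grading verification to be the only delicate point, in particular in the torsion case $|g|=2$, where $g^2=1$ and one must check that $\overline{a}^2$ falls into the identity component exactly as $E_1^2=e_{13}$ does in $C_3^g$. This yields $C_3^g\in\textnormal{var}^G(A)$ and completes the proof.
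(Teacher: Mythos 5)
Your proof is correct and follows essentially the same route as the paper: the easy direction via Remark~\ref{multibounded} and Table~\ref{table:2}, and the converse by extracting $a\in J(A)_g$ with $a^2\neq 0$ from the h.w.v.\ $x_{1,g}\circ x_{2,g}$, forming $R=F\langle 1,a\rangle$, and identifying $R/(a^3)\cong C_3^g$ via $\overline{1}\mapsto I_3$, $\overline{a}\mapsto E_1=e_{12}+e_{23}$, $\overline{a}^2\mapsto e_{13}$, which is exactly the paper's isomorphism. You merely supply details the paper leaves implicit (nilpotency of $a$, gradedness of the ideal, $a^2\notin(a^3)$, and the harmless $|g|=2$ case), so no substantive difference.
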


\begin{proof}
    Assume $m_{((2)_g)}\neq 0$ then $x_{1,g}\circ x_{2,g}\notin \textnormal{Id}^G(A)$. Therefore, there exist $a\in J(A)_g$ such that $a^2\neq 0$. Consider $R$ the $G$-graded subalgebra of $A$ generated by $1$ and $a$. The quotient algebra $R/I$, where $I$ is the $T_G$-ideal generated by $a^3$, is isomorphic to $C_3^g$ as $G$-graded algebras through the isomorphism $\overline{1}\mapsto e_{11}+e_{22} +e_{33},$ $ \overline{a}\mapsto e_{12}+e_{23}$ and $\overline{a^2}\mapsto e_{13}.$ Therefore, $C_3^g\cong R/I\in \textnormal{var}^G(R)\subseteq \textnormal{var}^G(A).$

    If $C_3^g\in \textnormal{var}^G(A)$, by Remark \ref{multibounded} and Table \ref{table:2}, it is clear that $m_{((2)_g)}\neq 0$. 
\end{proof}

\begin{lemma}
 $m_{((1,1)_g)}\neq 0$ if and only if $\mathcal{G}_2^{g,g}\in \textnormal{var}^G(A)$. 
\end{lemma}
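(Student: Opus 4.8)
The plan is to follow the argument used for $\mathcal{G}_2^{1,g}$ and establish the two inclusions of $T_G$-ideals separately. The forward implication is immediate: if $\mathcal{G}_2^{g,g}\in\textnormal{var}^G(A)$, then Remark~\ref{multibounded} together with Table~\ref{table:2}, where $\mathcal{G}_2^{g,g}$ carries the irreducible $\chi_{((1,1)_g)}$ with multiplicity $1$, forces $m_{((1,1)_g)}\neq 0$. For the converse I would start from Proposition~\ref{propos}: the assumption $m_{((1,1)_g)}\neq 0$ means that the proper h.w.v. $[x_{1,g},x_{2,g}]$ is not an identity of $A$, so there exist $a,b\in A_g$ with $ab\neq ba$. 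In particular $A_g\neq 0$ and, since $\{ab,ba\}\subseteq A_{g^2}$ and $ab-ba\neq 0$, also $A_{g^2}\neq 0$. The goal is then to prove $\textnormal{Id}^G(A)\subseteq\textnormal{Id}^G(\mathcal{G}_2^{g,g})$, which is exactly $\mathcal{G}_2^{g,g}\in\textnormal{var}^G(A)$.

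Since $\textnormal{Id}^G(A)$ is generated by its proper multilinear polynomials, and both $A$ and $\mathcal{G}_2^{g,g}$ have quadratic growth, it suffices to compare the spaces $\Gamma_{n_1,\ldots,n_k}\cap\textnormal{Id}^G(A)$ and $\Gamma_{n_1,\ldots,n_k}\cap\textnormal{Id}^G(\mathcal{G}_2^{g,g})$ for all multidegrees with $n=n_1+\cdots+n_k\in\{1,2\}$; for $n\geq 3$ one has $\Gamma_n^G\subseteq\textnormal{Id}^G(A)\cap\textnormal{Id}^G(\mathcal{G}_2^{g,g})$ and there is nothing to check. In degree $1$ the only proper components are the one-dimensional spaces $\Gamma_{(1_{g'})}$, $g'\neq 1$, and $\Gamma_{(1_{g'})}\cap\textnormal{Id}^G(\mathcal{G}_2^{g,g})=0$ exactly when $g'\in\textnormal{supp}(\mathcal{G}_2^{g,g})$. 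As $\textnormal{supp}(\mathcal{G}_2^{g,g})\subseteq\{1,g,g^2\}$ and we have just checked $A_g\neq 0$ and $A_{g^2}\neq 0$ (the latter being vacuous when $|g|=2$, where $g^2=1$ and $A_1\neq 0$ because $A$ is unitary), each degree-$1$ identity of $A$ is an identity of $\mathcal{G}_2^{g,g}$.

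The heart of the argument is degree $2$. Reading Table~\ref{table:2}, the only nonzero degree-$2$ cocharacter of $\mathcal{G}_2^{g,g}$ is $\chi_{((1,1)_g)}$; equivalently, $\mathcal{G}_2^{g,g}$ satisfies \emph{every} proper multihomogeneous polynomial of degree $2$ except those in the antisymmetric component $\chi_{((1,1)_g)}$ of $\Gamma_{(2_g)}$. Concretely $[x_{1,1},x_{2,1}]$, the commutators $[x_{1,1},x_{2,g'}]$, the products and commutators $x_{1,g'}x_{2,h'},[x_{1,g'},x_{2,h'}]$ with $g'\neq h'$, and the Jordan product $x_{1,g}\circ x_{2,g}$ all vanish on $\mathcal{G}_2^{g,g}$, so for every degree-$2$ multidegree other than $(2_g)$ one has $\Gamma_{n_1,\ldots,n_k}\subseteq\textnormal{Id}^G(\mathcal{G}_2^{g,g})$ and the inclusion is automatic. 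It remains to treat $\Gamma_{(2_g)}\cong\chi_{((2)_g)}\oplus\chi_{((1,1)_g)}$. Here $\textnormal{Id}^G(\mathcal{G}_2^{g,g})\cap\Gamma_{(2_g)}$ is precisely the symmetric summand $\chi_{((2)_g)}$. Since $[x_{1,g},x_{2,g}]\notin\textnormal{Id}^G(A)$, the submodule $\textnormal{Id}^G(A)\cap\Gamma_{(2_g)}$ cannot contain the antisymmetric summand; as $\Gamma_{(2_g)}$ is a direct sum of two non-isomorphic irreducibles, this submodule is contained in $\chi_{((2)_g)}$, hence in $\textnormal{Id}^G(\mathcal{G}_2^{g,g})$. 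Collecting all multidegrees yields $\textnormal{Id}^G(A)\subseteq\textnormal{Id}^G(\mathcal{G}_2^{g,g})$, and the lemma follows.

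I expect the main obstacle to be the degree-$2$ bookkeeping: one must verify, component by component and uniformly in the order $|g|$, that $\mathcal{G}_2^{g,g}$ annihilates every proper polynomial outside $\chi_{((1,1)_g)}$, and that the single relevant component $\Gamma_{(2_g)}$ is governed entirely by the hypothesis $m_{((1,1)_g)}\neq 0$ via the splitting into symmetric and antisymmetric parts. The only other point requiring care is the verification that $A_{g^2}\neq 0$ when $|g|>2$, which is exactly what guarantees the degree-$1$ inclusion at $g'=g^2$.
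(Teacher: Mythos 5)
Your proof is correct, but it takes a genuinely different---and more uniform---route than the paper's. The paper splits on $m_{((2)_g)}$: when $m_{((2)_g)}=0$, Proposition~\ref{propos} yields $a,b\in J(A)_g$ with $[a,b]\neq 0$ and $a^2=b^2=ab+ba=0$, and the graded subalgebra generated by $1_F,a,b$ is shown to be isomorphic to $\mathcal{G}_2^{g,g}$ inside $A$; when $m_{((2)_g)}\neq 0$ the paper proves the stronger claim $C_3^g\oplus\mathcal{G}_2^{g,g}\in \textnormal{var}^G(A)$, splitting further into $|g|=2$ (quoting Lemma~5.11 of \cite{Cota2} for the induced $\mathbb{Z}_2$-grading on $F+J(A)_1+J(A)_g$) and $|g|>2$ (an identity comparison in which $\alpha\, x_{1,g}x_{2,g}+\beta\,[x_{1,g},x_{2,g}]\in \textnormal{Id}^G(A)$ is killed via the endomorphism $x_{1,g},x_{2,g}\mapsto x_{1,g}$ together with the two non-identity hypotheses). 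You instead prove $\textnormal{Id}^G(A)\subseteq \textnormal{Id}^G(\mathcal{G}_2^{g,g})$ in one stroke, with no case distinction on $m_{((2)_g)}$ and no external citation: degrees $n\geq 3$ are disposed of by quadratic growth, degree $1$ by the observation that $[a,b]\neq 0$ forces $A_g\neq 0$ and $A_{g^2}\neq 0$ (vacuous at $g^2$ when $|g|=2$), and the only nontrivial degree-$2$ component $\Gamma_{(2_g)}$ by the fact that it is a multiplicity-free $S_2$-module, $\chi_{((2)_g)}\oplus \chi_{((1,1)_g)}$, so the submodule $\Gamma_{(2_g)}\cap \textnormal{Id}^G(A)$, which cannot contain the antisymmetric summand by hypothesis, must lie in the symmetric summand, i.e.\ in $\Gamma_{(2_g)}\cap \textnormal{Id}^G(\mathcal{G}_2^{g,g})$; your componentwise checks that $\mathcal{G}_2^{g,g}$ satisfies every other degree-$2$ proper component are accurate for both $|g|=2$ and $|g|>2$. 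This multiplicity-free argument replaces the paper's explicit evaluations and radical-element constructions, and as a bonus it never uses the standing assumption $A=F+J(A)$, only unitarity and quadratic growth. The one thing the paper's proof delivers beyond the lemma is the intermediate statement $C_3^g\oplus\mathcal{G}_2^{g,g}\in \textnormal{var}^G(A)$ when both multiplicities are nonzero, which feeds the subsequent corollary; but this is recovered trivially from your version, since $B_1,B_2\in \textnormal{var}^G(A)$ gives $\textnormal{Id}^G(A)\subseteq \textnormal{Id}^G(B_1)\cap \textnormal{Id}^G(B_2)=\textnormal{Id}^G(B_1\oplus B_2)$, so nothing downstream is lost.
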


\begin{proof}
 We begin by observing that if $m_{((1,1)_g)} \neq 0$ and $m_{((2)_g)} = 0$, then, by Proposition \ref{propos} and Table \ref{table:1}, there exist $a, b \in J(A)_g$ such that $[a,b] \neq 0$ and $a^2 = b^2 = ab + ba = 0.$

Let $R$ be the $G$-graded algebra generated by $1_F$, $a$, and $b$. Note that $R \cong \mathcal{G}_2^{g,g}$ via the isomorphism of $G$-graded algebras defined by
\[
1_F \mapsto 1, \quad a \mapsto e_1, \quad b \mapsto e_2, \quad ab \mapsto e_1 e_2.
\]

Therefore, we may now assume that $m_{((1,1)_g)} \neq 0$ and $m_{((2)_g)} \neq 0$. In other words, both $[x_{1,g},x_{2,g}]$ and $x_{1,g} \circ x_{2,g}$ are not identities of $A$. We claim that in this case $C_3^g \oplus \mathcal{G}_2^{g,g} \in \textnormal{var}^G(A).$

If $|g| = 2$, then $B = F + J(A)_1 + J(A)_g$ is a $G$-graded subalgebra of $A$ with an induced $\mathbb{Z}_2$-grading. By \cite[Lemma 5.11]{Cota2}, if $[x_{1,g},x_{2,g}]$ and $x_{1,g} \circ x_{2,g}$ are not identities of $B$, then $C_3^g \oplus \mathcal{G}_2^{g,g} \in \textnormal{var}^G(B) \subseteq \textnormal{var}^G(A)$ and the result follows.

Now assume $|g| > 2$. Since both $A$ and $C_3^g \oplus \mathcal{G}_2^{g,g}$ have quadratic codimension growth, it follows that for all $n \geq 3$,
\[
\Gamma_n^G = \Gamma_n^G \cap \textnormal{Id}^G(A) = \Gamma_n^G \cap \textnormal{Id}^G(C_3^g \oplus \mathcal{G}_2^{g,g}).
\]
Furthermore, since $x_{1,g}$ and $x_{1,g^2}$ are not identities of $A$ and
$\textnormal{supp}(C_3^g \oplus \mathcal{G}_2^{g,g}) = \{1, g, g^2\}$, we have
\[
\Gamma_{1}^G \cap \textnormal{Id}^G(A) \subseteq \Gamma_{1}^G \cap \textnormal{Id}^G(C_3^g \oplus \mathcal{G}_2^{g,g}).
\]

Thus, it remains to analyze the multilinear and multihomogeneous $G$-graded identities of degree $2$ of $A$. Let $f \in \Gamma_2^G \cap \textnormal{Id}^G(A)$ be a multihomogeneous identity. Since the polynomials
\[
[x_{1,1},x_{2,1}], \quad [x_{1,1},x_{2,g}], \quad x_{1,g^2}x_{2,g}, \quad x_{1,g}x_{2,g^2}, \quad x_{1,g^2}x_{2,g^2}, \quad x_{1,h}\] are identities of   $C_3^g \oplus \mathcal{G}_2^{g,g}$,
for all $h \in G - \{1, g, g^2\}$, we may assume that $f \in \Gamma_{(2_g)}$; otherwise, it is immediate that $f \in \textnormal{Id}^G(C_3^g \oplus \mathcal{G}_2^{g,g})$. Therefore, we may write
\[
f = \alpha \, x_{1,g}x_{2,g} + \beta \, [x_{1,g},x_{2,g}].
\]
Consider the $G$-graded endomorphism sending $x_{1,g}, x_{2,g} \mapsto x_{1,g}$. Then $\alpha \, x_{1,g}^2 \equiv 0 \quad \text{in } A.$ Since $x_{1,g} \circ x_{2,g} \notin \textnormal{Id}^G(A)$, we conclude that $\alpha = 0$. Similarly, as $[x_{1,g},x_{2,g}] \notin \textnormal{Id}^G(A)$, we must have $\beta = 0$. Hence, it follows that
$$
\Gamma_n^G \cap \textnormal{Id}^G(A) \subseteq \Gamma_n^G \cap \textnormal{Id}^G(C_3^g \oplus \mathcal{G}_2^{g,g}),$$ for all $n$. Since $A$ is unitary, we conclude that $C_3^g \oplus \mathcal{G}_2^{g,g} \in \textnormal{var}^G(A)$ in this case.

Finally, note that
\[
\textnormal{Id}^G(A) \subseteq \textnormal{Id}^G(C_3^g \oplus \mathcal{G}_2^{g,g}) = \textnormal{Id}^G(C_3^g) \cap \textnormal{Id}^G(\mathcal{G}_2^{g,g}) \subseteq \textnormal{Id}^G(\mathcal{G}_2^{g,g}),
\]
and therefore $\mathcal{G}_2^{g,g} \in \textnormal{var}^G(A).$
\end{proof}

As a consequence of the previous results we have the following. 

\begin{corollary}
    $m_{((2)_g)}, m_{((1,1)_g)}\neq 0$ if and only if $\mathcal{G}_2^{g,g}\oplus C_3^g\in \textnormal{var}^G(A).$
\end{corollary}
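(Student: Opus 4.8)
The plan is to obtain the corollary as a direct bookkeeping consequence of the two immediately preceding lemmas, which characterize each multiplicity in isolation. For the forward implication, I would assume $m_{((2)_g)}\neq 0$ and $m_{((1,1)_g)}\neq 0$ simultaneously. The lemma on $m_{((2)_g)}$ then yields $C_3^g\in\textnormal{var}^G(A)$, and the lemma on $m_{((1,1)_g)}$ yields $\mathcal{G}_2^{g,g}\in\textnormal{var}^G(A)$. To conclude that the \emph{direct sum} lies in the variety, I would invoke that a $G$-graded variety is closed under finite direct sums: since $\textnormal{Id}^G(\mathcal{G}_2^{g,g}\oplus C_3^g)=\textnormal{Id}^G(\mathcal{G}_2^{g,g})\cap\textnormal{Id}^G(C_3^g)$ and both intersected $T_G$-ideals contain $\textnormal{Id}^G(A)$, their intersection does as well, giving $\mathcal{G}_2^{g,g}\oplus C_3^g\in\textnormal{var}^G(A)$. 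Alternatively, and more economically, I can simply cite the construction inside the proof of the $m_{((1,1)_g)}$ lemma, which already places $C_3^g\oplus\mathcal{G}_2^{g,g}$ in $\textnormal{var}^G(A)$ in exactly the regime where both multiplicities are nonzero.

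For the reverse implication, I would start from $\mathcal{G}_2^{g,g}\oplus C_3^g\in\textnormal{var}^G(A)$ and read off the proper $(n_1,\ldots,n_k)$-cocharacters of the two summands from Table \ref{table:2}. At degree $n=2$ the summand $C_3^g$ contributes the irreducible $\chi_{((2)_g)}$ and the summand $\mathcal{G}_2^{g,g}$ contributes $\chi_{((1,1)_g)}$; since the cocharacter of a direct sum is assembled from those of the summands, both $m_{((2)_g)}$ and $m_{((1,1)_g)}$ are nonzero in the decomposition for $\mathcal{G}_2^{g,g}\oplus C_3^g$. Applying Remark \ref{multibounded} with $B=\mathcal{G}_2^{g,g}\oplus C_3^g$, each of these multiplicities is dominated by the corresponding multiplicity of $A$, which forces $m_{((2)_g)},\, m_{((1,1)_g)}\neq 0$ for $A$.

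I do not anticipate a genuine obstacle: the statement is a clean corollary of the two lemmas together with Remark \ref{multibounded}. The only points deserving a word of care are the two structural facts I rely on implicitly, namely that the $T_G$-ideal of a direct sum is the intersection of the $T_G$-ideals of the summands (so the variety is closed under direct sums) and that, dually, an irreducible character appearing in one summand persists in the direct sum. Both are standard, and together they render the two directions essentially immediate.
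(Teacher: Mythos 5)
Your proposal is correct and matches the paper's intended argument: the corollary is stated there without proof as an immediate consequence of the two preceding lemmas, whose proofs already contain exactly your ingredients (indeed, the case of the $m_{((1,1)_g)}$ lemma where both multiplicities are nonzero explicitly establishes $C_3^g\oplus\mathcal{G}_2^{g,g}\in\textnormal{var}^G(A)$, and the converse is the same use of Remark \ref{multibounded} with Table \ref{table:2}). Your observation that closure of $\textnormal{var}^G(A)$ under direct sums, via $\textnormal{Id}^G(B_1\oplus B_2)=\textnormal{Id}^G(B_1)\cap\textnormal{Id}^G(B_2)$, lets one alternatively assemble the forward direction from the two lemmas separately is valid and consistent with the paper.
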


Now we investigate the cases where the multiplicities $m_{((1)_g(1)_h)}$ is nonzero. Therefore, in the following we assume that $A=F+J(A)$ is a unitary $G$-graded algebra with quadratic codimension growth such that $x_{1,g}x_{2,h}\notin \textnormal{Id}^G(A)$, for distinct elements $g,h\in G-\{1\}$. 

\begin{lemma} \label{xgxh}
  If $ \alpha x_{1,g}x_{2,h}- x_{2,h}x_{1,g}\equiv 0$ on $A$, for some $\alpha \in F$, then either $\alpha =0$ and $K_7^{g,h}\in \textnormal{var}^G(A)$  or $gh=hg$ and $W_{\alpha}^{g,h}\in \textnormal{var}^G(A).$  
\end{lemma}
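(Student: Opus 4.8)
The plan is to distinguish two cases according to whether $g$ and $h$ commute in $G$, and in each case to realize the relevant minimal algebra as an explicit subquotient of $A$, exactly in the spirit of the proofs of Lemmas~\ref{c2g} and~\ref{g211}.

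First I would read off the consequences of the hypothesis by evaluating it. Substituting $x_{1,g}\mapsto a\in A_g$ and $x_{2,h}\mapsto b\in A_h$ gives $\alpha ab=ba$ for all homogeneous $a,b$, where $ab\in A_{gh}$ and $ba\in A_{hg}$. If $gh\neq hg$ these lie in distinct homogeneous components, so the relation forces $ba=0$ and $\alpha ab=0$ for all $a,b$; since $x_{1,g}x_{2,h}\notin\textnormal{Id}^{G}(A)$ there is a choice with $ab\neq 0$, and hence $\alpha=0$ and $A_hA_g=0$. Thus $\alpha\neq 0$ can occur only when $gh=hg$, which accounts for the shape of the statement: the first alternative corresponds to $\alpha=0$ (target $K_7^{g,h}$) and the second to $\alpha\neq 0$ with $gh=hg$ (target $W_\alpha^{g,h}$).

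The key structural input is quadratic growth. Since $\gamma_n^{G}(A)=0$ for $n\geq 3$, any product of three variables of nontrivial degree lies in $\Gamma_3^{G}\subseteq\textnormal{Id}^{G}(A)$, so $A_{g_1}A_{g_2}A_{g_3}=0$ whenever $g_1,g_2,g_3\neq 1$. Fixing $a\in A_g$ and $b\in A_h$ with $ab\neq 0$ (note $A_g=J(A)_g$ and $A_h=J(A)_h$ since $g,h\neq 1$), this makes $R=\langle 1,a,b\rangle$ very rigid: every monomial in $a,b$ of length $\geq 3$ vanishes, so $R$ is spanned by $1,a,b,a^2,ab,ba,b^2$, and the $T_G$-ideal $I$ generated by $a^2,b^2$ collapses to $I=Fa^2+Fb^2$ (graded endomorphic images and further multiplications produce only length-$\geq 3$ monomials). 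Hence $R/I$ is spanned by $\overline 1,\overline a,\overline b,\overline{ab},\overline{ba}$.

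The crux, which I expect to be the main obstacle, is the dimension count for $R/I$. Since $g\neq h$ one has $gh\neq g^2$ and $gh\neq h^2$, so $ab$ (degree $gh$) is not a combination of $a^2$ (degree $g^2$) and $b^2$ (degree $h^2$); thus $\overline{ab}\neq 0$. The same homogeneity bookkeeping, using $ab\neq 0$ to exclude the degenerate possibilities $a\in Fb^2$ or $b\in Fa^2$ (each would force the length-$3$ product $ab$ to vanish), shows that $\overline 1,\overline a,\overline b,\overline{ab}$ are linearly independent, so $\dim_F R/I=4$. For $\alpha=0$ one has $\overline{ba}=0$, and checking products shows that $\overline 1\mapsto e_{11}+e_{22}+e_{33}$, $\overline a\mapsto e_{12}$, $\overline b\mapsto e_{23}$, $\overline{ab}\mapsto e_{13}$ is a $G$-graded isomorphism $R/I\cong K_7^{g,h}$ (with the case $gh=1$ giving the variant $K_7^{g,g^{-1}}$). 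For $\alpha\neq 0$ one has $\overline{ba}=\alpha\,\overline{ab}$, and $\overline 1\mapsto e_{11}+\cdots+e_{44}$, $\overline a\mapsto e_{12}+e_{34}$, $\overline b\mapsto \alpha e_{13}+e_{24}$, $\overline{ab}\mapsto e_{14}$ is a $G$-graded isomorphism $R/I\cong W_\alpha^{g,h}$. In either case $R/I\in\textnormal{var}^{G}(R)\subseteq\textnormal{var}^{G}(A)$, which yields the claim; everything beyond the dimension count reduces to homogeneity of the grading and the vanishing of length-$3$ products coming from quadratic growth.
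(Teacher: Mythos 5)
Your proposal is correct and follows essentially the same route as the paper: choose $a\in J(A)_g$, $b\in J(A)_h$ with $ab\neq 0$ and $ba=\alpha ab$, form the subalgebra $R=\langle 1,a,b\rangle$, use quadratic growth ($\Gamma_n^G\subseteq \textnormal{Id}^G(A)$ for $n\geq 3$) to kill length-$3$ products so that the ideal generated by $a^2,b^2$ is just $Fa^2+Fb^2$, and identify $R/I$ with $K_7^{g,h}$ (when $\alpha=0$) or $W_\alpha^{g,h}$ (when $\alpha\neq 0$, forcing $gh=hg$) via the same explicit maps. You are in fact slightly more careful than the paper in two spots it glosses over: the homogeneity argument showing $\alpha\neq 0$ forces $gh=hg$, and the exclusion of the degenerate coincidences (e.g.\ $a\in Fb^2$) in the claim $a,b,ab\notin I$.
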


\begin{proof}
    Let $a\in J(A)_g$ and $b\in J(A)_h$ such that $ab\neq 0$ and $ba= \alpha ab$. Consider $R$ the $G$-graded subalgebra of $A$ generated by $1_F$, $a$ and $b$ and let $I$ be the $T_G$-ideal of $R$ generated by $a^2$ and $b^2$. 

Since $\Gamma_n^G\subseteq \textnormal{Id}^G(A)$, for all $n\geq 3$, $I$ is linearly generated by $a^2$ and $b^2$. Since $g\neq h\neq 1$ then $ab\notin I$. Therefore, $a,b,ab\notin I$ and thus $R/I$ is a $G$-graded algebra satisfying $\overline{a}^2=\overline{b}^2=\alpha \overline{a}\overline{b}- \overline{b}\overline{a}=0$.

If $\alpha=0$ then $R/I\cong K_7^{g,h}$ through the map $$\overline{1_F}\mapsto e_{11}+e_{22}+e_{33},\quad  \overline{a}\mapsto e_{12},\quad \overline{b}\mapsto e_{23},\quad   \overline{ab}\mapsto e_{13}.  $$

Otherwise, we have $gh=hg$ and $R/I\cong W_{\alpha}^{g,h}$ through the map $$\overline{1_F}\mapsto e_{11}+\cdots +e_{44},\quad \overline{a}\mapsto e_{12}+e_{34},\quad \overline{b}\mapsto \alpha e_{13}+e_{24},\quad  \overline{ab}\mapsto e_{14}.  $$

Since $R/I \in \textnormal{var}^G(R)\subseteq \textnormal{var}^G(A)$ the result follows.

\end{proof}

\begin{lemma}
If $m_{((1)_g(1)_h)}=1$ then either $K_7^{h,g}$ or $K_7^{g,h}$ or $\mathcal{G}_2^{g,h}$ or $W_\alpha ^{g,h}\in \textnormal{var}^G(A)$, $\alpha \in F- \{0,-1\}$. 
\end{lemma}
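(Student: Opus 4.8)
The plan is to translate the representation-theoretic hypothesis $m_{((1)_g(1)_h)}=1$ into a single scalar relation between the two proper highest weight vectors and then feed that relation into Lemma~\ref{xgxh}. First I would recall from Table~\ref{table:1} that the proper h.w.v.s associated to the multipartition $((1)_g,(1)_h)$ are spanned by $x_{1,g}x_{2,h}$ and $[x_{1,g},x_{2,h}]$, equivalently by $x_{1,g}x_{2,h}$ and $x_{2,h}x_{1,g}$. By Proposition~\ref{propos}, the multiplicity $m_{((1)_g(1)_h)}$ equals the maximal number of these h.w.v.s that are linearly independent modulo $\textnormal{Id}^G(A)$; hence $m_{((1)_g(1)_h)}=1$ says exactly that the image of this two-dimensional space in $\Gamma_{(1_g,1_h)}(A)$ is one-dimensional.

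Next, using the standing assumption $x_{1,g}x_{2,h}\notin\textnormal{Id}^G(A)$, the class of $x_{1,g}x_{2,h}$ is a nonzero generator of that one-dimensional quotient, so $x_{2,h}x_{1,g}$ must be congruent to a scalar multiple of it. That is, there exists a unique $\alpha\in F$ with $\alpha\, x_{1,g}x_{2,h}-x_{2,h}x_{1,g}\equiv 0$ on $A$. This is precisely the hypothesis of Lemma~\ref{xgxh}, which I would then apply directly, choosing $a\in J(A)_g$ and $b\in J(A)_h$ with $ab\neq 0$ (which exist because $x_{1,g}x_{2,h}$ is not an identity).

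Lemma~\ref{xgxh} then yields the case analysis. If $\alpha=0$, the product $x_{2,h}x_{1,g}$ is an identity while $x_{1,g}x_{2,h}$ is not, and the lemma gives $K_7^{g,h}\in\textnormal{var}^G(A)$ (the symmetric situation, in which the surviving product is $x_{2,h}x_{1,g}$, produces $K_7^{h,g}$ after interchanging the roles of $g$ and $h$). If $\alpha\neq 0$, Lemma~\ref{xgxh} forces $gh=hg$ and $W_\alpha^{g,h}\in\textnormal{var}^G(A)$. Here I would isolate the value $\alpha=-1$: by the isomorphism $\varphi\colon\mathcal{G}_2^{g,h}\to W_{-1}^{g,h}$ recorded after Lemma~\ref{Walpha}, membership $W_{-1}^{g,h}\in\textnormal{var}^G(A)$ is the same as $\mathcal{G}_2^{g,h}\in\textnormal{var}^G(A)$; for every other value we retain $W_\alpha^{g,h}$ with $\alpha\in F-\{0,-1\}$. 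Collecting the cases produces the four alternatives in the statement.

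The step I expect to carry the real content is the passage from the multiplicity condition to the scalar relation: one must be certain that $m_{((1)_g(1)_h)}=1$ together with $x_{1,g}x_{2,h}\notin\textnormal{Id}^G(A)$ forces precisely the one-parameter family $\alpha\, x_{1,g}x_{2,h}-x_{2,h}x_{1,g}$ of relations, with no freedom remaining. The rest is bookkeeping that matches each value of $\alpha$ to the correct minimal algebra, the only delicate point being the identification of the degenerate value $\alpha=-1$ with $\mathcal{G}_2^{g,h}$ rather than with a genuine $W_\alpha^{g,h}$.
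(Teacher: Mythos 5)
Your proposal is correct and follows essentially the same route as the paper: both translate $m_{((1)_g(1)_h)}=1$ via Proposition~\ref{propos} into a single relation $\alpha\,x_{1,g}x_{2,h}-x_{2,h}x_{1,g}\equiv 0$, feed it into Lemma~\ref{xgxh}, and identify $\alpha=-1$ with $\mathcal{G}_2^{g,h}$ through the isomorphism $W_{-1}^{g,h}\cong\mathcal{G}_2^{g,h}$. The only cosmetic difference is that the paper parametrizes the identity as $x_{1,g}x_{2,h}+\beta[x_{1,g},x_{2,h}]$ and computes $\alpha=\frac{1+\beta}{\beta}$, while you normalize directly on the coefficient of $x_{2,h}x_{1,g}$, and your parenthetical treatment of the degenerate case $x_{1,g}x_{2,h}\equiv 0$ (yielding $K_7^{h,g}$ by swapping $g$ and $h$) matches the paper's Case~1.
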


\begin{proof}

Assume that $m_{((1)_g(1)_h)} = 1$. Then, by Proposition~\ref{propos}, we must analyze three cases:

\smallskip
\noindent
\textbf{Case 1.} If $x_{1,g}x_{2,h} \equiv 0$ and $[x_{1,g},x_{2,h}] \not\equiv 0$ on $A$, then, by the previous lemma, we obtain $K_7^{h,g} \in \textnormal{var}^G(A)$.

\smallskip
\noindent
\textbf{Case 2.} If $x_{1,g}x_{2,h} \not\equiv 0$ and $[x_{1,g},x_{2,h}] \equiv 0$ on $A$, then Lemma~\ref{xgxh} yields $W_1^{g,h} \in \textnormal{var}^G(A)$.

\smallskip
\noindent
\textbf{Case 3.} Suppose there exists a linear combination $f = x_{1,g}x_{2,h} + \beta \,[x_{1,g},x_{2,h}] \in \textnormal{Id}^G(A)$,
with $\beta \neq 0$. In this case,
\[
\frac{1+\beta}{\beta} \, x_{1,g}x_{2,h} - x_{2,h}x_{1,g} \equiv 0
\] on $A$. Note that if $\beta=-1$ then by Lemma~\ref{xgxh} we have $K_7^{g,h}\in \textnormal{var}^G(A)$. Otherwise, we must have $gh=hg$ and, by Lemma~\ref{xgxh}, we have $W_{\alpha}^{g,h} \in \textnormal{var}^G(A)$, where $\alpha = \frac{1+\beta}{\beta}$. In particular, if $\beta = -\frac{1}{2}$, then $\alpha = -1$ and $\mathcal{G}_2^{g,h} \in \textnormal{var}^G(A)$.

\end{proof}

\begin{lemma} \label{ultimo}
 $m_{((1)_g(1)_h)}=2$ if and only if $K_7^{g,h}\oplus \mathcal{G}_2^{g,h} \in \textnormal{var}^G(A)$. 
\end{lemma}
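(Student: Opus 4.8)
The plan is to establish both implications by directly comparing the $T_G$-ideals of $A$ and $B:=K_7^{g,h}\oplus\mathcal{G}_2^{g,h}$, exploiting that both algebras are unitary of quadratic growth, so that $\Gamma_n^G\subseteq\textnormal{Id}^G(A)\cap\textnormal{Id}^G(B)$ for every $n\geq 3$ and only the proper components in degrees $1$ and $2$ must be matched. I would use the explicit presentation of $\textnormal{Id}^G(B)$ together with the cocharacter computation recorded in the earlier lemma on $K_7^{g,h}\oplus\mathcal{G}_2^{g,h}$, namely that the only nonzero proper cocharacter of $B$ in degree $2$ is $2\chi_{((1)_g)}\otimes\chi_{((1)_h)}$, so that $\widetilde m_{((1)_g,(1)_h)}=2$ for $B$.

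The backward implication is immediate. If $B\in\textnormal{var}^G(A)$, then Remark~\ref{multibounded} yields $\widetilde m_{((1)_g,(1)_h)}\leq m_{((1)_g,(1)_h)}$, and since $\widetilde m_{((1)_g,(1)_h)}=2$ while $m_{((1)_g,(1)_h)}\leq 2$ by \eqref{mult}, we conclude $m_{((1)_g,(1)_h)}=2$.

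For the forward implication, I would suppose $m_{((1)_g,(1)_h)}=2$. Because $\dim\Gamma_{(1_g,1_h)}=2$, this is the same as $\Gamma_{(1_g,1_h)}\cap\textnormal{Id}^G(A)=0$; in particular $x_{1,g}x_{2,h}\notin\textnormal{Id}^G(A)$, so there exist $a\in J(A)_g$ and $b\in J(A)_h$ with $ab\neq 0$, which forces $\{g,h,gh\}\subseteq\textnormal{supp}(A)$. Since $A$ is unitary, $\textnormal{Id}^G(A)$ is generated by its proper multilinear identities, and it suffices to check that every multihomogeneous proper identity of $A$ of degree $1$ or $2$ belongs to $\textnormal{Id}^G(B)$. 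In degree $1$ the proper identities of $A$ are spanned by the variables $x_{1,q}$ with $q\notin\textnormal{supp}(A)$; since $\textnormal{supp}(B)=\{1,g,h,gh\}$ and $\{g,h,gh\}\subseteq\textnormal{supp}(A)$, each such $x_{1,q}$ is an identity of $B$.

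The degree-$2$ step is the core of the argument and the main bookkeeping obstacle. I would run over all multihomogeneous components $\Gamma_{(n_1,\ldots,n_k)}$ with $n_1+\cdots+n_k=2$ and show that each one \emph{other than} $\Gamma_{(1_g,1_h)}$ is contained entirely in $\textnormal{Id}^G(B)$, so that any identity of $A$ lying in it is automatically an identity of $B$. Concretely: $\Gamma_{(2_1)}$ and $\Gamma_{(1_1,1_s)}$ are spanned by $[x_{1,1},x_{2,1}]$ and $[x_{1,1},x_{2,s}]$, which are among the generators of $\textnormal{Id}^G(B)$ (or vanish when $s\notin\textnormal{supp}(B)$); a component $\Gamma_{(2_s)}$, $s\neq 1$, is annihilated either by the generator $x_{1,s}x_{2,s}$ when $s\in\{g,h,gh\}$, in which case both orderings vanish by symmetry, or because $s\notin\textnormal{supp}(B)$; and a component $\Gamma_{(1_s,1_t)}$ with $\{s,t\}\neq\{g,h\}$ is annihilated by the generators $x_{1,gh}x_{2,s'}$ and $x_{1,s'}x_{2,gh}$ when $\{s,t\}\subseteq\{g,h,gh\}$, and otherwise because one of $s,t$ lies outside $\textnormal{supp}(B)$. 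This leaves only $\Gamma_{(1_g,1_h)}$, where the hypothesis $\Gamma_{(1_g,1_h)}\cap\textnormal{Id}^G(A)=0$ means $A$ has no nonzero proper identity, so nothing needs to be verified. Hence $\textnormal{Id}^G(A)\subseteq\textnormal{Id}^G(B)$, i.e.\ $B\in\textnormal{var}^G(A)$. The one delicate point I would flag is the degenerate case $gh=1$, where $B=K_7^{g,g^{-1}}\oplus\mathcal{G}_2^{g,g^{-1}}$: one repeats the identical component sweep using the presentation of $\textnormal{Id}^G(K_7^{g,g^{-1}}\oplus\mathcal{G}_2^{g,g^{-1}})$ from the earlier lemma, the overlap component now being $\Gamma_{(1_g,1_{g^{-1}})}$, and the argument goes through verbatim.
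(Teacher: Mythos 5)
Your proof is correct and follows essentially the same route as the paper: both directions match, with the converse via Remark~\ref{multibounded} and the dimension bound $\dim\Gamma_{(1_g,1_h)}=2$, and the forward direction via the reduction to proper identities of degree at most $2$ (quadratic growth killing $\Gamma_n^G$ for $n\geq 3$), the support argument in degree $1$, and the degree-$2$ component sweep showing everything outside $\Gamma_{(1_g,1_h)}$ already lies in $\textnormal{Id}^G(K_7^{g,h}\oplus\mathcal{G}_2^{g,h})$ while $m_{((1)_g,(1)_h)}=2$ forces $\Gamma_{(1_g,1_h)}\cap\textnormal{Id}^G(A)=0$. Your explicit handling of the $gh=1$ case mirrors the paper's, which treats it the same way with the corresponding presentation of $\textnormal{Id}^G(K_7^{u,u^{-1}}\oplus\mathcal{G}_2^{u,u^{-1}})$.
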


\begin{proof}

First, note that since $A$ and $K_7^{g,h}\oplus \mathcal{G}_2^{g,h}$ have quadratic growth, then, for all $n\geq 3$, we have $$\Gamma_n^G=\Gamma_n^G\cap \textnormal{Id}^G(A)=\Gamma_n^G\cap \textnormal{Id}^G(K_7^{g,h}\oplus \mathcal{G}_2^{g,h}).$$ Moreover, since $x_{1,s}\notin \textnormal{Id}^G(A)$, for all $s\in \{1,g,h,gh\}$ and $\mbox{supp}(K_7^{g,h}\oplus \mathcal{G}_2^{g,h})=\{1,g,h,gh\}$ then we also have $$\Gamma_1^G\cap \textnormal{Id}^G(A)\subseteq \textnormal{Id}^G(K_7^{g,h}\oplus \mathcal{G}_2^{g,h}).$$ Therefore, in order to prove that $K_7^{g,h}\oplus \mathcal{G}_2^{g,h} \in \textnormal{var}^G(A)$ we just need to analyze the multilinear and multihomogeneous identities of degree $2$ of $A$.  

Assume that $gh\neq 1$ and consider $f\in \Gamma_2^G\cap \textnormal{Id}^G(A)$ a multihomogeneous identity. Since the polynomials
$$[x_{1,1},x_{2,1}],\quad  [x_{1,1},x_{2,s}],\quad  x_{1,s}x_{2,s}, \quad x_{1,gh}x_{2,s}, \quad x_{1,s}x_{2,gh}\quad \mbox{ and }\quad x_{1,r}$$ are identities of $K_7^{g,h}\oplus \mathcal{G}_2^{g,h}$, for all $s\in \{g,h,gh\}$ and $r\in G-\{1,g,h,gh\}$, then $f\equiv 0$ on $K_7^{g,h}\oplus \mathcal{G}_2^{g,h}$ or  $f=\alpha x_{1,g} x_{2,h}+\beta [x_{1,g},x_{2,h}]$. Since $m_{((1)_g(1)_h)}=2$, by Proposition \ref{propos}, there are no $\alpha , \beta \in F$, $(\alpha, \beta)\neq (0,0)$, such that $\alpha x_{1,g}x_{2,h}+\beta [x_{1,g},x_{2,h}]\equiv 0 $ on $A$ and so we must have $f\in \textnormal{Id}^G(K_7^{g,h}\oplus \mathcal{G}_2^{g,h}).$

If $gh=1$ we note that the polynomials
$$[x_{1,1},x_{2,1}],\quad  [x_{1,1},x_{2,s}],\quad  x_{1,s}x_{2,s} \quad  \mbox{ and }\quad x_{1,r}$$ are identities of $K_7^{g,h}\oplus \mathcal{G}_2^{g,h}$, for all $s\in \{g,h\}$ and $r\in G-\{1,g,h\}$, then the proof follows similarly to the previous case.

Therefore, we have proved that $$\Gamma_n^G\cap \textnormal{Id}^G(A)=\Gamma_n^G\cap \textnormal{Id}^G(K_7^{g,h}\oplus \mathcal{G}_2^{g,h}),$$ for all $n$. Since $A$ is unitary, then we have $\textnormal{Id}^G(A)\subseteq \textnormal{Id}^G(K_7^{g,h}\oplus \mathcal{G}_2^{g,h})$ and so $K_7^{g,h}\oplus \mathcal{G}_2^{g,h}\in \textnormal{var}^G(A)$.

Conversely, if $K_7^{g,h}\oplus \mathcal{G}_2^{g,h} \in \textnormal{var}^G(A)$ then $2\leq m_{((1)_g(1)_h)}\leq \dim \Gamma^G_{((1)_g(1)_h)}=2.$
    
\end{proof}

Given an algebra \(A\), consider the direct product 
\(\widetilde{A} = A \times F\) equipped with the multiplication
\[
(a_1,\alpha_1)\,(a_2,\alpha_2)
= \big(a_1 a_2 + \alpha_1 a_2 + \alpha_2 a_1,\; \alpha_1\alpha_2 \big).
\]
This construction yields the unitary algebra obtained from \(A\) 
by adjoining an identity element. If \(A\) is a \(G\)-graded algebra,  then \(\widetilde{A}\) inherits a natural \(G\)-grading by defining
\[
\widetilde{A}_1 = (A_1, F)
\qquad\text{and}\qquad
\widetilde{A}_g = (A_g, \{0\}),
\quad\text{for all } g \neq 1.
\]

\begin{lemma} \label{btildeinvar}
    Let $A$ be a unitary $G$-graded algebra. If $B\in \textnormal{var}^
    G(A)$ then $\widetilde{B}\in \textnormal{var}^G(A)$.
\end{lemma}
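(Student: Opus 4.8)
The plan is to exploit the fact that, since $A$ is unitary, $\textnormal{Id}^G(A)$ is generated as a $T_G$-ideal by its multilinear proper graded polynomials. Because $\textnormal{Id}^G(\widetilde{B})$ is itself a $T_G$-ideal, in order to conclude $\textnormal{Id}^G(A)\subseteq\textnormal{Id}^G(\widetilde{B})$ (that is, $\widetilde{B}\in\textnormal{var}^G(A)$) it suffices to show that every multilinear proper polynomial $f\in\textnormal{Id}^G(A)$ is a $G$-graded identity of $\widetilde{B}$. Here the hypothesis $B\in\textnormal{var}^G(A)$ is used through the inclusion $\textnormal{Id}^G(A)\subseteq\textnormal{Id}^G(B)$, so in particular such an $f$ satisfies $f\equiv 0$ on $B$.

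The key structural observation is that the adjoined identity $1=(0,1)$ is central in $\widetilde{B}$ and acts as a scalar, and that in the grading of $\widetilde{B}$ the extra scalar part occurs only in degree $1_G$, namely $\widetilde{B}_1=(B_1,F)$ while $\widetilde{B}_g=(B_g,\{0\})$ for $g\neq 1$. Consider then an arbitrary admissible evaluation of $f$ on $\widetilde{B}$: each variable $x_{i,g}$ with $g\neq 1$ is sent to an element of $B_g$ carrying no scalar component, while each variable of degree $1_G$ is sent to some $b+\alpha 1$ with $b\in B_1$ and $\alpha\in F$. Note that the prefix factors $x_{i_1,g_2}\cdots x_{j_\ell,g_s}$ of a proper monomial involve only variables of nontrivial degree, which therefore contribute no scalar part.

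The crucial point is that, by definition of a proper polynomial, variables of degree $1_G$ appear only inside the (long) Lie commutators $w_1,\ldots,w_m$. Since $1$ is central, one has $[\,\cdots,\alpha 1,\cdots\,]=0$, so by the multilinearity of each commutator in its arguments the contribution of the scalar component of every degree-$1_G$ substitution vanishes. Consequently the value of $f$ under the given admissible evaluation on $\widetilde{B}$ coincides with its value under the corresponding evaluation on $B$ obtained by discarding the scalar components. As $f\in\textnormal{Id}^G(B)$, this value is $0$, and hence $f\equiv 0$ on $\widetilde{B}$.

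Since every multilinear proper identity of $A$ is thus a $G$-graded identity of $\widetilde{B}$, and such polynomials generate $\textnormal{Id}^G(A)$ as a $T_G$-ideal, we obtain $\textnormal{Id}^G(A)\subseteq\textnormal{Id}^G(\widetilde{B})$, i.e.\ $\widetilde{B}\in\textnormal{var}^G(A)$. The only step requiring genuine care is the reduction in the previous paragraph, showing that the scalar multiples of the identity drop out of every commutator; this is where the restriction that degree-$1_G$ variables occur only inside commutators is essential, and once it is in place the remainder of the argument is immediate.
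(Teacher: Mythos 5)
Your proof is correct and follows essentially the same route as the paper's: reduce to multilinear proper identities of $A$ (using that $A$ is unitary and $\textnormal{Id}^G(A)\subseteq \textnormal{Id}^G(B)$), then observe that any admissible evaluation of such a polynomial on $\widetilde{B}$ coincides with the corresponding evaluation on $B$ with scalar components discarded. In fact you spell out the justification of this key equality (degree-$1_G$ variables occur only inside commutators, where the central adjoined unit drops out) more explicitly than the paper, which simply asserts it from $f$ being proper and multilinear.
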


\begin{proof}
 Let $f(x_{1,g_{i_1}}, \ldots, x_{n,g_{i_n}}) \in \textnormal{Id}^{G}(A)$ be a multilinear \(G\)-graded polynomial of degree \(n\). Since both 
\(A\) and \(\widetilde{A}\) are unitary \(G\)-graded algebras, we may assume, 
without loss of generality, that \(f\) is a proper \(G\)-graded polynomial. 
Since \(f\) is proper and multilinear, any evaluation on elements 
\((a_{q,g_{i_q}},\alpha_q)\in \widetilde{A}_{g_{i_q}}\), \(1\le q\le n\), 
satisfies
\[
f((a_{1,g_{i_1}},\alpha_1),\ldots,(a_{n,g_{i_n}},\alpha_n))
=
f((a_{1,g_{i_1}},0),\ldots,(a_{n,g_{i_n}},0)).
\]
Therefore the evaluation reduces to values in \(A\), and the conclusion follows 
from the fact that \(f\equiv 0\) on \(A\).

\end{proof}

\begin{lemma} \label{unitariornilpotentorcummu}
    Let $A$ be a unitary finite-dimensional $G$-graded algebra of polynomial codimension growth without unity. Then either $
    \widetilde{A}$ has exponential growth or $A\sim_{T_{G}}N$ or $A\sim_{T_{G}} C\oplus N$, where $N$ is a nilpotent $G$-graded algebra and $C$ is a commutative non-nilpotent algebra with trivial grading.  
\end{lemma}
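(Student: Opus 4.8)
The plan is to let the growth of $\widetilde{A}$ govern a dichotomy, and to combine La Mattina's structure theorem (Theorem~\ref{35}) with a Wedderburn--Malcev analysis of the idempotents. First I would translate the hypothesis on $\widetilde{A}$ into a statement about proper polynomials. Every proper $G$-graded polynomial is a combination of products of commutators in which degree-$1$ variables occur only inside commutators, and the adjoined unit $\widehat{1}$ is central and killed by every commutator; hence an admissible evaluation of such a polynomial on $\widetilde{A}=A+F\widehat{1}$ reduces to one on $A$, so $\textnormal{Id}^G(\widetilde{A})$ and $\textnormal{Id}^G(A)$ have the same proper part. Writing $\gamma_i^G(A)=\dim\big(\Gamma_i^G/(\Gamma_i^G\cap\textnormal{Id}^G(A))\big)$, this gives $\gamma_i^G(\widetilde{A})=\gamma_i^G(A)$, and since $\widetilde{A}$ is unitary, equation \eqref{codecodcentral} yields
\[
c_n^G(\widetilde{A})=\sum_{i=0}^n \binom{n}{i}\gamma_i^G(A).
\]
Thus $\widetilde{A}$ is polynomially bounded exactly when $\gamma_i^G(A)=0$ for all large $i$. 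If this fails we land in the first alternative, so from now on I assume $\gamma_i^G(A)=0$ for $i$ large.

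Next I would expose the idempotent structure. By Theorem~\ref{35} we may replace $A$ by $B=B_1\oplus\cdots\oplus B_m$ with $\dim_F(B_i/J(B_i))\le 1$, and the graded Wedderburn--Malcev decomposition gives $B_i=B_i^{ss}\oplus J(B_i)$ with $\dim_F B_i^{ss}\le 1$. The summands with $B_i^{ss}=0$ are nilpotent and are collected into $N$. For the rest, $B_i=Fe_i\oplus J(B_i)$ with $e_i^2=e_i\neq 0$ homogeneous; if $e_i\in (B_i)_t$ then $e_i^2=e_i$ forces $t^2=t$, so $t=1$ and $e_i\in(B_i)_1$.

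Then I would run the Peirce decomposition against the adjoined unit to remove off-diagonal radical. Put $f_i=\widehat{1}-e_i$ in $\widetilde{B_i}$, so $e_i,f_i$ are orthogonal idempotents of degree $1$. If $e_iJ(B_i)f_i\neq 0$ or $f_iJ(B_i)e_i\neq 0$, then $e_i$, $f_i$ and a nonzero homogeneous element $c$ of that Peirce block satisfy $c^2=0$ and span a $G$-graded subalgebra isomorphic to a (possibly nontrivially graded) copy of $UT_2$. Since $A$ embeds in $\widetilde{A}$ we have $\textnormal{var}^G(A)\subseteq\textnormal{var}^G(\widetilde{A})$, whence $B_i\in\textnormal{var}^G(\widetilde{A})$ and, by Lemma~\ref{btildeinvar}, $\widetilde{B_i}\in\textnormal{var}^G(\widetilde{A})$; as the graded codimensions of $UT_2$ grow exponentially (because $c_n(UT_2)\le c_n^G(UT_2)$), this contradicts $\gamma_i^G(A)=0$ eventually. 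Hence $J(B_i)=e_iJ(B_i)e_i\oplus f_iJ(B_i)f_i$, the two blocks annihilate each other, and $B_i=C_i\oplus N_i$ with $C_i=Fe_i+e_iJ(B_i)e_i$ unitary and $N_i=f_iJ(B_i)f_i$ nilpotent.

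It remains to identify each non-nilpotent block $C_i$, and this is the step I expect to be the main obstacle. The goal is to show that, up to $T_G$-equivalence, $C_i$ reduces to a commutative algebra concentrated in degree $1$, with the remaining homogeneous nilpotent directions transferred into $N$. The intended mechanism is that a nonzero commutator inside $e_iJ(B_i)e_i$, or a nonzero homogeneous component of degree $g\neq 1$ sitting ``diagonally'' on $e_i$, produces in $\widetilde{C_i}$ a $G$-graded subalgebra of Grassmann- or graded-$UT_2$-type whose unitization has exponential graded codimension, again violating the standing assumption. Granting this, the $C_i$ are commutative and trivially graded, their direct sum is a single commutative trivially-graded algebra $C$, and with $N$ the sum of all $N_i$ together with the nilpotent $B_j$ we obtain $A\sim_{T_G} C\oplus N$. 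The delicate points, and where I expect the real work to be, are to isolate \emph{exactly} which graded configurations in $e_iJ(B_i)e_i$ force exponential growth of $\widetilde{A}$ and to verify that the ``transfer to $N$'' of the nilpotent homogeneous directions is compatible with $T_G$-equivalence.
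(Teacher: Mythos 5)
Your steps 1--3 are correct, and in one respect they are actually more careful than the paper's own proof. The paper tests the orthogonality condition of \cite[Theorem~2.2]{AljGiam} on the family $\bar A_1,\ldots,\bar A_l,\bar F$ inside $\widetilde{A}$, but $\bar F=F(0,1)$ is \emph{not} orthogonal to the $\bar A_t$ (indeed $(e_t,0)(0,1)=(e_t,0)$), so this is not a Wedderburn--Malcev decomposition; the correct complementary idempotent is precisely your $f_i=\hat 1-e_i$, and your explicit graded copy of $UT_2$ in a nonzero off-diagonal Peirce block, together with $c_n(UT_2)\leq c_n^G(UT_2)$ and Lemma~\ref{btildeinvar}, gives the exponential lower bound honestly. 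So up to the decomposition $B_i=C_i\oplus N_i$ with $C_i=Fe_i+e_iJ(B_i)e_i$ unitary and $N_i=f_iJ(B_i)f_i$ nilpotent, your argument is sound, and it repairs a genuine slip in the paper.

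The gap is exactly where you flag it, and the mechanism you propose there is provably false. A nonzero homogeneous element of degree $g\neq 1$ in $e_iJ(B_i)e_i$ with square zero gives $C_i\supseteq C_2^g$, and a nonzero commutator in $e_iJ(B_i)e_i$ can give $C_i\cong \mathcal{G}_2^{1,1}$; these are \emph{unitary} algebras of polynomial graded codimension growth (the paper's own Section~4 catalogue), and for a unitary algebra $U$ one has $\widetilde{U}\cong U\oplus F$ as $G$-graded algebras, so no exponential growth can ever be extracted from $\widetilde{C_i}$. Worse, the step cannot be closed by any other route, because the stated conclusion fails for such diagonal blocks: take $B=C_3^g\oplus N$ with $N\neq 0$ nilpotent of index $t$. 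This $B$ is finite-dimensional, has no unit, has polynomial growth, and $\widetilde{B}$ has polynomial growth (both Peirce blocks $e\bar J f$ and $f\bar J e$ with $f=\hat 1-e$ vanish, since $J(C_3^g)$ multiplies unitally on $e$ and annihilates $N$); yet $B\not\sim_{T_G}N'$ for nilpotent $N'$, and $B\not\sim_{T_G}C\oplus N'$ for any commutative trivially graded $C$ and nilpotent $N'$, since the monomial $x_{1,g}x_{2,1}\cdots x_{t',1}$ (with $t'$ the index of $N'$) is an identity of every such $C\oplus N'$ but not of $C_3^g$ (evaluate $x_{1,g}\mapsto E_1$, $x_{j,1}\mapsto I_3$). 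This same configuration is what the paper's proof jumps over: its claim that $A_iJ\neq 0$ forces exponential growth of $\widetilde{A}$ fails whenever $A_iJ\subseteq e_iJe_i$, because then $\bar A_i\bar J f=0$ for the correct idempotent $f$. What your steps 1--3 actually establish is the weaker trichotomy: either $\widetilde{A}$ has exponential growth, or $A\sim_{T_G}N$, or $A\sim_{T_G}U\oplus N$ with $U$ a unitary $G$-graded algebra of type $F+J$ and $N$ nilpotent --- and that weaker statement is all that the application in Theorem~\ref{quadraticgrowth} (``we may assume that $A_i$ is unitary'') requires; the commutativity and triviality of the grading asserted in the lemma cannot be derived.
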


\begin{proof}

Since \(A\) is a finite-dimensional \(G\)-graded algebra with polynomial 
codimension growth, by \cite[Theorem~9]{Valenti} and 
\cite[Theorem~2.2]{AljGiam} we may assume that
\[
A = A_1 \oplus \cdots \oplus A_l \,+ J,
\]
where \(J\) denotes the Jacobson radical of \(A\), each 
\(A_i \cong F\) endowed with the trivial grading, and 
\(A_t J A_m = 0\) for all \(t \neq m\). Moreover, if the decomposition 
\(A = A_1 \oplus \cdots \oplus A_l \oplus J\) 
is a direct sum of ideals, then \(A \sim_{T_G} C \oplus N\) or 
\(A \sim_{T_G} N\).

Otherwise, there exists \(1 \leq i \leq l\) such that \(A_i J \neq 0\). 
Considering the extension 
\(\widetilde{A} = (A_1 \oplus \cdots \oplus A_l \,+\, J) \times F\), 
we obtain
\[
\widetilde{A}
=
\bar{A}_1 \oplus \cdots \oplus \bar{A}_l 
\oplus \bar{F} \,+\, \bar{J},
\] where $\bar{A}_i = \{(a_i,0) \mid a_i \in A_i\}, $ $
\bar{F} = \{(0,\alpha) \mid \alpha \in F\} \cong F$ and $
\bar{J} = \{(j,0) \mid j \in J\}.$

For \(t \neq m\) we have \(\bar{A}_t\,\bar{J}\,\bar{A}_m = \{0\}\). 
However, since
\[
\bar{A}_i\,\bar{J}\,\bar{F}
= \bar{A}_i\,\bar{J} \neq \{0\},
\]
\cite[Theorem~2.2]{AljGiam} implies that \(\widetilde{A}\) has 
exponential codimension growth.    
\end{proof}

\begin{corollary} 
\label{unitarunilpotentcomm}
    Let $A$ be a unitary $G$-graded algebra of polynomial codimension growth. If $B\in \textnormal{var}(A)$ is a finite-dimensional algebra then either $B$ is unitary or $B\sim_{T_G}N$ or $B\sim_{T_G}C\oplus N$, where $C$ is a commutative non-nilpotent $G$-graded algebra with trivial grading and $N$ is a nilpotent $G$-graded algebra.
\end{corollary}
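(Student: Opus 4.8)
The plan is to reduce the statement for $B$ to the already-proved Lemma~\ref{unitariornilpotentorcummu} for $\widetilde{B}$. First I would observe that $B$ is a finite-dimensional algebra lying in $\textnormal{var}^G(A)$, so $\textnormal{Id}^G(A)\subseteq \textnormal{Id}^G(B)$ and therefore $B$ also has polynomial codimension growth. If $B$ already has a unity, there is nothing further to prove, so I would assume from now on that $B$ is \emph{without} a unit element.

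Next I would pass to the unitalization $\widetilde{B}=B\times F$, which is a finite-dimensional unitary $G$-graded algebra by the construction preceding Lemma~\ref{btildeinvar}. The key point is to show $\widetilde{B}$ stays inside the variety: by Lemma~\ref{btildeinvar}, since $A$ is unitary and $B\in\textnormal{var}^G(A)$, we have $\widetilde{B}\in\textnormal{var}^G(A)$. Consequently $\widetilde{B}$ inherits polynomial codimension growth from $A$, so in particular $\widetilde{B}$ does \emph{not} have exponential growth. Now I would apply Lemma~\ref{unitariornilpotentorcummu} to the finite-dimensional $G$-graded algebra $B$ (which is without unity, as assumed): the exponential-growth alternative for $\widetilde{B}$ is excluded by the previous sentence, so the lemma forces $B\sim_{T_G}N$ or $B\sim_{T_G}C\oplus N$, with $N$ a nilpotent $G$-graded algebra and $C$ a commutative non-nilpotent $G$-graded algebra with trivial grading. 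This is exactly the desired trichotomy.

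The subtle point I would be careful about is the hypothesis bookkeeping in Lemma~\ref{unitariornilpotentorcummu}: that lemma is stated for an algebra ``without unity'' whose unitalization $\widetilde{A}$ is the object whose growth is tested, and its conclusion is a dichotomy conditional on $\widetilde{A}$ not having exponential growth. So the whole argument hinges on establishing that $\widetilde{B}$ has polynomial (equivalently, non-exponential) growth, and this is precisely what Lemma~\ref{btildeinvar} delivers by embedding $\widetilde{B}$ into the polynomially bounded variety $\textnormal{var}^G(A)$. The main obstacle, such as it is, is making sure that $B$ genuinely falls under the scope of Lemma~\ref{unitariornilpotentorcummu}: one needs $B$ finite-dimensional (given), $G$-graded (inherited), of polynomial codimension growth (inherited from $A$ via $\textnormal{Id}^G(A)\subseteq\textnormal{Id}^G(B)$), and the case $B$ unitary separated off at the start. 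Once these hypotheses are verified, the corollary follows immediately by combining Lemma~\ref{btildeinvar} with Lemma~\ref{unitariornilpotentorcummu}.
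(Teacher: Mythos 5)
Your proposal is correct and follows essentially the same route as the paper: reduce to the nonunitary case, note that $B$ inherits polynomial growth from $\textnormal{Id}^G(A)\subseteq\textnormal{Id}^G(B)$, invoke Lemma~\ref{btildeinvar} to place $\widetilde{B}$ in $\textnormal{var}^G(A)$ (hence excluding exponential growth of $\widetilde{B}$), and conclude via Lemma~\ref{unitariornilpotentorcummu}. Your explicit bookkeeping of the hypotheses of Lemma~\ref{unitariornilpotentorcummu} is a fair clarification of a step the paper leaves implicit, but it is the same argument.
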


\begin{proof}
Let \(B\) be a nonunitary \(G\)-graded algebra such that 
\(B \in \textnormal{var}^G(A)\). Since \(A\) has polynomial codimension growth,  it follows that \(B\) also has polynomial growth. Moreover, by  Lemma~\ref{btildeinvar} we have \(\widetilde{B} \in \textnormal{var}^G(A)\), and  therefore \(\widetilde{B}\) has polynomial growth as well. The proof 
now follows from Lemma~\ref{unitariornilpotentorcummu}.
\end{proof}

\black

Finally, we can prove the main result of this paper.

\begin{theorem} \label{quadraticgrowth}
    Let $\textnormal{var}^G(A)$ be a unitary $G$-graded variety over a field $F$ of characteristic zero with quadratic codimension growth. Then, $A$ is $T_G$-equivalent to a finite direct sum of $G$-graded algebras in the set 
$$\{F, \, C_2^q,C_3^q, K_7^{p,q} ,\mathcal{G}_2^{g,h}, W_\alpha ^{u,v} \mid \alpha \in F^*,gh=hg, uv=vu, p\neq q\neq 1, u\neq v\neq 1  \},$$
 where at least one algebra in the set $\{C_3^s, K_7^{p,q}, \mathcal{G}_2^{g,h}, W_\alpha^{u,v} \mid s \neq 1\}$ appears as a direct summand.
\end{theorem}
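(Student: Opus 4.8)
The plan is to reconstruct $A$, up to $T_G$-equivalence, as a direct sum of the minimal algebras dictated by the nonzero multiplicities of its proper $(n_1,\ldots,n_k)$-cocharacters, and then to invoke Remark~\ref{remark}. First I would reduce to the standing case $A=F+J(A)$. By Theorem~\ref{35} we may assume $A\sim_{T_G}B_1\oplus\cdots\oplus B_m$ with each $B_i$ finite-dimensional and $\dim_F(B_i/J(B_i))\le 1$; as $A$ has polynomial growth, each $B_i\in\textnormal{var}^G(A)$, and Corollary~\ref{unitarunilpotentcomm} classifies the non-unitary blocks as $N$ or $C\oplus N$. Setting $A'=F\cdot 1+\bigoplus_i J(B_i)$ with block-diagonal multiplication on the radical, Lemma~\ref{btildeinvar} gives $A'\in\textnormal{var}^G(A)$ (it embeds into $\bigoplus_i\widetilde{B_i}$), while each $B_i\in\textnormal{var}^G(A')$ since it is built from $F$ and the pieces $J(B_i)$, which are quotients of $A'$. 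Hence $A\sim_{T_G}A'$ and $A'=F+J(A')$, so I may assume $A=F+J(A)$.

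Because $A$ has quadratic growth, Theorem~\ref{codproper} gives $\gamma_i^G(A)=0$ for $i\ge 3$, so $\textnormal{var}^G(A)$ is determined by the multiplicities in \eqref{mult} at $n=1,2$, constrained to $m_\lambda\in\{0,1\}$ for $\lambda\in S$ and $m_{((1)_g,(1)_h)}\in\{0,1,2\}$. For each nonzero multiplicity I would use the matching lemma to place a minimal algebra inside $\textnormal{var}^G(A)$: $C_2^g$ by Lemma~\ref{c2g}, $\mathcal{G}_2^{1,1}$ by Lemma~\ref{g211}, and $\mathcal{G}_2^{1,g}$, $C_3^g$, $\mathcal{G}_2^{g,g}$, or $C_3^g\oplus\mathcal{G}_2^{g,g}$ (when both $m_{((2)_g)}$ and $m_{((1,1)_g)}$ are nonzero) by the intermediate lemmas. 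For the mixed multiplicity I split on its value: when $m_{((1)_g,(1)_h)}=1$ I apply Lemma~\ref{xgxh} to pick the correct one of $K_7^{h,g}$, $K_7^{g,h}$, $\mathcal{G}_2^{g,h}$, $W_\alpha^{g,h}$ from the linear relation $\alpha x_{1,g}x_{2,h}-x_{2,h}x_{1,g}\equiv 0$ holding on $A$; when it equals $2$ I use Lemma~\ref{ultimo} to place $K_7^{g,h}\oplus\mathcal{G}_2^{g,h}$. Let $B$ be the direct sum of all minimal algebras so produced; finiteness of $\dim_F A$ makes this a finite sum.

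It remains to check that $B$ and $A$ share all cocharacter multiplicities. Every summand of $B$ lies in $\textnormal{var}^G(A)$, so $B\in\textnormal{var}^G(A)$, and Remark~\ref{multibounded} bounds each multiplicity of $B$ above by that of $A$; conversely each summand was chosen to realize one nonzero multiplicity of $A$ at its exact value (notably $K_7^{g,h}\oplus\mathcal{G}_2^{g,h}$ realizes the value $2$ via Lemma~\ref{ultimo}), which gives the reverse bound, so the multiplicities agree. Remark~\ref{remark} then yields $A\sim_{T_G}B$. Finally, genuine quadratic growth forces $\gamma_2^G(A)\ne 0$, hence some degree-$2$ multiplicity is nonzero, so by the above one of $C_3^s$, $K_7^{p,q}$, $\mathcal{G}_2^{g,h}$, $W_\alpha^{u,v}$ appears as a summand, as required.

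The principal difficulty lies not in this final assembly --- which is pinned down by the two-sided multiplicity bound above --- but in the two reductions feeding it. First, the passage to the form $F+J(A)$ must stay within polynomial growth: the unitalization of a non-unitary block could in principle have exponential growth (Lemma~\ref{unitariornilpotentorcummu}), and this is excluded here only because each $\widetilde{B_i}$ remains in $\textnormal{var}^G(A)$ by Lemma~\ref{btildeinvar}. Second, in the mixed case one must correctly extract, from the single scalar $\alpha$ in the relation satisfied by $A$, which of the four algebras realizes $m_{((1)_g,(1)_h)}=1$ --- precisely the delicate step performed in Lemma~\ref{xgxh}.
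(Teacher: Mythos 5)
Your proposal is correct in substance, and its core coincides with the paper's argument: the dictionary between the nonzero multiplicities in \eqref{mult} and the minimal algebras (Lemmas~\ref{c2g}--\ref{ultimo}, with Lemma~\ref{xgxh} resolving the mixed case), the two-sided multiplicity bound obtained from Remark~\ref{multibounded} together with Table~\ref{table:2}, the appeal to Remark~\ref{remark} to conclude $T_G$-equivalence, and the observation that quadratic growth forces $\gamma_2^G\neq 0$ and hence a summand from $\{C_3^s, K_7^{p,q}, \mathcal{G}_2^{g,h}, W_\alpha^{u,v}\}$. Where you genuinely diverge is the preliminary reduction. The paper keeps the blocks $A_1,\ldots,A_m$ of Theorem~\ref{35} separate: it runs the multiplicity analysis on each unitary block of quadratic growth (unitarity being supplied by Corollary~\ref{unitarunilpotentcomm}), and disposes of the sub-quadratic blocks by citing the classification of at most linear growth in \cite{Plamen}, which replaces them by sums of $F$ and $C_2^g$. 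You instead merge all blocks into a single algebra $A'=F\cdot 1+\bigoplus_i J(B_i)$ and prove $A\sim_{T_G}A'$, which buys a one-shot application of the multiplicity machinery and avoids the external citation, at the cost of verifying both inclusions of the merge. Your inclusion $A'\in \textnormal{var}^G(A)$, via the graded embedding of $A'$ into $\bigoplus_i \widetilde{B_i}$ and Lemma~\ref{btildeinvar}, is sound; the converse direction, however, is stated loosely: the pieces $J(B_i)$ are (non-unital, graded) \emph{subalgebras} of $A'$, not quotients, and the phrase ``built from $F$ and the pieces $J(B_i)$'' does not by itself yield $\textnormal{Id}^G(A')\subseteq \textnormal{Id}^G(B_i)$ when $B_i$ carries an idempotent that is not a unit. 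The repair is to invoke Corollary~\ref{unitarunilpotentcomm} \emph{before} forming $A'$, replacing each non-unitary block by its $T_G$-equivalent form $N$ or $C\oplus N$: then $N=J(N)$ literally sits inside the radical of $A'$ as a subalgebra, while $C$, being commutative and trivially graded, lies in $\textnormal{var}^G(F)\subseteq \textnormal{var}^G(A')$, and for a unitary block $F\cdot 1_{A'}+J(B_i)\subseteq A'$ is graded-isomorphic to $B_i$. With this adjustment your merged reduction is valid and arguably more self-contained than the paper's, whose per-block treatment is more modular but leans on the known linear-growth classification.
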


\begin{proof}
Since $A$ is a unitary algebra with polynomial growth, Theorem~\ref{35} implies that
\[
A \sim_{T_G} A_1 \oplus \cdots \oplus A_m,
\] where each $A_i$ is a finite-dimensional $G$-graded algebra, either nilpotent or of the form $F + J(A_i)$. Since $A$ has quadratic codimension growth, the direct sum $A_1 \oplus \cdots \oplus A_m$ also has quadratic codimension growth. Hence, by Lemma~\ref{unitarunilpotentcomm}, the algebra $A_1 \oplus \cdots \oplus A_m$ is unitary. In particular, $A_t = F + J(A_t)$ for every $t = 1, \ldots, m$, and there exists an index $i$ such that $A_i$ has quadratic growth of the sequence of $G$-graded codimensions.  In this case, by Corollary \ref{unitarunilpotentcomm}, we may assume that $A_i$ is unitary and so we have $\Gamma_n^G \subseteq \mathrm{Id}^G(A_i)$, for all $n \ge 3$.

We now analyze the possible values of the multiplicities appearing in the proper $(n_1, \ldots, n_k)$-cocharacters of $A_i$, where $n = n_1 + \cdots + n_k$ and $n \in \{1,2\}$. According to Table~\ref{table:1}, these multiplicities satisfy
\[
m_{((1,1)_1)} = 0, \qquad 0 \le m_{((1)_1,(1)_g)} \le 1, \qquad 0 \le m_{((2)_g)} \le 1, \qquad 0 \le m_{((1,1)_g)} \le 1, \qquad 0 \le m_{((1)_g,(1)_h)} \le 2,
\]
for all $g,h \in G - \{1\}$ with $g \neq h$.

For $n = 1$ and $g \in G - \{1\}$, Lemma~\ref{c2g} ensures that either $m_{((1)_g)} = 0$ or $m_{((1)_g)} = 1$ and $C_2^g \in \mathrm{var}^G(A_i)$.

When $n = 2$, Lemmas~\ref{g211}--\ref{ultimo} determine all possible cases. If $m_{((1,1)_1)} = 1$, then $\mathcal{G}_2^{1,1} \in \mathrm{var}^G(A_i)$; if $m_{((1)_1,(1)_g)} = 1$, then $\mathcal{G}_2^{1,g} \in \mathrm{var}^G(A_i)$.
Similarly, $m_{((2)_g)} = 1$ implies $C_3^g \in \mathrm{var}^G(A_i)$, while $m_{((1,1)_g)} = 1$ implies $\mathcal{G}_2^{g,g} \in \mathrm{var}^G(A_i)$.
Finally, for distinct $g,h \in G - \{1\}$, if $m_{((1)_g,(1)_h)} = 1$, then $R\in \textnormal{var}^G(A_i)$ for some $R \in \{K_7^{h,g}, K_7^{g,h}, \mathcal{G}_2^{g,h}, W_\alpha^{g,h} \mid \alpha \in F - \{0,-1\}\}$.
In the case where $m_{((1)_g,(1)_h)} = 2$, we have $K_7^{g,h} \oplus \mathcal{G}_2^{g,h} \in \mathrm{var}^G(A_i)$. 

Moreover, since $A_i$ has quadratic growth, at least one of the multiplicities $m_{\lambda}$ must be nonzero, where
\[
\lambda \in \{(1,1)_1,\ ((1)_1,(1)_g),\ ((2)_g),\ ((1,1)_g),\ ((1)_g,(1)_h) \mid g,h \in G - \{1\},\ g \neq h\}.
\]

Note that for each nonzero multiplicity, there is a corresponding $G$-graded algebra in the variety that contributes to this value. Let $B$ denote the $G$-graded algebra obtained as the direct sum of these algebras, selected according to the nonzero multiplicities of $A_i$. 
By Lemmas~\ref{c2g}--\ref{ultimo}, it follows that $B \in \mathrm{var}^G(A_i)$.

Furthermore, by Remark~\ref{multibounded} and Table~\ref{table:2}, the algebras $A_i$ and $B$ share the same multiplicities in the decomposition of all proper $(n_1, \ldots, n_k)$-cocharacters. 
Hence, by Remark~\ref{remark}, we conclude that
\[
\mathrm{var}^G(A_i) = \mathrm{var}^G(B).
\]

We now observe that, if $A_t$ has at most linear codimension growth, for some $1\leq t\leq m$, then by \cite[Theorem~5.5]{Plamen}, $A_t$ is $T_G$-equivalent to a finite direct sum of $G$-graded algebras belonging to the set $\{F,C_2^g\mid  g \neq 1\}$.

Finally, since $A \sim_{T_G} A_1 \oplus \cdots \oplus A_m$ and at least one of the components $A_i$ has quadratic codimension growth, the result follows.\end{proof}

The previous theorem shows that unitary $G$-graded varieties with quadratic growth are generated by a direct sum of algebras that generate minimal varieties. Recall that in \cite{Plamen}, the authors proved that varieties with at most linear codimension growth are also generated by the direct sum of algebras generating minimal varieties. This observation, combined with Theorem~\ref{quadraticgrowth}, leads to the following characterization.

\begin{corollary}
    Let $A$ be a unitary $G$-graded algebra. Then $c_n^G(A)\leq \alpha n^2 $ if and only if $A$ is $T_G$-equivalent to a finite direct sum of algebras generating minimal varieties with at most quadratic codimension growth.  
\end{corollary}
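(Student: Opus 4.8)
The plan is to prove the two implications separately. For the forward direction I would reduce to Theorem~\ref{quadraticgrowth} together with the linear-growth classification of \cite{Plamen}, after first pinning down the exact degree of growth; for the converse I would use the standard additivity of codimensions under direct sums.

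For the forward implication I would start from the hypothesis $c_n^G(A)\leq \alpha n^2$ and first determine the growth degree. By the dichotomy recalled after Theorem~\ref{35} and the polynomial description of Theorem~\ref{codproper}, the sequence $c_n^G(A)$ is asymptotic to $q\,n^t$ for some integer $t$; since it is bounded above by $\alpha n^2$, necessarily $t\in\{0,1,2\}$. I would then split into two cases. If $t=2$, I invoke Theorem~\ref{quadraticgrowth} to write $A\sim_{T_G} A_1\oplus\cdots\oplus A_m$ with each summand among $\{F,\,C_2^q,\,C_3^q,\,K_7^{p,q},\,\mathcal{G}_2^{g,h},\,W_\alpha^{u,v}\}$, each of which generates a minimal $G$-graded variety of at most quadratic growth by the minimal-variety classification stated before Table~\ref{table:2}. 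If instead $t\leq 1$, I appeal to \cite[Theorem~5.5]{Plamen}, which gives that $A$ is $T_G$-equivalent to a finite direct sum of copies of $F$ and algebras $C_2^g$, again minimal varieties of at most quadratic (indeed at most linear) growth. In either case $A$ decomposes as required.

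For the converse I would assume $A\sim_{T_G}B_1\oplus\cdots\oplus B_r$ with each $B_i$ generating a minimal variety of at most quadratic growth, so $c_n^G(B_i)\leq \alpha_i n^2$ for suitable constants $\alpha_i$. Using $\textnormal{Id}^G(B_1\oplus\cdots\oplus B_r)=\bigcap_i \textnormal{Id}^G(B_i)$, the natural map from $P_n^G/(P_n^G\cap\textnormal{Id}^G(A))$ into $\bigoplus_i P_n^G/(P_n^G\cap\textnormal{Id}^G(B_i))$ is injective, whence $c_n^G(A)\leq\sum_i c_n^G(B_i)\leq\big(\sum_i\alpha_i\big)n^2$; taking $\alpha=\sum_i\alpha_i$ completes this direction.

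The main obstacle I anticipate is not computational but structural: I must be sure that the hypothesis $c_n^G(A)\leq \alpha n^2$ partitions cleanly into the genuinely quadratic case and the at most linear case, with nothing in between. This is exactly what the polynomial-versus-exponential dichotomy together with the integer-degree statement of Theorem~\ref{codproper} guarantees, so once $t$ is identified the two cited classifications deliver the decomposition with no further work. A secondary point to verify is that $F$ (constant growth) and $C_2^g$ (linear growth) are legitimately counted as generating minimal varieties of at most quadratic growth, which they are.
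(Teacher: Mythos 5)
Your proposal is correct and follows essentially the same route as the paper, which derives the corollary by combining Theorem~\ref{quadraticgrowth} (the exactly quadratic case) with the linear-growth classification of \cite{Plamen}, exactly your case split on the degree $t$. The only difference is that you explicitly write out the converse direction via $\textnormal{Id}^G(B_1\oplus\cdots\oplus B_r)=\bigcap_i \textnormal{Id}^G(B_i)$ and the resulting bound $c_n^G(A)\leq\sum_i c_n^G(B_i)$, a standard argument the paper leaves implicit.
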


\vspace{0.5cm}
 \textbf{Acknowledgments}
\vspace{0.5cm}

The author is grateful to Prof. Ana Vieira for her constant support and valuable advice during the preparation of this paper, as well as to Filiphe Veiga for his valuable support throughout this work.

\end{document}